\theoremstyle{plain}
 \newtheorem{thm}{Theorem}[section]
 \newtheorem{cor}[thm]{Corollary}
 \newtheorem{lem}[thm]{Lemma}
 \newtheorem{prop}[thm]{Proposition}
\theoremstyle{definition}
	\newtheorem{defn}[thm]{Definition}
	\newtheorem{ex}[thm]{Example}
\theoremstyle{remark}
    \newtheorem{rem}[thm]{Remark}
	\newtheorem{note}[thm]{Notation}
\numberwithin{equation}{section}	
\newcommand{\N}{\mathbb{N}}
\newcommand{\R}{\mathbb{R}}
\newcommand{\C}{\mathbb{C}}
\newcommand{\Z}{\mathbb{Z}}
\newcommand{\Sym}[1]{\mathrm S({#1})}
\newcommand{\SSS}{\mathbb{S}}
\newcommand{\E}{\mathcal{E}}
\newcommand{\K}{K}   
\newcommand{\Sp}{\mathrm{Sp}} 
\renewcommand{\L}{L}
\newcommand{\pot}{U}
\newcommand{\pothom}{\pot}
\newcommand{\Lagr}{\Lambda}
\newcommand{\X}{\mathcal{X}}
\newcommand{\Xhat}{\widehat{\X}}
\newcommand{\IAS}{\mathcal{I}}
\newcommand{\Ilim}{\mathcal{I}^*}
\newcommand{\qAS}{\mathcal Q}	
\newcommand{\qlim}{\mathcal{Q}^*}
\newcommand{\Id}{I}
\newcommand{\eps}{\varepsilon}
\newcommand{\BS}{$\mathrm{[BS]}$}
\newcommand{\norm}[1]{\left\| #1 \right\|}  
\newcommand{\Mprod}[2]{ \left\langle {#1},{#2} \right\rangle_M}
\newcommand{\Mnorm}[1]{\abs{#1}_M} 
\newcommand{\abs}[1]{\lvert #1 \rvert}
\newcommand{\trasp}[1]{{#1}^\mathsf{T}}
\newcommand{\iCLM}{\mu}
\newcommand{\iMor}{n_-}
\newcommand{\coiMor}{n_+}
\newcommand{\irel}{I}
\newcommand{\ispec}{\iota_{\textup{spec}}}
\newcommand{\igeo}{\iota_{\textup{geo}}}
\DeclareMathOperator{\diag}{diag}		
\DeclareMathOperator{\spfl}{sf}			
\DeclareMathOperator{\sgn}{sgn}		
\renewcommand{\leq}{\leqslant}
\renewcommand{\geq}{\geqslant}
\renewcommand{\hat}{\widehat}
\renewcommand{\=}{\coloneqq}			
\newcommand{\otimesm}{\otimes_M}
\newcommand{\ie}{i.e.~}
\title[An Index theory for asymptotic motions under singular potentials]{An 
Index theory for asymptotic motions \\ under \\ singular potentials}
\author{Vivina L. Barutello, Xijun Hu, Alessandro Portaluri, Susanna 
Terracini}
\date{\today}
\thanks{The second author was partially supported by NSFC (No.11425105 and  No. 11790271).
The first, third and fourth 
author were partially supported by the project ERC Advanced Grant 2013 
n.~339958 ``Complex Patterns for Strongly Interacting Dynamical Systems'' --- 
COMPAT}
\subjclass{70F10, 70F15, 70F16, 37B30, 58J30, 53D12, 
70G75}
\keywords{Index theory, Maslov index, Spectral flow, Colliding trajectories, 
Parabolic motions,  Homothetic orbits}
\date{\today}
\begin{document}

\begin{abstract}
We develop an index theory for parabolic and collision solutions to the 
classical $n$-body problem and we prove sufficient conditions 
for the finiteness of the spectral index valid in a large class of trajectories ending with 
a total collapse or expanding with vanishing limiting velocities. Both problems 
suffer from a lack of compactness and can be brought in a similar form of a 
Lagrangian System on the half time line by a regularising change of coordinates 
which preserve the Lagrangian structure.  We then introduce a  Maslov-type 
index which is suitable to capture the asymptotic nature of these 
trajectories as half-clinic orbits: by taking into account the underlying  
Hamiltonian structure we define the appropriate  notion of {\em geometric 
index\/} 
for this class of solutions  and we develop  the relative index theory. 
\end{abstract}
 \maketitle
\section{Introduction}\label{sec:intro}

Most dynamical systems of interacting particles  in Celestial and 
other areas of Classical  Mechanics,  are governed by singular forces. In 
this paper we are concerned with potential of $n$-body type having the form
\begin{equation}\label{eq:potential-intro}
\pothom(q)\=\sum_{\substack{i,j=1 \\i<j}}^n 
\dfrac{m_im_j}{\norm{q_i-q_j}^\alpha}, 
\end{equation}
where $\alpha \in (0,2)$. The positive real numbers $m_1,\dots,m_n$  can be thought as masses and the  
(real analytic) self-interaction {\em potential function\/} (the opposite to the  potential energy) is defined and smooth in 
$$
\Xhat=\{q=(q_1,\dots,q_n)\in\X\;:\;q_i\neq q_j\; \textrm{if}\; i\neq j\}
$$
where
$$
\X=\{q=(q_1,\dots,q_n)\in\R^{d\times n} \;:\; \sum_{i=1}^{n}m_iq_i=0\}
$$
is the space of admissible configurations with \emph{vanishing 
barycenter}. 
This class of potentials includes the case, for instance,  of the gravitational potential ($\alpha=1$) 
governing the motion of $n$ point masses interacting according to Newton's law of 
gravitation and usually referred to as the (classical) $n$-body problem. Besides 
the  homogeneity of some negative degree, a common feature of all these singular 
potentials is that their singularities are located on some stratified subset (in general 
an arrangement of subspaces or more generally submanifolds) of the full
configuration space.
Both properties play a fundamental role in the study of the dynamics of the 
system and strongly influence the global orbit structure being responsible, 
among others, of the presence of chaotic motions as well as of  motions 
becoming 
unbounded in a finite time. 

Associated with  the potential $\pothom$ we have Newton's Equations of motion 
\begin{equation}\label{eq:NewtonINTRO}
M\,\ddot q = \nabla \pothom(q)
\end{equation}
where $\nabla$ denotes the (Euclidean) gradient and
$M\=\diag{(m_1\Id_d,\dots,m_n\Id_d)}$ is the mass matrix. 
As the centre of mass has an inertial motion, it is not  restrictive to 
prescribe its position at the origin and the choice of $\Xhat$ as  
configuration 
space is justified. For $(q,v)\in T\Xhat$, the tangent bundle of $\Xhat$, the 
Lagrangian function $\L: T\Xhat \to [0,+\infty)\cup\{+\infty\}$ is given by 
\begin{equation}\label{eq:LagrangianaINTRO}
 \L(q,v)=\K(v)+ \pothom(q) = \frac12\Mnorm{v}^2+\pothom(q)\;,
\end{equation}
where $\Mnorm{\cdot}$ denotes the norm induced by the mass matrix $M$.

We deal with the following class of trajectories having a prescribed asymptotic behaviour.
\begin{defn}\label{def:1collisionsolution-intro}
Given $T \in (0,+\infty]$, a {\em $T$-asymptotic solution (or 
simply asymptotic solution or a.s. in shorthand notation) for the dynamical 
system \eqref{eq:NewtonINTRO}\/} is a function $\gamma\in \mathscr 
C^2((0,T),\Xhat)$ which pointwise solves \eqref{eq:NewtonINTRO} on $(0,T)$ 
and such that the following alternative holds: 
\begin{enumerate}
\item[(i)] if $T<+\infty$, then $\gamma \in \mathscr C^0([0,T],\X)$ and it experiences  a \emph{total collision} at the final instant $t=T$, namely  
\[
 \lim_{t \to T^-} \gamma(t)=0,\;  \textit{termed a time-$T$ total collision 
trajectory;}
\]
\item[(ii)] if $T=+\infty$, then $\gamma \in \mathscr C^0([0,+\infty),\X)$ and
\[
 \lim_{t \to +\infty} \K\big(\dot\gamma(t)\big)=0,\;   \textit{termed a completely parabolic trajectory}.
\]
\end{enumerate}
\end{defn}
Such class includes homothetic self-similar motions $\gamma(t)=|\gamma(t)|_M s_0$, where $s_0$ is a central configuration, i.e. a critical point of the potential $\pothom$ constrained on the ellipsoid $\mathcal{E} = \{q \in \Xhat : \Mnorm{q}=1\}$.
The study of these two kinds of motions has occupied a quite extensive research in the field and their strong connection already appeared in Devaney's works \cite{Dev78,Dev81} and it has been clearified recently in \cite {DaLM13} and \cite{BTV13}.

For a.s. some asymptotic estimates, at $t \to T$, are available (cf. \cite{Spe70,Sun13} when $T<+\infty$ and \cite{BTV14} when $T=+\infty$) and one can prove that total collision and completely parabolic trajectories share the same behaviour: the radial component of the motion is, in both cases, asymptotic to the power $\cdot^{\frac{2}{2+\alpha}}$ (infinite or infinitesimal), while its configuration approaches the set of central configurations of the potential at its limiting level (for the details we refer to Lemma \ref{thm:SundmanSperling}).

We shall be concerned with asymptotic solutions having a precise limiting normalized central 
configuration $s_0$, terming these trajectories {\em $s_0$-asymptotic solutions}.
The aim of the present paper is to relate the Morse/Maslov index of the trajectory with its 
asymptotic properties. This will be the starting point for our long term project of building a Morse  Theory, tailored for singular Hamiltonian Systems, which takes into account the 
contribution  of the flow on the collision manifold, defined by McGehee in 
\cite{MG74} by blowing up the singularity. A major problem in using directly 
McGehee regularizing coordinates is that the resulting flow looses its 
Hamiltonian as well as its Lagrangian character. We shall thus use a variant which keeps both the 
Hamiltonian and Lagrangian structure, while both types of asymptotic orbits, total collision and parabolic, will transform into half-clinic trajectories asymptotic to the collision 
manifold.  By taking into account the underlying  Hamiltonian structure we will introduce appropriate 
notions of {\em spectral index\/} and {\em geometric index\/} for this class of half-clinic orbits and we will relate them through an Index Theorem. 
These two topological invariants  respectively encode, in the index formula, 
the {\em spectral terms} which are  computationally  inaccessible, and the  
{\em geometric terms\/} 
containing analytic  information of a fundamental nature  which are quite explicit and involve the spectrum of a finite dimensional 
operator.

To the authors knowledge, there are very few results in literature that investigate the contribution of collisions to the Morse index of a variational solution when the potential is homogeneous and  weakly singular. In a very recent paper G. Yu (inspired by some paper by Tanaka, we cite for all \cite{TanAIHP93}) proved that the Morse index of a solution gives an upper bound on the number of binary collision of the $n$-body problem (cf. \cite{Yupreprint}). The only partial results that takes into account collisions involving more then two bodies are contained in the pioneering papers \cite{BS08, HO16}, in which the authors, using the precise asymptotic estimates near collisions, prove that  the {\em collision  
Morse index\/} is infinite whenever the smallest eigenvalue of $ 
D^2\pot|_\mathcal E(s_0) $ is less than 
a threshold depending on the homogeneity $\alpha$ and $s_0$ itself. 

In the present paper the following spectral condition, naturally  associated with the $s_0$-asymptotic solution, plays a central role
\begin{equation}\label{eq:BS}
\text{the smallest eigenvalue of } D^2\pot|_\mathcal E(s_0) \text{ is }> 
 -\dfrac{(2-\alpha)^2}8{\pot(s_0)}.
\end{equation}
We will refer to this condition as the \textit{\BS-condition}. An analogue spectral condition has been recently used in \cite{Hua11} to prove non-minimality of a class of collision motions in the planar Newtonian three body problem, expressed using Moser coordinates.  It is worthwhile noticing that the \BS-condition has an important dynamical interpretation, marking the threshold for  hyperbolicity of $s_0$ as 
a  rest point on the flow restricted to the collision manifold.

In one of the main result of the present paper, Theorem \ref{prop:key1-una freccia}, we prove 
that when the central configuration $s_0$ satisfies the \BS-condition then the Morse Index of any $s_0$-a.s. is finite.
A direct consequence of this theorem is 
that the condition given by authors in \cite{BS08} is sharp for the infiniteness of the (collision) 
Morse index.
Actually the infiniteness of the Morse index is paradigmatic of a more general situation of boundary value problems for systems of ordinary differential 
equations on the half-line  (cf. \cite{RS05a, RS05b}). In fact, we shall prove 
in Theorem \ref{thm:key1fredholm} that the \BS-condition gives the threshold for the 
linearized operator belonging to the Fredholm class. 
We observe that in the particular case in which the smallest eigenvalue 
of $ D^2\pot|_\mathcal  E(s_0)$ is equal to 
$-\frac{(2-\alpha)^2}{8}\pot(s_0)$, then the Morse index could be finite even if the 
associated index form is not Fredholm.

The class of Fredholm operators has been the 
natural environment, since their debut, for index theories, whose principal protagonist is 
represented by the {\em spectral flow\/} for 
paths of selfadjoint Fredholm operators. This celebrated topological invariant was introduced by Atiyah, Patodi and Singer in the seminal paper \cite{APS76} and since then it has reappeared in connection with several other phenomena like odd Chern characters, gauge anomalies, Floer homology, the 
distribution of the eigenvalues of the Dirac operators. In  the 
finite-dimensional context the spectral flow probably dates even back to Morse 
and his index theorem while, in the one-dimensional case, it traces back to Sturm.  There are 
several different definitions of the spectral flow that appeared in the literature during the last decades. 

The last section of the paper is devoted to prove a Morse type Index Theorem for $s_0$-asymptotic solutions. This results states that, when \BS-condition is satisfied, then the \emph{spectral (or Morse) index} and the \emph{geometric (or Maslov) index} of an $s_0$-a-s. coincide. The equality between these two topological invariants allows us to mirror the problem of computing the Morse index of a $s_0$-a.s. (integer associated to an unbounded Fredholm operator
in an {\em infinite dimensional\/} separable Hilbert space) into an intersection index between a curve of Lagrangian subspaces and a {\em finite dimensional\/} transversally oriented variety. 
The key of this result relies on the fact that the spectral index can be 
related to the spectral flow of a path of Fredholm quadric forms. The interest for this spectral flow formula is twofold. On the one hand, it could be useful for the computation of the Morse index since the computation is confined on a finite dimensional objects; on the other hand, from a theoretical point of view, it could be interesting to prove a Sturm oscillation theorem for singular systems and to relate the geometrical index with the Weyl classification of the singular boundary condition (cf. \cite{Zet05} and references therein). To the authors knowledge, very few spectral flow formulas have been (recently) proved in the case of Lagrangian and Hamiltonian systems defined on an unbounded interval and in particular  on half-line. (We refer the interested reader to \cite{CH07},\cite{HP17} and references therein). For this reason, we think also that, the interests of the spectral flow formula proved in Theorem \ref{thm:indextheorem} goes even beyond the framework of the present paper. 

We finally remark that in the case of collisionless periodic solutions, in the  
last few years the Morse Index of some special equivariant orbits has been 
computed via Index Theory: in \cite{HS10} and \cite{HLS14} the authors studied 
the Morse Index of the family of elliptic Lagrangian solutions of the planar 
classical three body problem, while in \cite{BJP16, BJP14} the authors restrict to the 
circular Lagrangian motions, taking into account different homogeneity 
parameter  of the potential. 
The first step in the rigorous analysis of the relation between the symmetry 
of  the orbit and its Morse Index (still via Index Theory) have been performed by 
Hu and  Sun in \cite{HS09} in which the authors computed the Morse Index and 
studied the stability of the figure-eight orbit for the planar 3-body problem 
by means of symplectic techniques.

We conclude the introduction,  by observing that we have only scratched the surface of the 
implication of the Maslov index for halfclinic trajectories in Celestial Mechanics or 
more generally in singular Hamiltonian systems. It is reasonable to 
conjecture that the index theory  constructed in this  paper for $s_0$-a.s. could be carried 
over for more general class of asymptotic motions arising in a natural way in    
Celestial Mechanics; more precisely, motions in  which the limit of the normalized central configuration does  not hold, but the linearized system along the motion experiences an exponential  dichotomy as well as for orbits experiencing only 
partial collisions. Both these problems are promising research directions that should be 
investigated in order to obtain a deeper understanding of the intricate dynamics governed by singular potentials.
\vskip1truecm

For the sake of the reader, in the following list we collected some mathematical symbols and constants that will appeared in the paper. 
\begin{itemize}
	\item $\alpha \in (0,2)$ is the homogeneity of the potential $\pot$
	\item $M$ is the mass matrix
	\item $\mathcal E$ is the \textrm{ inertia ellipsoid } with respect to the mass metric
	\item  $s_0$ is  a \textrm{ normalized central configuration}
    \item $\beta:=\dfrac{2(2+\alpha)}{2-\alpha} >2$
    \item $c_\alpha:=\left(\dfrac{4}{2-\alpha}\right)^2-1$
    \item $\overline \delta_\alpha:=
    \begin{cases} 
    	\small{-\dfrac{(2-\alpha)}{4} \sqrt{2\pot(s_0)}}, & \text{if } T<+\infty \\
    	\small{ \dfrac{(2-\alpha)}{4} \sqrt{2\pot(s_0)}}, & \text{if }  T=+\infty
    \end{cases}$
    \item $c_\alpha \overline \delta_\alpha^2 = 2 \pot(s_0)-
    \dfrac{(2-\alpha)^2}{8} \pot(s_0)$ 
    \item $\widetilde \delta_\alpha:=\overline \delta_\alpha (\beta-2)= 
    -\alpha \sqrt{2\pot(s_0)}$
\end{itemize}

\vspace{.5cm}

The paper is organised as follows: 
\tableofcontents

\section{Variational setting and regularised action functional} 
\label{sec:variational_setting}

In this section  we settle the variational framework of the problem. In 
Subsection \ref{subsec:time1cs} we introduce  the main definitions and the basic 
notation which we need in the sequel as well as we define  a suitable class of 
{\em asymptotic motions\/} both collision and parabolic. Subsection \ref{subsec:var} is devoted to fix the variational setting. In Subsections 
\ref{subsec:McGehee} and \ref{subsec:firstsecondvariation}, by means of the {\em  Lagrangian version of McGehee's 
transformation\/}, we compute the second variation of the corresponding 
Lagrangian action functional in these new coordinates. 

\subsection{Asymptotic motions: total collisions and parabolic 
trajectories}\label{subsec:time1cs}

For any integer $n \geq 2$, let $m_1, \dots, m_n$ be $n$ positive real numbers 
(that can be thought as the masses of  $n$ point particles) and let $M$ be the diagonal block matrix defined  as $M\=[M_{ij}]_{i,j=1}^n$ with 
$M_{ij}\=m_j\delta_{ij}\Id_d$, where $\Id_d$ is the $d$-dimensional identity matrix and $d \geq 2$. Being $\langle \cdot, \cdot \rangle$ 
the Euclidean scalar product in $\R^{nd}$, we indicate with 
\begin{equation}\label{eq:massnorm}
\Mprod{\cdot}{\cdot} = \langle M\cdot ,\cdot \rangle 
\qquad \text{and}  \qquad
\Mnorm{\cdot} =  \langle M\cdot,\cdot \rangle^{1/2}
\end{equation}
respectively the Riemannian metric and the norm induced by the mass matrix. For 
brevity we shall refer to them respectively as the {\em mass scalar product\/} 
and the {\em mass norm\/}.

Let $\X$ denotes the {\em configuration space\/} of the $n$ point particles 
with 
masses $m_1, \dots, m_n$ and centre of mass in $0$
\begin{equation}\label{eq:configuration-space}
\X \= \Set{(q_1, \dots, q_n) \in \R^{nd}| \sum_{i=1}^n m_i\, q_i=0}.
\end{equation}
Thus $\X$ is a $N$-dimensional (real) vector space, where $N\=d(n-1)$. For each 
pair of indices $i,j \in \{1,\ldots,n\}$ let $\Delta_{i,j}=\Set{x \in \X| 
q_i=q_j}$ be the {\em collision set of the $i$-th and $j$-th particles\/}
and let 
\begin{equation}\label{eq:singular-set}
\Delta\= \bigcup_{\substack{i,j=1\\i \neq j}}^n \Delta_{i,j}
\end{equation}
be the {\em collision set in  $\X$\/}. It turns out that $\Delta$ is a {\em cone\/} whose vertex is the point $0 \in \Delta$; it corresponds to the \emph{total collision} or to the {\em total collapse\/} of the system (being  the centre of mass  fixed at $0$). The space of \emph{collision free configurations} is denoted by
\[
\Xhat := \X \setminus \Delta.
\]

For any real number $\alpha \in (0,2)$ and any pair $i,j \in \{1,\ldots,n\}$, 
we 
consider the smooth  function $\pot_{ij}:\R^d\setminus (0) \to [0,+\infty)$ given by 
\[
 \pot_{ij}(z)\=\dfrac{m_im_j}{\norm{z}^\alpha}.
\]
We define the (real analytic) self-interaction {\em potential function\/} on 
$\Xhat$ (the opposite of the potential energy),  $\pothom: \Xhat\to [0,+\infty)$ as 
follows \begin{equation*}\label{eq:potential}
\pothom(q)\=\sum_{\substack{i,j=1 \\i<j}}^n \pot_{ij}(q_i-q_j).
\end{equation*}
Newton's Equations  of motion associated to $\pothom$ are 
\begin{equation}\label{eq:Newton}
M\,\ddot q = \nabla \pothom(q)
\end{equation}
where $\nabla$ denotes the gradient with respect to the Euclidean  metric. 
It turns out that, since the centre of mass has an inertial motion, it is not 
restrictive to fix it at the origin and the choice of $\X$ as  
configuration space is justified. Denoting by $T\X$ the tangent bundle of $\X$, 
whose elements are denoted by $(q,v)$ with $q \in \X$ and $v$  a tangent vector 
at $q$,  the Lagrangian function $\L: T\X \to [0,+\infty)\cup\{+\infty\}$ of the 
system is
\begin{equation}\label{eq:Lagrangiana}
 \L(q,v)=\K(v)+ \pothom(q) 
\end{equation}
where the first term is the {\em kinetic energy\/} $\K(v)\= \frac12\Mnorm{v}^2$ 
of the system.

We will deal with $T$-asymptotic solutions as introduced in Definition \ref{def:1collisionsolution-intro}. 
Concerning the definition of completely parabolic motions let us recall that, by using the concavity of the second derivative of the moment of inertia, one can prove that completely parabolic motions have necessarely zero energy (cf. \cite[Definition 1 and Lemma 2]{Che98}). For this reason, from now on we will name this kind of asymptotic motions simply \emph{parabolic} trajectories.

The behaviour of asymptotic motions as $t$ approaches to $T$ is well-known (cf. 
\cite{Sun13,Spe70, BFT08, BTV14} and references therein) and in order to 
describe it, we perform a polar change of coordinates in the configuration 
space 
with respect to the mass scalar product. Let $r$ and $s$ be respectively the {\em  radial\/} and {\em angular\/}  variables associated to a configuration $q \in \X$: 
\begin{equation}
\label{eq:rs}
r \= \Mnorm{q} \in [0,+\infty), \qquad s \= \frac{q}{r} \in  \E 
\end{equation}
where 
$$
\E  = \{q \in \Xhat : \Mnorm{q}=1\}
$$
is the {\em inertia ellipsoid\/}, namely the unitary sphere in the 
mass scalar product. 
\begin{rem}\label{rem:no_coll}
In this remark we aim to motivate the fact that the assumption in Definition 
\ref{def:1collisionsolution-intro}
\begin{equation} \label{eq:cond_def}
\gamma(t) \in \Xhat, \;\forall t \in (0,T),
\end{equation}
is not too restrictive.
Let us start considering a time-$T$ total collision trajectories; since the centre of mass of the system has been fixed at the 
origin, assumption given in Formula \eqref{eq:cond_def} implies the following 
condition on the 
radial component,
$r(t) = |\gamma(t)|_M$,
\[
\lim_{t\to T^-} r(t)=0 \;\text{and}\; r(t) \neq 0, \quad  \forall\, t \in [0,T).
\]
This fact is indeed a consequence of the Lagrange-Jacobi inequality (which actually shows 
the convexity of $|r(t)|_M^2$ in a neighbourhood of the total collision). 
Furthermore, the authors in \cite{BFT08} prove that a total collision is indeed isolated \emph{among  collisions} (not only total, but also partial).  For these reasons the class of total collision motions we choose is a natural set of total collision motions. 

When $\gamma$ is a (completely) parabolic motion, since the kinetic energy goes 
to 0 and the total energy is conserved then, as $T \to +\infty$, each
mutual distance between pairs of bodies is bounded away from 0. Being $\Xhat$ 
an 
open set, also in this case assumption \eqref{eq:cond_def} is not restrictive.
\end{rem}
The asymptotic behaviour both of total collision and parabolic motions is 
described by the forthcoming Lemma \ref{thm:SundmanSperling}. For its proof  we refer the interested reader respectively to 
\cite[Theorem 4.18]{BFT08} and \cite[Theorem 
7.7]{BTV14}. 
We recall the following definition.
\begin{defn}\label{def:cc}
A configuration $s_0 \in \Xhat$ is a \emph{central configuration} for the 
potential $\pothom$ if it is a critical point for $\pothom|_{\mathcal{E}}$. Any central configuration 
$s_0$ for $\pothom$, homogeneous of degree $-\alpha$, satisfies the central 
configuration equation
\begin{equation}\label{eq:cc_eq}
\nabla \pothom(s_0) = -\alpha \pothom(s_0)Ms_0.
\end{equation}
\end{defn}
\begin{lem}{\bf[Sundman-Sperling, \cite{Spe70,Sun13} and, for the case $T=+\infty$, \cite{BTV14}]\/}\\ \label{thm:SundmanSperling}
Let us assume that $\gamma$ is an asymptotic solution for the dynamical system 
\eqref{eq:Newton} and define 
\[
K\= \dfrac{\alpha+2}{\alpha}\sqrt{2\,b}
\quad \text{and} \quad
\beta(t)\=
\begin{cases}
T-t, & \textrm{ if }\  T<+\infty\\
t,   & \textrm{ if }\,  T=+\infty.
\end{cases}
\]
Then there exists $b>0$ such that the following estimates 
hold:
\begin{enumerate}
\item[(i)] $\displaystyle \lim_{t \to T^-} r(t)[K\beta(t)]^{-\frac{2}{2+\alpha}} = 1 $ \quad and  
\[
\lim_{t \to T^-} \dot r(t)[K\beta(t)]^{\alpha/(2+\alpha)} = 
\begin{cases}
- \sqrt{2\,b}, & \textrm{ if }\  T<+\infty\\
\sqrt{2\,b},   & \textrm{ if }\,  T=+\infty.
\end{cases}
\] 
\item[(ii)]  
\(
\displaystyle
\lim_{t \to T^-} \pothom(s(t)) = b.
\)
\item[(iii)] 
\[
\lim_{t \to T^-} \Mnorm{\dot s(t)}\, \beta(t)= 0\quad \textrm{ and  } \quad 
\lim_{t \to T^-} \mathrm{dist}(\mathcal C^b, s(\tau))=0 
\]
where $\mathcal C^b$ is the set of central configurations for $\pothom$ at 
level 
$b$.
\end{enumerate}
\end{lem}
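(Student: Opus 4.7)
The plan is to combine two classical first integrals---energy conservation and the Lagrange--Jacobi identity for the moment of inertia $I(t):=r(t)^2=\Mnorm{\gamma(t)}^2$---together with the homogeneity of $\pot$. In polar coordinates $q=rs$ with $\Mnorm{s}=1$ and $\Mprod{s}{\dot s}=0$, the kinetic energy splits as $2\K(\dot q) = \dot r^2 + r^2\Mnorm{\dot s}^2$ and homogeneity gives $\pot(rs) = r^{-\alpha}\pot(s)$. Conservation of energy (with $h=0$ in the parabolic case, as recalled immediately before the lemma) together with Euler's identity $\langle \nabla\pot(q),q\rangle=-\alpha\pot(q)$ yield
\begin{equation}
\dot r^2 + r^2\Mnorm{\dot s}^2 = 2r^{-\alpha}\pot(s) + 2h,\qquad \ddot I = 2(2-\alpha)\pot(\gamma) + 4h.
\end{equation}
Since $\alpha\in(0,2)$ and $\pot>0$, the Lagrange--Jacobi inequality $\ddot I>0$ holds in a left neighbourhood of $T$, so $I$ is strictly convex, $\dot I$ is monotone, and consequently $r(t)\to 0$ in the collision regime and $r(t)\to +\infty$ in the parabolic one.

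The heart of the proof is to show that the angular kinetic term is asymptotically negligible. Multiplying the energy identity by $r^\alpha$ gives
\begin{equation}
r^\alpha \dot r^2 + r^{\alpha+2}\Mnorm{\dot s}^2 = 2\pot(s) + 2hr^\alpha,
\end{equation}
whose last summand vanishes at $T$ in both regimes ($r^\alpha\to 0$ for collisions and $h=0$ for parabolic motions). The classical Sundman--Sperling argument then combines the monotonicity of $\dot I$ with the Lagrange identity $4I\Mnorm{\dot q}^2 - \dot I^2 = 4r^4\Mnorm{\dot s}^2$ to produce integrable bounds that force $r^{\alpha+2}\Mnorm{\dot s}^2\to 0$. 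Once this is established, the energy identity reduces to the autonomous radial ODE $\dot r^2 \sim 2\pot(s)\, r^{-\alpha}$; a further Tauberian argument (again exploiting the monotonicity of $\dot I$) shows that $\pot(s(t))$ converges to some $b>0$, which is item (ii). Integrating $\dot r \asymp \pm\sqrt{2b}\,r^{-\alpha/2}$ between $t$ and $T$ produces $r(t)\asymp [K\beta(t)]^{2/(2+\alpha)}$ with the constant $K=\tfrac{\alpha+2}{\alpha}\sqrt{2b}$ arising naturally from the antiderivative of $r^{\alpha/2}$; differentiation yields the companion asymptotics for $\dot r$, with the sign determined by whether $\dot I<0$ or $\dot I>0$ near $T$. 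This establishes item (i).

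For item (iii), the first assertion $\Mnorm{\dot s(t)}\beta(t)\to 0$ follows by combining $r^{\alpha+2}\Mnorm{\dot s}^2\to 0$ with the already proved rate $r\asymp\beta(t)^{2/(2+\alpha)}$, so that $\beta(t)^2\Mnorm{\dot s}^2\to 0$. The convergence $\mathrm{dist}(\mathcal C^b,s(t))\to 0$ is the most delicate step and the principal obstacle in the proof. The strategy is a McGehee-type time rescaling $d\tau = r^{-(\alpha+2)/2}dt$: under it, the reduced equation for $s(\tau)$ on $\E$ becomes autonomous of gradient-plus-damping type, with $\pot|_\E$ acting as a Lyapunov function. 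The decay of the angular speed then forces every $\omega$-limit point of $s$ to be a critical point of $\pot|_\E$ at level $b$, i.e.\ a point of $\mathcal C^b$. Real-analyticity of $\pot$, invoked via a \L ojasiewicz-type inequality, is precisely what ensures that the entire $\omega$-limit set---which \emph{a priori} could be a continuum of critical values---collapses onto the finite-dimensional variety $\mathcal C^b$; this is where the references \cite{BFT08} (for $T<+\infty$) and \cite{BTV14} (for $T=+\infty$) do the heavy lifting.
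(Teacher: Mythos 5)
The paper itself does not prove this lemma: it is quoted as a classical result with the proof delegated to \cite[Theorem 4.18]{BFT08} (for $T<+\infty$) and \cite[Theorem 7.7]{BTV14} (for $T=+\infty$). Your sketch follows the standard Sundman--Sperling skeleton faithfully, and none of its main steps is misdirected at the level of outline: Lagrange--Jacobi gives convexity of $r^2$ and monotonicity of its derivative; energy conservation and $-\alpha$-homogeneity isolate a reduced radial ODE; a Tauberian argument controls the angular kinetic term and produces the limiting value $b$; integrating $\dot r \asymp \pm\sqrt{2b}\,r^{-\alpha/2}$ yields the power law and the constant $K$; and a McGehee-type rescaling handles the angular part.

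There is, however, one conceptual overreach in your treatment of item (iii). You call on a \L ojasiewicz inequality to ensure that ``the $\omega$-limit set collapses onto $\mathcal C^b$,'' but that is not what \L ojasiewicz does, and it is not needed for the statement as written. Once item (ii) gives $\pot(s(\tau))\to b$ and the angular speed decays (the first half of (iii)), a LaSalle-type invariance argument for the asymptotically autonomous reduced flow already forces every $\omega$-limit point of $s$ to be a rest point, hence a critical point of $\pot|_\E$ at level $b$; compactness of $\E$ then gives $\mathrm{dist}(\mathcal C^b, s(\tau))\to 0$. The \L ojasiewicz gradient inequality, powered by real analyticity, is the tool for the \emph{stronger} conclusion that $s(\tau)$ converges to a \emph{single} central configuration, i.e.\ for excluding infinite spin --- which is precisely what this lemma does \emph{not} claim. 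The discussion immediately following the lemma in the paper makes this explicit: assertion (iii) does not assert the existence of a limit, and the infinite spin problem is open. Your proof thus reaches for a tool stronger than what the claim requires, while leaving unexplained why the weaker LaSalle step already suffices for what is actually asserted.
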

The previous lemma motivates the choice to	gather in only one definition 
total collision motions and parabolic ones: their asymptotic behaviour is  really 
similar when $t$ approaches to $T$. In particular let us pause on assertion (iii) 
of Lemma \ref{thm:SundmanSperling}: it states that the angular part of these 
trajectories tends to a set $\mathcal{C}^b$, but it does not necessarily admit a 
limit. The absence (or presence) of spin in the angular part of an asymptotic 
motion is indeed still an open problem in Celestial Mechanics, known as the 
{\em infinite spin problem} (cf. \cite{SH81,Saa84}). 
Our studies will focus on a.s. admitting a limiting central configuration: 
we are interested in collision/parabolic 
trajectories whose angular part tends exactly to an assigned central configuration $s_0$.
\begin{defn}\label{def:s_0-as}
Given a central configuration $s_0$ for the potential $\pot$ and an a.s. 
$\gamma$, we say that  $\gamma$ is an \emph{$s_0$-asymptotic solution} 
(\emph{$s_0$-a.s.} for short) if 
\[
\lim_{t \to T^-}\frac{\gamma(t)}{|\gamma(t)|_M} = s_0. 
\]
\end{defn}
\begin{rem}[\emph{on $s_0$-homothetic motions}]\label{rem:hom}
	Given a central configurations for $\pothom$, $s_0$, the set of $s_0$-a.s. is not empty. It indeed contains the {\em $s_0$-homothetic motions}: these self-similar trajectories has the form $\gamma_0(t)=r_0(t)s_0$ where $r_0(t)$ solves the 1-dimensional Kepler problem. If the constant energy, $h$, is negative these solutions are bounded, ending in a collision as $t \to T<+\infty$ (actually these motions can be extended to $(-T,T)$). When $h \geq 0$ they are unbounded motions starting or ending with a total collision. {When $h=0$ we find an \emph{$s_0$-homothetic parabolic motion}, $\gamma^*(t)$: the radial part of these trajectories can be computed explicitely and there holds
	\[
	\gamma^*(t) = [K\beta(t)]^{\frac{2}{2+\alpha}}s_0,
	\]
	where the constant $K$ and the function $\beta$ have been introduced in Lemma \ref{thm:SundmanSperling}.}
	When $\beta(t) =t$ an $s_0$-homothetic parabolic motion $\gamma^*$ borns in a collision and becomes unbounded as $t\to T=+\infty$; when $\beta(t) = (T-t)$ the trajectory ends in a total collision and can be extended to the maximal definition interval $(-\infty,T)$, becoming unbonded as $t \to -\infty$.
	Hence Lemma \ref{thm:SundmanSperling} states that $s_0$-asymptotic solutions are indeed, as $t \to T$, perturbations of $s_0$-homothetic parabolic motions.
\end{rem}	
\begin{rem}
	Whenever $s_0$ is a minimal configuration, existence of $s_0$-asymptotic arcs of minimal  parabolic trajectories departing from arbitrary configurations has been proved in \cite{MV09}. The set of $s_0$-a.s. also contains this kind of trajectories.
	\end{rem}
%
\subsection{The variational framework}\label{subsec:var}
%
{\bf Collision trajectories.} Let us now take $T \in (0,+\infty)$; it is indeed 
well known that \emph{classical solutions} to Newton's  equations 
\eqref{eq:Newton} (i.e. trajectories in $\mathscr{C}^2((0,T),\Xhat) \cap 
\mathscr{C}^0([0,T],\X)$) can be found as critical points of the {\em 
Lagrangian action functional} associated to the Lagrangian $\L$ (introduced in Eq. 
\eqref{eq:Lagrangiana}) 
\begin{equation*}
\SSS(\gamma) \=  
\int_0^T  \L\big(\gamma(t), \dot \gamma(t)\big)\, dt,
\end{equation*}
in the smooth Hilbert manifold of all $W^{1,2}$-paths in $\X$ with some 
boundary 
conditions (fixed end points or periodic).
Let us then introduce
\[
\Omega := W^{1,2}([0,T],\X),
\]
with scalar product pointwise induced by the mass  scalar product.
Since we are interested in solutions satisfying some boundary conditions, for 
any $p,q \in \X$, we consider the closed linear submanifold of $\Omega$
\[
\Omega_{p,q}\=\Set{\gamma \in \Omega| \gamma(0)=p, \ \gamma(T)=q}.
\]
Given $\gamma \in \Omega$, we will  denote by $W^{1,2}(\gamma)$ the Hilbert 
space of all $W^{1,2}$-vector fields along $\gamma$:
\[
 W^{1,2}(\gamma)\= \Set{\bar\xi \in W^{1,2}([0,T],T\X)| \bar 
\xi(t)=(\gamma(t),\xi(t)), t \in [0,T]}.
\]
It is well-known that the tangent space at $\gamma$ to 
$\Omega$ can be identified with $W^{1,2}(\gamma)$.

When  $\gamma \in  \Omega_{p,q}$, then the tangent space at $\gamma$ to  $\Omega_{p,q}$ is the subspace $W^{1,2}_0(\gamma)$ of 
$W^{1,2}(\gamma)$ defined by 
\[
 W^{1,2}_0(\gamma)\= \Set{\bar\xi \in W^{1,2}(\gamma)|\xi(0) =0 = \xi(T)}.
\]
As before, there exists an identification between $T_{\gamma} \Omega_{p,q}$ with $ W^{1,2}_0(\gamma)$.
This space admits as a dense subset the space of 
smooth functions $\mathscr C^\infty_c(0,T;\X)$. 

In order to characterize asymptotic solutions as critical points of $\SSS$, we 
need to work in the general setting of non-smooth critical point theory, for  this 
reason we recall the following Definition.
\begin{defn}\label{def:differentiability}
Let $X$ be a Hilbert space, $\mathcal J$ a continuous functional on $X$ and let 
$Y$ be a dense subspace of $X$.
If 
\begin{itemize}
\item the directional derivative of $\mathcal J$ exists for all $x \in X$ in 
all 
direction $y \in Y$ (i.e. $D\mathcal J(x)[y]$ exists for all $x \in X$ and $y 
\in Y$), we say that $\mathcal J$ is {\em G\^ateaux $Y$-differentiable\/} and we 
will denote the G\^ateaux $Y$-differential with $D_G\vert_Y$. If 
$\mathcal J$ is G\^ateaux $Y$-differentiable, a point $x \in X$ is said to be a 
{\em  G\^ateaux $Y$-critical point\/} if 
\[
D_G\vert_Y\mathcal J(x)[y]=0 \quad \textrm{ for all } y \in Y;
\]
\item there exists a bounded linear functional $A: Y \to \R$ such that 
\[
 \mathcal J(x+h)-\mathcal J(x)= A\,h + o(h)
\]
for all $h \in Y$, we say  that $\mathcal J$ is {\em Fréchet 
$Y$-differentiable\/} and we denote the $Y$-Fréchet differential at $x$, $A$, 
by 
$D_Y\mathcal J(x)$. If $\mathcal J$ is Fréchet $Y$-differentiable, a point $x 
\in X$ is said to be a {\em   $Y$-critical point\/} if 
\[
D_Y\mathcal J(x)[y]=0 \quad \textrm{ for all } y \in Y. 
\]
\end{itemize}
\end{defn}
In the next result we characterise weak solutions of Equation \eqref{eq:Newton} 
(with fixed ends) as G\^ateaux $\mathscr C^\infty$-critical  points (where $\mathscr C^\infty_c$ = $\mathscr C^\infty_c(0,T;\R^{N})$) of the Lagrangian action functional. 
\begin{lem}\label{le:weak_sol}
Let $p \in \Xhat$. Then: 
\begin{enumerate}
\item[(i)] if $\gamma$ is a time-$T$ total collision solution for 
\eqref{eq:Newton} such that $\gamma(0)=p$,
then $\gamma \in\Omega_{p,0}$, $\SSS(\gamma)<+\infty$, and $\gamma$ is a 
$W^{1,2}$-solution (or \emph{weak solution}) for \eqref{eq:Newton}, that is
\begin{equation}\label{eq:weak}
\int_0^T \Mprod{\dot \gamma}{\dot \xi} = - \int_0^T\Mprod{\nabla 
\pothom(\gamma)}{\xi},
\qquad \forall \xi \in \mathscr {C}^\infty_0(0,T;\X).
\end{equation}
\item[(ii)] If $\gamma \in \Omega_{p,0}$, $\gamma|_{(0,T)} \subset \Xhat$, 
satisfies \eqref{eq:weak}, then $\SSS:\Omega_{p,0}\to [0,+\infty)\cup\{+\infty\}$ is 
G\^ateaux $\mathscr C^\infty_c$-differentiable at $\gamma$ and $\gamma$ is a 
G\^ateaux $\mathscr C^\infty_c$-critical point of $\SSS$ and a classical solution 
of 
\eqref{eq:Newton} on $(0,T)$.
\end{enumerate}
\end{lem}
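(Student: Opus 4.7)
Both parts of Lemma \ref{le:weak_sol} rest on controlling $\gamma$ and its derivative near $t=T$ via the Sundman-Sperling estimates of Lemma \ref{thm:SundmanSperling}, plus a standard regularity bootstrap for the interior.

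\textbf{Part (i).} Let $\gamma$ be a time-$T$ total collision solution with $\gamma(0)=p$. Writing $\gamma = rs$ with $r = \Mnorm{\gamma}$ and $s\in\E$, the identity $\Mprod{s}{\dot s}=0$ (from $\Mnorm{s}\equiv 1$) gives $\Mnorm{\dot\gamma}^2 = \dot r^2 + r^2\Mnorm{\dot s}^2$. The Sundman-Sperling estimates produce $\dot r(t) = O\bigl((T-t)^{-\alpha/(2+\alpha)}\bigr)$ and, since $r(t)\sim K_1(T-t)^{2/(2+\alpha)}$ while $\Mnorm{\dot s(t)} = o((T-t)^{-1})$, also $r^2\Mnorm{\dot s}^2 = o\bigl((T-t)^{-2\alpha/(2+\alpha)}\bigr)$. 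The homogeneity of $\pot$ together with the boundedness of $\pot|_\E$ along $\gamma$ yields $\pothom(\gamma(t)) = O\bigl((T-t)^{-2\alpha/(2+\alpha)}\bigr)$. Because $\alpha<2$, the critical exponent $2\alpha/(2+\alpha)$ is strictly less than $1$, so both integrands lie in $L^1(0,T)$; combined with $\gamma\in\mathscr{C}^0([0,T],\X)$ this puts $\gamma\in\Omega_{p,0}$ and forces $\SSS(\gamma)<+\infty$. The weak formulation \eqref{eq:weak} then follows at once: for $\xi\in\mathscr{C}^\infty_c(0,T;\X)$ the support of $\xi$ is a compact subset of $(0,T)$ on which $\gamma$ is $\mathscr{C}^2$, and classical pointwise integration by parts is legitimate with vanishing boundary terms.

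\textbf{Part (ii).} Let $\gamma\in\Omega_{p,0}$ with $\gamma|_{(0,T)}\subset\Xhat$ satisfy \eqref{eq:weak}, and pick $\xi\in\mathscr{C}^\infty_c(0,T;\X)$. The key observation is that $\gamma(\mathrm{supp}\,\xi)$ is a compact subset of $\Xhat$, so $\gamma+\varepsilon\xi$ remains in a fixed compact of $\Xhat$ for $|\varepsilon|$ small; on this compact both $\pothom$ and $\nabla\pothom$ are smooth and uniformly bounded. A straightforward computation together with the dominated convergence theorem gives differentiability of $\varepsilon\mapsto\SSS(\gamma+\varepsilon\xi)$ at $\varepsilon=0$ with
\[
D_G\vert_{\mathscr{C}^\infty_c}\SSS(\gamma)[\xi] = \int_0^T \Mprod{\dot\gamma}{\dot\xi}\,dt + \int_0^T \Mprod{\nabla\pothom(\gamma)}{\xi}\,dt,
\]
which vanishes by \eqref{eq:weak}. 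To promote $\gamma$ to a classical solution on $(0,T)$, I would localise on any $[a,b]\subset(0,T)$: there $\gamma([a,b])$ is compact in $\Xhat$ and $\nabla\pothom\circ\gamma\in L^\infty([a,b])$, while \eqref{eq:weak} reads $(M\dot\gamma)' = \nabla\pothom(\gamma)$ in the sense of distributions on $(a,b)$; this forces $M\dot\gamma\in W^{1,\infty}([a,b])$, hence $\dot\gamma$ continuous, and a second iteration upgrades $\gamma$ to $\mathscr{C}^2([a,b],\Xhat)$ satisfying \eqref{eq:Newton} pointwise.

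\textbf{Main obstacle.} The only genuinely delicate point is the $L^1$-integrability at the collapse time $t=T^-$ in (i): both the kinetic and the potential densities blow up with the common rate $(T-t)^{-2\alpha/(2+\alpha)}$, so the hypothesis $\alpha<2$ is exactly what keeps this exponent strictly below $1$ and hence makes the whole variational framework well posed. Everything else — the G\^ateaux differentiability, obtained via the compactness of $\mathrm{supp}\,\xi$ away from the singularity, and the interior regularity bootstrap — is a routine consequence of the smoothness of $\pot$ on $\Xhat$.
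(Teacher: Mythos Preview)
Your proposal is correct and follows essentially the same line as the paper's own proof: for (i) you invoke the Sundman--Sperling asymptotics to obtain the common blow-up rate $(T-t)^{-2\alpha/(2+\alpha)}$ for both the kinetic and potential densities and note that $\alpha<2$ keeps this integrable, then obtain \eqref{eq:weak} by integration by parts on the compact support of $\xi$; for (ii) you use compactness of $\gamma(\mathrm{supp}\,\xi)$ in $\Xhat$ plus dominated convergence to compute the G\^ateaux derivative, and finish with a local regularity bootstrap. The paper does exactly this, only phrasing the final step as ``standard elliptic regularity arguments'' where you spell out the $W^{1,\infty}$ iteration.
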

\begin{proof}
\emph{(i)} As a direct consequence of item (i) in Lemma 
\ref{thm:SundmanSperling}, the following 
pointwise asymptotic behaviour in the neighbourhood of the final instant $t=T$ 
holds:
\[
 \Mnorm{\dot \gamma(t)}^2 \sim [K(T-t)]^{-\frac{2\alpha}{2+\alpha}}
 \textrm{ and }
 \pothom(\gamma(t))\sim [K(T-t)]^{-\frac{2\alpha}{2+\alpha}}\,\pothom(s(t)).
\]
By using item (ii) in Lemma \ref{thm:SundmanSperling}, we obtain $\pothom(\gamma(t)) \sim [K(T-t)]^{-\frac{2\alpha}{2+\alpha}}b$.

Combining the first estimate with the continuity of $\gamma$ on $[0,T]$, we 
deduce that $\gamma \in \Omega_{p,0}$. Furthermore, by taking into account that the 
image of the function $ g:(0,2) \to \R$ defined by 
$g(\alpha)\=-2\alpha/(2+\alpha)$ is contained in the interval $(-1,0)$, it 
follows that, the integral $\SSS$ along any time-$T$ total collision solutions 
converges. This proves that  $\SSS(\gamma)$ is finite. 
Equation \eqref{eq:weak} is the weak formulation of Equation \eqref{eq:Newton} for the solution $\gamma$.\\
\emph{(ii)} Let $\xi \in \mathscr C^\infty_c(0,T;\X)$, we compute
\begin{equation}\label{eq:var-prima}
\dfrac{1}{h}\big(\SSS(\gamma+h\xi)- \SSS(\gamma)\big)
= \int_{\text{supp}\xi}\left\{ \Mprod{\dot \gamma}{\dot \xi} +\frac12 h |\dot 
\xi|_M^2 +\frac1h \left[U(\gamma +h\xi)-U(\gamma)\right]\right\}.
\end{equation}
Since $\gamma|_{(0,T)} \subset \Xhat$, $\xi \in \mathscr C^\infty_c(0,T; 
\X)$, whenever $h \in \R$ has a 
small absolute value, then $\gamma(t) +h\xi(t)$ still belong to the open set 
$\Xhat$ and, by dominated convergence 
\[
d\,\SSS(\gamma)[\xi] = \lim_{h \to 0} \dfrac{1}{h}\big(\SSS(\gamma+h\xi)- 
\SSS(\gamma)\big) = \int_{\text{supp}\xi}\left\{ \Mprod{\dot \gamma}{\dot 
\xi} 
+ \Mprod{\nabla U(\gamma )}{\xi}\right\}.
\]
Hence $d\, \SSS(\gamma)$ is a bounded 
linear functional when regarded on the densely immersed subspace $\mathscr 
C^\infty_c(0,T;\X)$ of $W_0^{1,2}(\gamma)
$, $\SSS$ is G\^ateaux-$\mathscr C^\infty_c$-differentiable and $d\, 
\SSS(\gamma)$ is its G\^ateaux 
differential at $\gamma$. 
In particular $d\,\SSS(\gamma)[\xi]=0$ for any $\xi \in \mathscr C^\infty_c$ 
means that $\gamma$ is a  G\^ateaux $\mathscr C^\infty_c$-critical point in the 
sense specified in Definition \ref{def:differentiability}.
We conclude by standard elliptic regularity arguments.
\end{proof}

{\bf Parabolic motions.} First of all we remark that, even if parabolic motions 
do not interact with the singular set (cf. Remark \ref{rem:no_coll}), they are 
unbounded motions, they miss the integrability properties at infinity, hence 
they don't lie in the set $W^{1,2}([0,+\infty),\Xhat)$ but just in $W_{loc}^{1,2}([0,+\infty),\Xhat)$. Although the functional $\SSS$ is infinite along a parabolic motion $\gamma$, it can be G\^ateaux $\mathscr C^\infty_c$-differentiable at $\gamma$: indeed the difference in Equation \eqref{eq:var-prima} turns out to be finite for every $\xi \in \mathscr C^\infty_c$. {In a more abstract way, it is possible to define the functional $\SSS$ on the Hilbert manifold $\gamma + W^{1,2}([0,+\infty),\Xhat)$.} Hence we can argue as in Lemma \ref{le:weak_sol} to deduce the next result.
\begin{lem}\label{thm:azione-finita}
Let $\gamma \in W_{loc}^{1,2}([0,+\infty),\Xhat)$ be such that for any $T>0$ Equation \eqref{eq:weak} is fulfilled. Furthermore let us assume that 
\[
 \lim_{t \to +\infty} \K\big(\dot\gamma(t)\big)=0;
\]
then the Lagrangian action functional
\[
\SSS(\gamma) \=  
\int_0^{+\infty}  \L\big(\gamma(t), \dot \gamma(t)\big)\, dt
\]
is infinite and it is G\^ateaux $\mathscr C^\infty_c$-differential at $\gamma$. 
Furthermore  $\gamma$ is a $\mathscr C^\infty_c$-critical point of $\SSS$ and 
it is indeed a classical solution of the Equation  \eqref{eq:Newton} on 
$(0,+\infty)$.
\end{lem}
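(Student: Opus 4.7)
The plan is to mirror the argument used for Lemma~\ref{le:weak_sol}, adapting it to the parabolic (non-integrable at infinity) setting. Four independent claims must be verified: the divergence of $\SSS(\gamma)$, the G\^ateaux $\mathscr{C}^\infty_c$-differentiability, the critical-point property, and the fact that $\gamma$ is a classical solution on $(0,+\infty)$.

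First, to show $\SSS(\gamma) = +\infty$, I would invoke the Sundman–Sperling asymptotic estimates in Lemma~\ref{thm:SundmanSperling} for the case $T=+\infty$. By (i) and (ii) of that lemma one gets the pointwise asymptotics
\[
\Mnorm{\dot\gamma(t)}^{2} \;\sim\; 2b\,[Kt]^{-\tfrac{2\alpha}{2+\alpha}}, \qquad \pothom(\gamma(t)) \;\sim\; b\,[Kt]^{-\tfrac{2\alpha}{2+\alpha}},
\]
as $t\to+\infty$. Since $\alpha\in(0,2)$, the exponent $-2\alpha/(2+\alpha)$ lies in $(-1,0)$, so the integral $\int_1^{+\infty} t^{-2\alpha/(2+\alpha)}\,dt$ diverges. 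Both $\K$ and $\pothom$ being non-negative along $\gamma$, this forces $\SSS(\gamma)=+\infty$.

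Next, for the G\^ateaux $\mathscr{C}^\infty_c$-differentiability at $\gamma$, the crucial observation is that every test vector field $\xi\in\mathscr{C}^\infty_c(0,+\infty;\X)$ has compact support in a bounded interval $[a,b]\subset(0,+\infty)$. On this compact set $\gamma([a,b])$ is a compact subset of the open set $\Xhat$ (as noted in Remark~\ref{rem:no_coll}, parabolic motions stay uniformly away from $\Delta$ because energy conservation combined with vanishing kinetic energy keeps mutual distances bounded below). Therefore, as in the proof of Lemma~\ref{le:weak_sol}, for $|h|$ small the path $\gamma+h\xi$ still takes values in $\Xhat$ on $[a,b]$, and the finite difference
\[
\frac{1}{h}\bigl(\SSS(\gamma+h\xi)-\SSS(\gamma)\bigr) \;=\; \int_{\mathrm{supp}\,\xi} \!\!\Bigl\{\Mprod{\dot\gamma}{\dot\xi} + \tfrac{1}{2}h|\dot\xi|_M^{2} + \tfrac{1}{h}\bigl[\pothom(\gamma+h\xi)-\pothom(\gamma)\bigr]\Bigr\}
\]
reduces to an integral over a compact set. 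Dominated convergence, using smoothness of $\pothom$ on $\Xhat$ and compactness of $\gamma(\mathrm{supp}\,\xi)$, delivers
\[
d\SSS(\gamma)[\xi] \;=\; \int_{\mathrm{supp}\,\xi}\!\Bigl\{\Mprod{\dot\gamma}{\dot\xi}+\Mprod{\nabla\pothom(\gamma)}{\xi}\Bigr\}\,dt,
\]
which is linear and bounded on $\mathscr{C}^\infty_c$; hence $\SSS$ is G\^ateaux $\mathscr{C}^\infty_c$-differentiable at $\gamma$.

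Finally, for the critical-point and classical-solution assertions: by hypothesis the weak equation \eqref{eq:weak} is satisfied on every finite interval $[0,T]$, so in particular the right-hand side of the displayed formula above vanishes for every $\xi\in\mathscr{C}^\infty_c$, proving that $\gamma$ is a G\^ateaux $\mathscr{C}^\infty_c$-critical point of $\SSS$. Standard elliptic bootstrapping applied on arbitrary compact subintervals, exactly as in Lemma~\ref{le:weak_sol}(ii), upgrades $\gamma$ to a classical $\mathscr{C}^2$ solution of \eqref{eq:Newton} on $(0,+\infty)$. The main conceptual obstacle is only apparent: one must avoid asking $\SSS$ itself to be finite: the whole point is that, thanks to the compact-support restriction on test functions, differentiability makes sense even though $\SSS(\gamma)=+\infty$. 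Concretely this is implemented by viewing $\SSS$ as defined on the affine Hilbert manifold $\gamma+W^{1,2}([0,+\infty),\X)$ as indicated in the statement, so that all increments lie in an integrable class relative to $\gamma$.
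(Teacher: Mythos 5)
Your proof is correct and follows essentially the same route as the paper, which itself only remarks that the finite difference in Equation \eqref{eq:var-prima} is finite for compactly supported $\xi$ and refers back to Lemma \ref{le:weak_sol}; you have supplied the missing Sundman--Sperling asymptotics that justify the divergence of $\SSS(\gamma)$. One small ordering point: invoking Lemma \ref{thm:SundmanSperling} for the divergence requires $\gamma$ to already be a classical solution, so the elliptic bootstrap on compact subintervals (which uses only the weak equation) should come before, not after, the divergence argument.
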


\subsection{A variational version of McGehee 
regularisation}\label{subsec:McGehee}

The Lagrangian function $\L$ defined in Equation \eqref{eq:Lagrangiana} can be 
written 
in term of the radial and angular coordinates $
(r,s) \in \mathcal{N}\=[0,+\infty) \times \mathcal{E}$ introduced in Formula 
\eqref{eq:rs}.  
Indeed $\L$ transforms into \[
\bar\L(r,\dot r,s,\dot s) \= \dfrac12 \dot r^2 + \dfrac12 r^2\Mnorm{\dot s}^2 + 
\frac{1}{r^{\alpha}}\pothom(s).
\]
which is defined on $T\hat{\mathcal N}$, where 
\[
\hat{\mathcal N} := {\mathcal N} \setminus\bar \Delta,
\]
and $\bar \Delta$ corresponds to the singular set $\Delta$ in the new 
variables, 
that is
\[
\bar \Delta := \{(0,s) : s \in \mathcal{E}\}\cup\{ (r,s) : r \in (0,+\infty),\, 
s \in \Delta \cap \mathcal{E}\}.
\]

Let us now consider an $s_0$-a.s. $\gamma$ for the dynamical system 
\eqref{eq:Newton} and let $r=r(t)$ and $s=s(t)$ be its radial and angular 
components; inspired by the behaviour of $r(t)$ when $t \to T$,  (cf. Lemma 
\ref{thm:SundmanSperling}, (i)), we introduce the new time-variable 
\begin{equation}\label{eq:tau}
\tau =\tau(t) :=\int_0^t r^{-\frac{2+\alpha}{2}}.
\end{equation}
The function $\tau(t)$ is strictly monotone increasing and $\tau(0)=0$; 
furthermore since as $t \to T^-$ 
\begin{itemize}
\item $r^{-\frac{2+\alpha}{2}}(t) \sim \frac{1}{K(T-t)}$, when $T<+\infty$ (\ie 
$\gamma$ is a time-$T$ total collision motion),
\item $r^{-\frac{2+\alpha}{2}}(t) \sim \frac{1}{Kt}$ when $T=+\infty$ (\ie 
$\gamma$ is parabolic motion),
\end{itemize}
then in both cases the variable $\tau$ varies on the half line, indeed
\[
\int_{0}^{T} r^{-\frac{2+\alpha}{2}} = +\infty.
\]
We now define the variable $\rho \in [0,+\infty)$ as 
\begin{equation}\label{eq:rho}
\rho(\tau) \=r^{\frac{2-\alpha}{4}}(t(\tau)),
\end{equation}
and, denoting by $'$ the derivative with respect to $\tau$ we have
\[
\rho'(\tau) = \dfrac{2-\alpha}{4} r^{-\frac{2+\alpha}{4}}(t(\tau))r'(t(\tau)).
\]
Since 
\begin{equation}\label{eq:constraint_tempo}
 dt = r^{\frac{2+\alpha}{2}} \, d\tau,
\end{equation}
we have 
\[
\rho'(\tau) = \dfrac{2-\alpha}{4} r^{-\frac{2+\alpha}{4}}(t(\tau))\dot 
r(t(\tau))r^{\frac{2+\alpha}{2}}(t(\tau)).
\]
Using Lemma \ref{thm:SundmanSperling} (i) and (iii), we deduce that in the new time variable $\tau$, both classes of $s_0$-a.s., collision (resp. parabolic), share the same asymptotic behavior:
the variable $\rho$ has an exponential decay (resp. growth) while the speed of the angular part tends to 0. More precisely 
\begin{equation}\label{eq:stima_rho'/rho}
\lim_{\tau \to +\infty} \dfrac{\rho'(\tau)}{\rho(\tau)} = \overline \delta_\alpha,
\end{equation}
where the constant ${\overline\delta}_\alpha$ is
\[
\bar \delta_\alpha \= 
\begin{cases}
- \dfrac{2-\alpha}{4}\sqrt{2\pot(s_0)}, & \textrm{ if }\  T<+\infty\\
  \dfrac{2-\alpha}{4}\sqrt{2\pot(s_0)}, & \textrm{ if }\,  T=+\infty
\end{cases},
\]
while
\begin{equation}\label{eq:stima_s'}
\lim_{\tau \to +\infty} |s'(\tau)| = 0.
\end{equation}
\begin{rem}[\emph{on $s_0$-homothetic-parabolic motions}]\label{rem:hom_rho}
When we consider an $s_0$-homothetic parabolic motion $\gamma^*$ (cf. Remark \ref{rem:hom}) and its 
corresponding radial variable $\rho_0(\tau)$, then the following quantity remains constant along the trajectory
\[
\dfrac{\rho_0'(\tau)}{\rho_0(\tau)} = \bar \delta_\alpha,
\]
so that $\rho_0(\tau)$ has an exponential behavior: $\rho_0$ decays if $T$ is finite, increases when $T=+\infty$. Let us recall that in both cases the new time variable $\tau$ varies on $[0,+\infty)$.
\end{rem}
In these new coordinates $(\rho,s) \in \mathcal{N}$, the Lagrangian $\L$ 
transforms into $\hat\L: T\hat{\mathcal N}\to \R$ and reads as 
\begin{equation}\label{eq:densityrhos}
\hat\L(\rho,s,\rho',s')\= \dfrac{1}{2}\left(\dfrac{4}{2-\alpha}\right)^2 
\rho'^2 
+ \rho^2 \left(\dfrac{1}{2}\Mnorm{s'}^2 + \pothom(s)\right).
\end{equation}
We note that, when we have fixed a finite $T>0$ (in the case of total collision 
motions), the time scaling given in Eq.~\eqref{eq:constraint_tempo} imposes an 
infinite dimensional constraint given by 
\begin{equation}
\label{eq:infiniteconstraint}
\|\rho^\beta\|_{L^1(0,+\infty)} = T, 
\qquad \textrm{ for } \beta\= 
\dfrac{2(2+\alpha)}{2-\alpha}>2.
\end{equation}

Dealing with the Lagrangian $\hat L$ one have to 
take into account also the pointwise holonomic  constraint $|s|_M^2-1=0$. In order to prevent the presence of this second constraint, it is 
convenient to define
\begin{equation}\label{eq:y}
y(\tau) = \rho(\tau) s(\tau),
\qquad \tau \in [0,+\infty),
\end{equation}
that actually satisfies the relations 
\[
\Mnorm{y'}^2 = (\rho')^2 + \rho^2 \Mnorm{s'}^2 \text{ and } \Mnorm{y}'=\rho'.
\]
The Lagrangian function $\L : T\Xhat \to \R$ in the variable 
$y$ becomes
\[
\L(y, y')= \dfrac{c_\alpha}{2} (\Mnorm{y}')^2 + \dfrac12 \left[ \Mnorm{y'}^2 + 
2\pothom(s)\Mnorm{y}^2\right],
\]
where
\[
c_{\alpha} \= \left(\dfrac{4}{2-\alpha}\right)^2 -1.
\]
Let us now consider $T<+\infty$.\\
By using the generalization in Calculus of Variations of the classical 
Lagrangian multipliers method (cf. \cite{GF63} for further details) with respect to integral constraints 
(for us constraint \eqref{eq:infiniteconstraint})  and by virtue 
of the exponential decay of the variable $\rho$ stated in Equation \eqref{eq:stima_rho'/rho}, we can conclude that if $\gamma$ is a time-$T$ total collision solution for \eqref{eq:Newton} then $y$ is a (weak) extremal for the functional
\[
\mathbb{J} : W^{1,2}\left( [0,+\infty),\X\right) \times \R \to \R\cup\{+\infty\}
\]
given by
\begin{equation}\label{eq:actioniny}
\mathbb{J}(y,\lambda) := \int_0^{+\infty} \left\{ \dfrac{c_\alpha}{2}
(\Mnorm{y}')^2 + \dfrac12 \left[ \Mnorm{y'}^2 + 2\pothom(s)\Mnorm{y}^2\right] + 
\frac{\lambda}{\beta} (\Mnorm{y}^{\beta}-T) \right\}\, d\tau.
\end{equation}
In order to compute the Lagrange multiplier $\lambda$ we write the  
Euler-Lagrange equations associated to Equation \eqref{eq:actioniny}
\[
\lambda \Mnorm{y}^{\beta -2} y = y'' -(\alpha + 2)\Mnorm{y}^\alpha \pothom(y)y 
- \Mnorm{y}^{\alpha + 2}\nabla \pothom(y) + c_\alpha\left[ 
\frac{\Mnorm{y'}^2}{\Mnorm{y}^2} + \frac{y\cdot y''}{\Mnorm{y}^2} - 
\frac{(y\cdot y')^2}{\Mnorm{y}^4}\right]y,
\] 
and the expression of the first integral of the energy (in the variable $y$)
\begin{equation}\label{eq:energia}
h \= \frac{1}{\Mnorm{y}^\beta}\left\{ \frac{c_\alpha}{2} 
\left(\Mnorm{y}'\right)^2 + \frac12 \Mnorm{y'}^2 - 
\Mnorm{y}^{2+\alpha}\pothom(y) \right\}.
\end{equation}
We can now easily deduce that
\begin{equation}\label{eq:energy}
\frac{d}{d\tau}\left\{ \Mnorm{y}^{\beta}\left(\frac{\lambda}{\beta}-h\right)\right\} = 0
\end{equation}
hence necessarily 
\[
\lambda = \beta h.
\]
Finally we conclude that, given a time-$T$ total collision solution  $\gamma$ with energy $h$ then the corresponding $y$ is a (weak) extremal for
\begin{equation}\label{eq:actioniny_h}
\mathbb{J}(y) := \int_0^{+\infty} \left\{ \dfrac{c_\alpha}{2}
(\Mnorm{y}')^2 + \dfrac12 \left[ \Mnorm{y'}^2 + 2\pothom(s)\Mnorm{y}^2\right] + 
h(\Mnorm{y}^{\beta}-T) \right\}\, d\tau.
\end{equation}
Let us now consider $T=+\infty$.\\
In this case the function $y$ corresponding to the parabolic motion $\gamma$ does not belong anymore to $W^{1,2}\left( [0,+\infty);\X\right)$. 
Here we do not have to impose the integral constraints \eqref{eq:infiniteconstraint}, since $\|\rho^\beta\|_{L^1(0,+\infty)} = +\infty$ follows from the exponential growth of $\rho$. 
Nevertheless arguing precisely as in Lemma \ref{thm:azione-finita}, it turns out that $y$ is a weak $\mathscr{C}^{\infty}_0$-critical point for 
\begin{equation}\label{eq:actioniny_h=0}
\int_0^{+\infty} \left\{ \dfrac{c_\alpha}{2}
(\Mnorm{y}')^2 + \dfrac12 \left[ \Mnorm{y'}^2 + 2\pothom(s)\Mnorm{y}^2\right]\right\}\, d\tau,
\end{equation}
although such functional is not finite at $y$.

From now on, with a slight abuse of notation, both when $T$ is finite or $T=+\infty$, we will say that the trajectory $y(\tau) = \rho(\tau)s(\tau)$ is an asymptotic solution whenever it corresponds (through the variable changes given in Equations \eqref{eq:tau} and \eqref{eq:rho}) to an a.s. $\gamma(t)=r(t)s(t)$. 
The discussion carried out in this section ensures that an $s_0$-a.s. $y$ is a Gateaux $\mathscr{C}^{\infty}_0$-critical point for 
\begin{equation}\label{eq:Jfor-s_0am}
\mathbb{J}(y) = 
\begin{cases} \displaystyle 
\int_0^{+\infty} 
\dfrac{c_\alpha}{2}
(\Mnorm{y}')^2 + \dfrac12 \left[ \Mnorm{y'}^2 + 2\pothom(s_0)\Mnorm{y}^2\right] 
+ W(y) + 
h(\Mnorm{y}^{\beta}-T)\, d\tau, & \text{if } T<+\infty, \\
\displaystyle 
\int_0^{+\infty} 
\dfrac{c_\alpha}{2}
(\Mnorm{y}')^2 + \dfrac12 \left[ \Mnorm{y'}^2 + 2\pothom(s_0)\Mnorm{y}^2\right] 
+ W(y)\, d\tau, & \text{if } T=+\infty,
\end{cases}
\end{equation}
where
\begin{equation}\label{eq:W}
W(y) := \Mnorm{y}^2\left[ \pothom(s) - \pothom(s_0)\right] = 
\Mnorm{y}^{2+\alpha} \pothom(y) - \Mnorm{y}^2\pothom(s_0).
\end{equation}
In the special case $s(\tau)\equiv s_0$, that is when we consider an homothetic 
motion $y_0(\tau)=\rho_0(\tau)s_0$ (recall Remark \ref{rem:hom}), then $W(y_0)=0$.

\subsection{Second Variation of the McGehee 
functional}\label{subsec:firstsecondvariation}
The aim of this subsection is to compute the second variation and the 
associated quadratic form for the functional $\mathbb{J}$ defined in \eqref{eq:Jfor-s_0am}
along an $s_0$-a.s.. The following computation lemma will play an import role.
\begin{lem}\label{lem:secondW}
Let $s_0$ be a central configuration for $\pothom$ and $y_0(\tau) = \rho_0(\tau) s_0$ be an homothetic  motion 
(with a total collision or parabolic). Then the following relation holds
\[
d^2W(y_0)[\xi,\eta] = d^2\widetilde U(s_0)[\xi,\eta],
\qquad \forall\, \xi,\eta \in  \X,
\]
where $\widetilde U(x)\=\abs{x}_M^\alpha U(x)$, $x \in \X$.
\end{lem}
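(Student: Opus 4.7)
The plan is to reduce everything to the scale invariance of $\widetilde U$ and to the fact that $s_0$, being a central configuration, is actually a critical point of $\widetilde U$ on all of $\X\setminus\{0\}$ (not merely on the ellipsoid). The first step is to rewrite $W$ in a factored form that isolates a vanishing factor. Using the $(-\alpha)$-homogeneity of $\pot$ and writing $y=\rho s$ with $|s|_M=1$, one gets
\[
W(y) \;=\; \Mnorm{y}^{2+\alpha}\pot(y) - \Mnorm{y}^2\pot(s_0) \;=\; \Mnorm{y}^2\bigl[\widetilde U(y) - \pot(s_0)\bigr].
\]
Since $\widetilde U$ is $0$-homogeneous and $\Mnorm{s_0}=1$, we have $\widetilde U(y_0)=\widetilde U(s_0)=\pot(s_0)$, so the bracket $g(y):=\widetilde U(y)-\pot(s_0)$ satisfies $g(y_0)=0$.

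Next I would verify $dg(y_0)=0$. Differentiating $\widetilde U(x)=\Mnorm{x}^\alpha\pot(x)$ and using the central configuration equation \eqref{eq:cc_eq},
\[
\nabla\widetilde U(s_0) \;=\; \alpha\pot(s_0)\,Ms_0 \,+\, \nabla\pot(s_0) \;=\; \alpha\pot(s_0)Ms_0 - \alpha\pot(s_0)Ms_0 \;=\; 0,
\]
and then by $(-1)$-homogeneity of $\nabla\widetilde U$ one also has $\nabla\widetilde U(y_0)=\rho_0^{-1}\nabla\widetilde U(s_0)=0$.

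With $W=f\cdot g$ where $f(y)=\Mnorm{y}^2$, $g(y_0)=0$ and $dg(y_0)=0$, the Leibniz rule collapses the second derivative at $y_0$ to a single surviving term:
\[
d^2W(y_0)[\xi,\eta] \;=\; f(y_0)\,d^2 g(y_0)[\xi,\eta] \;=\; \rho_0^2\, d^2\widetilde U(y_0)[\xi,\eta].
\]
Finally, differentiating twice the identity $\widetilde U(\lambda y)=\widetilde U(y)$ yields the $(-2)$-homogeneity $d^2\widetilde U(\lambda y)=\lambda^{-2}d^2\widetilde U(y)$, so at $\lambda=\rho_0$, $y=s_0$ one gets $d^2\widetilde U(y_0)=\rho_0^{-2}d^2\widetilde U(s_0)$, and the $\rho_0^2$ factor cancels, giving the claimed equality.

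There is no real obstacle: the argument is purely algebraic once one observes the factorization $W=\Mnorm{y}^2(\widetilde U-\pot(s_0))$ and uses scale invariance together with the central configuration equation. The only point where care is needed is confirming that $\nabla\widetilde U$ vanishes at $s_0$ (and hence on the whole ray), which is exactly where \eqref{eq:cc_eq} enters; this promotes a constrained criticality on $\mathcal E$ to an unconstrained one on $\X\setminus\{0\}$ and is the structural reason the two cross terms in the Leibniz expansion drop out.
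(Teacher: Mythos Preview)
Your proof is correct and is a genuinely different, more conceptual route than the paper's. The paper proceeds by brute force: it expands $dW(y)[\xi]$ and $d^2W(y)[\xi,\eta]$ term by term, then evaluates at $y_0=\rho_0 s_0$ using the homogeneities of $\pot$, $\nabla\pot$, $D^2\pot$ and the central configuration equation \eqref{eq:cc_eq}, arriving at
\[
d^2 W(y_0)[\xi,\eta]=\langle D^2\pot(s_0)\xi,\eta\rangle-\alpha(\alpha+2)\Mprod{s_0}{\eta}\Mprod{s_0}{\xi}\pot(s_0)+\alpha\Mprod{\xi}{\eta}\pot(s_0),
\]
which it then identifies with $d^2\widetilde U(s_0)[\xi,\eta]$ by quoting an explicit formula from \cite{BS08}. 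Your argument instead observes the factorization $W=\Mnorm{y}^2\bigl(\widetilde U(y)-\pot(s_0)\bigr)$, uses that $\widetilde U$ is $0$-homogeneous and that $s_0$ is an \emph{unconstrained} critical point of $\widetilde U$ (the structural content of \eqref{eq:cc_eq}), so that both the value and the differential of the second factor vanish at $y_0$; the Leibniz rule then collapses $d^2W(y_0)$ to $\rho_0^2\,d^2\widetilde U(y_0)$, and the $(-2)$-homogeneity of $D^2\widetilde U$ cancels the $\rho_0^2$. This avoids both the long expansion and the external reference, and it makes transparent \emph{why} the answer is independent of $\rho_0$. The paper's approach, on the other hand, has the minor advantage of producing along the way an explicit coordinate expression for $d^2\widetilde U(s_0)$.
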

\begin{proof}
By a direct calculation, the $\xi$-directional derivative of the function $W$ 
defined in \eqref{eq:W} is given by 
\[
 \begin{split}
d W(y)[\xi] & = \dfrac{d}{d\varepsilon} 
\left[W(y+\varepsilon \xi)\right]\vert_{\varepsilon=0}\\
& = (\alpha+2)\Mnorm{y}^\alpha \Mprod{y}{\xi} \pothom(y)+ \Mnorm{y}^{\alpha+2} 
\langle\nabla \pothom(y), \xi\rangle-2 \Mprod{y}{\xi} \pothom(s_0).
\end{split}
\]
By a similar calculation we obtain 
\[
\begin{split}
d^2 W(y)[\xi,\eta] & =  \dfrac{d}{d\varepsilon} \left[W(y+\varepsilon 
\eta)[\xi]\right]_{\varepsilon=0} \\
& = 
\alpha(\alpha+2)\Mnorm{y}^{\alpha-2}\Mprod{y}{\xi}\Mprod{y}{\eta}\pothom(y)+ 
(\alpha+2) \Mnorm{y}^\alpha\Mprod{\xi}{\eta}
\pothom(y)\\
& + (\alpha+2)\Mnorm{y}^\alpha \Mprod{y}{\eta}\langle\nabla 
\pothom(y),\xi\rangle +(\alpha+2)\Mnorm{y}^\alpha\Mprod{y}{\xi} 
\langle\nabla \pothom(y), \eta\rangle\\
& + \Mnorm{y}^{\alpha+2} \langle D^2 \pothom(y)\xi,\eta \rangle -2 
\Mprod{\xi}{\eta}\pothom (s_0).
\end{split}
\]
Thus, since $\pothom$ is homogeneous of degree $-\alpha$, its first and second 
derivatives are respectively of  degree $-(\alpha+1)$ and $-(\alpha+2)$; choosing $y(\tau) = y_0(\tau) = |y_0(\tau)|s_0$ we 
infer
\[
\begin{split}
d^2 W(y_0)[\xi, \eta] = & \langle D^2 \pothom(s_0)\xi,\eta \rangle + 
(\alpha+2)\Mprod{s_0}{\xi}\langle\nabla \pothom(s_0), \eta\rangle + 
(\alpha+2)\Mprod{s_0}{\eta} \langle\nabla \pothom(s_0),\xi\rangle\\
& + \alpha(\alpha+2)\Mprod{s_0}{\eta}\Mprod{s_0}{\xi} \pothom(s_0) + 
(\alpha+2)\Mprod{\eta}{\xi}\pothom(s_0) -2 \Mprod{\xi}{\eta}\pothom (s_0).
\end{split}
\]
Being  $s_0$ a central configuration it holds $\nabla \pothom(s_0)=-\alpha 
\pothom(s_0) M s_0$ (cf. Definition \ref{def:cc}) and by the previous 
computation we immediately obtain
\[
d^2 W(y_0)[\xi, \eta] = \langle D^2 \pothom(s_0)\xi,\eta \rangle-\alpha 
(\alpha+2)\Mprod{s_0}{\eta}\Mprod{s_0}{\xi}
\pothom(s_0)+ \alpha  \Mprod{\xi}{\eta} \pothom (s_0).
\]
The thesis follows by taking into account that the last 
member in the previous formula is nothing but $d^2\widetilde U(s_0)[\xi,\eta]$ (for the details of this computation cf. \cite[Formula (8), Section 2]{BS08}).
\end{proof}
\begin{prop}\label{prop:var_seconda}
Let  $y = \rho s$ be an $s_0$-a.s. with energy $h$ (possibly 0). Then the 
quadratic form associated to the second variation of $\mathbb{J}$ at $y$ is
\begin{multline}\label{eq:secondavar}
d^2 \mathbb{J}(y)[\eta,\eta] = \int_0^{+\infty} 
\bigg\{ c_\alpha
        \bigg[ \left(\dfrac{\rho'}{\rho}\right)^2(
        \Mprod{s}{\eta}^2-\Mnorm{\eta}^2)
               +\Mprod{s'}{\eta}^2 +\Mprod{s}{\eta'}^2
       \\              +2\dfrac{\rho'}{\rho}\left[ \Mprod{\eta}{\eta'} 
-\Mprod{s}{\eta}\Mprod{s'}{\eta}
       -\Mprod{s}{\eta}\Mprod{s}{\eta'}\right] 
       + 2\Mprod{s'}{\eta}\Mprod{s}{\eta'}
        \bigg] 
        \\
        + \Mnorm{\eta'}^2 + 2\pothom(s_0)\Mnorm{\eta}^2 
+ d^2W(y)[\eta,\eta] +  h\beta\rho^{\beta-2}[(\beta-2)\Mprod{s}{\eta}^2 + \Mnorm{\eta}^2] \bigg\}\, d\tau
\end{multline}
for every  $\eta \in W^{1,2}_0([0,+\infty),\X)$.
\end{prop}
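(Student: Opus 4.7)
The plan is to compute $d^2\mathbb{J}(y)[\eta,\eta]$ by Taylor-expanding the integrand of $\mathbb{J}(y+\epsilon\eta)$ to second order in $\epsilon$ and reading off twice the coefficient of $\epsilon^2$, summand by summand. The integrand splits into five pieces, three of which are essentially immediate. The kinetic term $\tfrac12|y'|_M^2$ is already quadratic in $y'$ and contributes $|\eta'|_M^2$; the term $U(s_0)|y|_M^2$ is quadratic in $y$ and contributes $2U(s_0)|\eta|_M^2$; and the residual potential $W(y)$ contributes $d^2 W(y)[\eta,\eta]$ directly, by definition of the second directional derivative (its pointwise value on a homothetic motion being computed in Lemma \ref{lem:secondW}).

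Next I would treat the $\beta$-homogeneous Lagrange multiplier term $h(|y|_M^\beta-T)$, which is present only when $T<+\infty$. Setting $g(y):=\langle My,y\rangle^{\beta/2}$, a direct differentiation gives
\[
D^2 g(y)[\eta,\eta]=\beta|y|_M^{\beta-2}|\eta|_M^2+\beta(\beta-2)|y|_M^{\beta-4}\Mprod{y}{\eta}^2.
\]
Substituting $y=\rho s$ with $|s|_M=1$ and $\Mprod{y}{\eta}=\rho\Mprod{s}{\eta}$ yields exactly $h\beta\rho^{\beta-2}[(\beta-2)\Mprod{s}{\eta}^2+|\eta|_M^2]$, which matches the last summand of \eqref{eq:secondavar}; when $T=+\infty$ and $h=0$, this piece is simply absent.

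The delicate step is the contribution of $\tfrac{c_\alpha}{2}(|y|_M')^2$. Setting $r_\epsilon:=|y+\epsilon\eta|_M$ and differentiating $r_\epsilon^2=\Mprod{y+\epsilon\eta}{y+\epsilon\eta}$ in $\tau$, one has $r_\epsilon'=r_\epsilon^{-1}\Mprod{y+\epsilon\eta}{y'+\epsilon\eta'}$. Using $\sqrt{1+u}=1+u/2-u^2/8+O(u^3)$ one obtains the expansion
\[
r_\epsilon=\rho+\epsilon\Mprod{s}{\eta}+\tfrac{\epsilon^2}{2\rho}\bigl(|\eta|_M^2-\Mprod{s}{\eta}^2\bigr)+O(\epsilon^3),
\]
and, expanding $r_\epsilon^{-1}$ accordingly and using $y=\rho s$, $y'=\rho' s+\rho s'$, I would compute $r_\epsilon'$ to order $\epsilon^2$. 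The point is that the order-$\epsilon$ coefficient of $r_\epsilon'$ collapses to $\Mprod{s}{\eta'}+\Mprod{s'}{\eta}$ after the $\rho'\Mprod{s}{\eta}/\rho$ contribution from the product is cancelled by the one coming from $r_\epsilon^{-1}$, while the order-$\epsilon^2$ coefficient equals
\[
\tfrac{1}{\rho}\bigl[\Mprod{\eta}{\eta'}-\Mprod{s}{\eta}\Mprod{s}{\eta'}-\Mprod{s}{\eta}\Mprod{s'}{\eta}\bigr]-\tfrac{\rho'}{2\rho^2}\bigl(|\eta|_M^2-\Mprod{s}{\eta}^2\bigr).
\]
Squaring, the $\epsilon^2$-coefficient of $(r_\epsilon')^2$ is the sum of the square of the first-order term (producing $\Mprod{s}{\eta'}^2+2\Mprod{s'}{\eta}\Mprod{s}{\eta'}+\Mprod{s'}{\eta}^2$) and twice $\rho'$ times the second-order term (producing the $\tfrac{\rho'}{\rho}$-bracket and, from the last summand, the $\bigl(\tfrac{\rho'}{\rho}\bigr)^2(\Mprod{s}{\eta}^2-|\eta|_M^2)$ term). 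Multiplying by $c_\alpha$ reproduces exactly the $c_\alpha$-bracket of \eqref{eq:secondavar}.

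The expected main obstacle is pure bookkeeping: the cross-terms in the expansion of $(r_\epsilon')^2$ must be grouped with care, and the apparent first-order $\rho'\Mprod{s}{\eta}/\rho$ must be seen to cancel so that the final symmetric form emerges. Note that no integration by parts intervenes, so no boundary terms arise and no derivative of $\rho$ higher than first appears; the condition $\eta\in W^{1,2}_0([0,+\infty),\X)$ is used only to place the quadratic form in its natural domain (inherited from the variational framework of Subsection \ref{subsec:var}). Summing the five contributions yields \eqref{eq:secondavar}.
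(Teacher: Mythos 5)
Your proposal is correct and follows essentially the same route as the paper: Taylor-expand the integrand of $\mathbb{J}(y+\eps\eta)$ to order $\eps^2$ and read off the coefficient summand by summand, with all the real work concentrated in expanding $\big(\Mnorm{y+\eps\eta}'\big)^2$. The only cosmetic difference is that you first expand $r_\eps=\Mnorm{y+\eps\eta}$ via $\sqrt{1+u}$ and then form $r_\eps'=\Mprod{y+\eps\eta}{y'+\eps\eta'}/r_\eps$, whereas the paper writes $\Mnorm{y+\eps\eta}'$ directly as a quotient and then applies the Maclaurin expansion of $(1+x)^{-1}$ to the denominator; both land on the same $\eps^2$-coefficient, and I checked that your intermediate expressions for the first- and second-order coefficients of $r_\eps'$, and hence for $(r_\eps')^2$, are correct. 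You also supply the short computation of $D^2\big(\Mnorm{y}^\beta\big)$ producing the $h\beta\rho^{\beta-2}[(\beta-2)\Mprod{s}{\eta}^2+\Mnorm{\eta}^2]$ term, which the paper leaves implicit; that is a small but genuine bonus. One minor point worth keeping in mind when writing this up: like the paper, you should note (even in one line) that the expansion is first performed for $\eta\in\mathscr C^\infty_c$ and then extended to $\eta\in W^{1,2}_0$ by density and dominated convergence, since that is what justifies the final statement on the full space.
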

\begin{proof}
In order to prove the result we first af all compute the second G\^ateaux derivative of $\mathbb{J}$ in a direction $\eta \in \mathscr{C}^\infty_0
(0,+\infty;\X)$, hence, by density and using dominated convergence theorem (all terms in $\mathbb{J}$ are bounded) we compute it in any direction $\eta \in W^{1,2}_0([0,+\infty),\X)$.

The computation is straightforward 
although some care for the  first term in Formula \eqref{eq:actioniny}. Since 
\[
\Mnorm{y+\eps\eta} = \Mnorm{\rho s + \eps\eta}
=\sqrt{\rho^2 + 2\eps \rho \Mprod{s}{\eta} + \eps^2 \Mnorm{\eta}^2 },
\]
hence 
\[
\Mnorm{y+\eps\eta}'
= \dfrac{\rho\rho'+\eps \left(\rho'\Mprod{s}{\eta} + \rho \Mprod{s'}{\eta} + 
\rho \Mprod{s}{\eta'}\right) + \eps^2 \Mprod{\eta}{\eta'}}
{\sqrt{\rho^2 + 2\eps \rho \Mprod{s}{\eta} + \eps^2\Mnorm{\eta}^2}},
\]
and 
\[
\left(\Mnorm{y+\eps\eta}'\right)^2
= \dfrac{\left[\rho'+\eps \left(\dfrac{\rho'}{\rho} \Mprod{s}{\eta} + 
\Mprod{s'}{\eta} + \Mprod{s}{\eta'}\right) + 
\eps^2 \dfrac{\Mprod{\eta}{\eta'}}{\rho}\right]^2}{1 + 2\eps 
\dfrac{\Mprod{s}{\eta}}{\rho} + \eps^2\dfrac{\Mnorm{\eta}^2}{\rho^2} }.
\]
We now need to compute the second derivative of this expression with respect to 
$\eps$ and then make the limit as  $\eps \to 0$. We can then  use the McLaurin 
expansion of $(1+x)^{-1}$ as $x \to 0$,
obtaining (we just consider the terms containing $\eps$ up to the power 2),
\begin{multline*}
\left(\Mnorm{y+\eps\eta}'\right)^2 = 
\left[(\rho')^2 + 2\eps\rho'\left(\dfrac{\rho'}{\rho} \Mprod{s}{\eta} + 
\Mprod{s'}{\eta} + \Mprod{s}{\eta'}\right) \right. +\\
+ \left. \eps^2\left(\dfrac{\rho'}{\rho} \Mprod{s}{\eta} + \Mprod{s'}{\eta} + 
\Mprod{s}{\eta'}\right)^2\right. +\\
      + \left. 2\eps^2 \dfrac{\rho'}{\rho} \Mprod{\eta}{\eta'} + o(\eps^2) 
\right] 
\left[1 - 2\eps \dfrac{\Mprod{s}{\eta}}{\rho} + \eps^2\frac{4\Mprod{s}{\eta}^2 
- \Mnorm{\eta}^2}{\rho^2} + o(\eps^2) \right], \quad \eps \to 0.
\end{multline*}
When we compute the second derivative with respect to $\eps$ of the previous
expression and we evaluate it as $\eps=0$, the only term that will not vanish 
is 
the one with 
$\eps^2$; for this reason we just compute the coefficient of $\eps^2$ in the 
previous expression, that is
\[
\begin{split}
- \left(\dfrac{\rho'}{\rho}\right)^2\Mnorm{\eta}^2
+ 4\left(\dfrac{\rho'}{\rho}\right)^2\Mprod{s}{\eta}^2
+ \left(\dfrac{\rho'}{\rho} \Mprod{s}{\eta} + \Mprod{s'}{\eta} + 
\Mprod{s}{\eta'}\right)^2
+ 2\dfrac{\rho'}{\rho} \Mprod{\eta}{\eta'} \\
- 4\dfrac{\rho'}{\rho} \Mprod{s}{\eta} \left(\frac{\rho'}{\rho} \Mprod{s}{\eta} 
+ \Mprod{s'}{\eta} + \Mprod{s}{\eta'}\right) \\
= \Mprod{s'}{\eta}^2 + \Mprod{s}{\eta'}^2
        + 2\frac{\rho'}{\rho}\left[ 
\Mprod{\eta}{\eta'}-\Mprod{s}{\eta}\Mprod{s'}{\eta}-\Mprod{s}{\eta}\Mprod{s}{
\eta'} 
        \right]+\\ \left(\dfrac{\rho'}{\rho}\right)^2(\Mprod{s}{\eta}^2
-\Mnorm{\eta}^2)
        +2\Mprod{s'}{\eta}\Mprod{s}{\eta'}.
\end{split}
\]
We are now able to the deduce the expression of the thesis.
\end{proof}
The next result follows straightforwardly from Remark \ref{rem:hom_rho}, Lemma \ref{lem:secondW} and Proposition \ref{prop:var_seconda}.
{
\begin{cor}\label{cor:secondvarhom}
Let $y^*(\tau)=\rho_0(\tau)s_0$ be an $s_0$-homothetic parabolic motion. Then the quadratic form associated to the second variation of $\mathbb{J}$ at $y^*$ is 
\begin{equation}\label{eq:secondavarhom_eta}
\begin{split}
d^2 \mathbb{J}(y^*)[\eta,\eta] =  \int_0^{+\infty} 
& \Big\{ c_\alpha
        \Big[ \overline\delta_\alpha^2(\Mprod{s_0}{\eta}^2-\Mnorm{\eta}^2)
        +\Mprod{s_0}{\eta'}^2 + 2\overline\delta_\alpha \left( \Mprod{\eta}{\eta'} 
\right. \\
&\left. -\Mprod{s_0}{\eta}\Mprod{s_0}{\eta'}\right) \Big]
        + \Mnorm{\eta'}^2 + 2\pothom(s_0)\Mnorm{\eta}^2 + d^2\widetilde U(s_0)[\eta,\eta] \Big\}\, d\tau
\end{split}
\end{equation}
for every  $\eta \in W^{1,2}_0([0,+\infty),\X)$.
\end{cor}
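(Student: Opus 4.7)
The plan is to derive the corollary by a direct specialization of Proposition \ref{prop:var_seconda}, using Remark \ref{rem:hom_rho} to pin down the radial terms, Lemma \ref{lem:secondW} to rewrite the $d^2 W$ contribution, and the fact that parabolic motions have vanishing energy to kill the multiplier term. No independent computation is required: the corollary is essentially a bookkeeping consequence of the three items cited immediately before it.

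Concretely, along the homothetic trajectory $y^{*}(\tau)=\rho_0(\tau)s_0$ the angular component is identically $s_0$, so $s'(\tau)\equiv 0$. Substituting this into \eqref{eq:secondavar} annihilates every summand containing $\Mprod{s'}{\eta}$, namely $\Mprod{s'}{\eta}^2$, $2(\rho'/\rho)\Mprod{s}{\eta}\Mprod{s'}{\eta}$ and $2\Mprod{s'}{\eta}\Mprod{s}{\eta'}$. Next, by Remark \ref{rem:hom_rho}, the logarithmic derivative $\rho_0'/\rho_0$ is the \emph{constant} $\overline\delta_\alpha$, so that $(\rho'/\rho)^2$ becomes $\overline\delta_\alpha^2$ and $2(\rho'/\rho)$ becomes $2\overline\delta_\alpha$; this produces exactly the $c_\alpha$-bracket displayed in \eqref{eq:secondavarhom_eta}. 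Lemma \ref{lem:secondW} then identifies the residual term $d^2 W(y^{*})[\eta,\eta]$ with $d^2\widetilde U(s_0)[\eta,\eta]$.

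Finally, since $y^{*}$ is parabolic, its energy vanishes ($h=0$), a fact already recorded in Subsection \ref{subsec:time1cs} via Chenciner's convexity argument on the moment of inertia. Consequently the multiplier contribution $h\beta\rho^{\beta-2}\bigl[(\beta-2)\Mprod{s}{\eta}^2+\Mnorm{\eta}^2\bigr]$ in \eqref{eq:secondavar} drops out, and assembling the four simplifications produces \eqref{eq:secondavarhom_eta} verbatim. The only points worth checking carefully are (i) that all $s'$-free cross terms have been retained with the correct signs, and (ii) that the test space $W^{1,2}_0([0,+\infty),\X)$ is the same as in Proposition \ref{prop:var_seconda}, so that no extension-by-density argument is needed at this stage; both verifications are immediate. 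I do not foresee any genuine obstacle in this deduction.
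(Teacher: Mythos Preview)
Your proposal is correct and follows exactly the route the paper indicates: the corollary is stated there as following ``straightforwardly from Remark \ref{rem:hom_rho}, Lemma \ref{lem:secondW} and Proposition \ref{prop:var_seconda}'', with no further argument given. Your write-up supplies precisely the bookkeeping the paper omits (setting $s'\equiv 0$, $\rho_0'/\rho_0\equiv\overline\delta_\alpha$, $h=0$, and invoking Lemma \ref{lem:secondW}), so nothing is missing.
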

}

\section{Spectral index and geometrical index for  asymptotic motions}
\label{sec:fredholmness}

This section splits into two subsections. In Subsection 
\ref{subsec:spectral-index} we associate to 
an $s_0$-asymptotic solution a Morse-type index that we shall refer to as the {\em spectral index\/} whilst in Subsection \ref{subsec:geometrical-index} we introduce the second main 
object of our theory which is the {\em geometrical index\/} defined through the Maslov index. 

\subsection{The spectral index}\label{subsec:spectral-index}
%
We denote by $u\otimesm v$ the tensor product of the two vectors $u$ and $v$ 
made with respect to the mass scalar product, i.e. for any vector $\eta$, we 
obtain $(u \otimesm v)\eta= \Mprod{v}{\eta}u$.
We also observe  that $\trasp{(u \otimesm v)}=(v \otimesm u)$.

Given an $s_0$-asymptotic solution $y$, let us introduce the endomorphisms
\begin{equation}\label{eq:matrici}
\begin{split}
 P(\tau) & \= c_\alpha s \otimesm s+ I  \\
 Q(\tau) & \= c_\alpha \left(\dfrac{\rho'}{\rho}\right)( I - s \otimesm s) + 
c_\alpha s\otimesm s',
\\
 R(\tau) & \=  \left[c_\alpha \left(\dfrac{\rho'}{\rho}\right)^2 + 
\beta(\beta-2) h \rho^{\beta-2} \right] s \otimesm s + \left[ 2 \pothom(s_0)  - 
c_\alpha \left(\dfrac{\rho'}{\rho}\right)^2+ 
\beta h \rho^{\beta-2} \right] I \\ 
& + c_\alpha s'\otimesm s' - c_\alpha \left(\dfrac{\rho'}{\rho}\right) 
\left(s\otimesm s' + s'\otimesm s\right).
\end{split}
\end{equation}
\begin{defn}\label{def:index_quadr_AS}
Given an $s_0$-a.s. $y$, we define the {\em $s_0$-asymptotic index form of $y$}, 
$\IAS$, and the {\em $s_0$-asymptotic quadratic form of $y$\/}, $\qAS$, 
respectively as the bilinear and the quadratic forms associated to $d^2 
\mathbb{J}(y)$ (computed in Proposition \ref{prop:var_seconda}), namely
\begin{equation}
\label{eq:IAS}
\IAS(\xi,\eta) :=
\int_0^{+\infty}\left[
\Mprod{ P(\tau)\xi'}{\eta'}+ 
\Mprod{ Q(\tau)\xi}{\eta'}+
\Mprod{\trasp{Q}(\tau)\xi'}{\eta} + \Mprod{\widetilde { R}(\tau)\xi}{\eta}
\right]\, d\tau,
\end{equation}
and
\begin{equation}\label{eq:qAS}
\qAS(\eta) \= \IAS(\eta,\eta),
\end{equation}
for any $\xi,\eta \in W^{1,2}_0\left([0,+\infty),\X\right)$, where 
\[
\widetilde {R}(\tau) =  R(\tau) + D^2W(y).
\]
\end{defn}
{
When $y$ is an $s_0$-homothetic motion $s(\tau)=s_0$ for any $\tau$ so that the matrix $P$ reduces to a constant matrix, while $Q$ and $R$ strongly simplify (since $s'=0$). When we compute these matrices for an $s_0$-homothetic parabolic motion then, since $\dfrac{\rho'(\tau)}{\rho(\tau)} = \bar{\delta}_{\alpha}$ (cf. Remark \ref{rem:hom_rho}) and $h=0$, we obtain
\begin{equation}\label{eq:matrici_0}
\begin{split}
P_{0} &\= c_\alpha s_0 \otimesm s_0+I, \\
Q_{0} &\= c_\alpha\bar\delta_\alpha( I -s_0 \otimesm s_0)\\
R_{0} &\=  c_\alpha \bar \delta_\alpha^2 s_0 \otimesm  s_0 + \left( 2 \pothom(s_0)  - c_\alpha \bar \delta_\alpha^2\right)I \\
&= c_\alpha \bar \delta_\alpha^2  s_0 \otimesm  s_0 +  
\frac{(2-\alpha)^2}{8}\pothom(s_0)  I. 
\end{split}
\end{equation}
By virtue of Corollary \ref{cor:secondvarhom} and by performing 
an integration by part ($Q_0$ is now a constant matrix), we are allowed to give the following definition.
\begin{defn}\label{def:index_quadr_lim}
	Given an $s_0$-homothetic parabolic motion, we  define the {\em $s_0$-limit index form\/}, $\IAS_0$, and the {\em  $s_0$-limit quadratic form\/}, $\qAS_0$, respectively as the bilinear and the quadratic form associated to $d^2 \mathbb{J}(y_0)$ (computed in Corollary \ref{cor:secondvarhom}), namely
	\begin{equation}
	\label{eq:secondavar2_0}
	\Ilim (\xi,\eta):= \int_0^{+\infty} 
	\left[\Mprod{ P_0 \xi'}{\eta'}+ \Mprod{\widetilde{R_0}\xi}{\eta} 
	\right] \, d\tau,
	\end{equation}
	and
	\[
	\qlim(\eta) := \Ilim(\eta,\eta),
	\]
	for any $\xi,\eta \in W^{1,2}_0\left([0,+\infty),\X\right)$,
	where
	\begin{equation}\label{eq:R0tilde}
	\widetilde {R_0} \= R_{0}+{D^2\widetilde U(s_0)}
	\end{equation} 
\end{defn}
We denote by $\Sym{k}$ the space of all $k\times k$ symmetric matrices.  
Let $D:[0,+\infty) \to \Sym{2N}$ be the continuous path pointwise defined by 
\begin{equation}\label{eq:D}
D(\tau)\=\begin{bmatrix}
P(\tau) &  Q(\tau)\\   
\trasp{ Q}(\tau)&  R(\tau)
\end{bmatrix}.
\end{equation}
Next lemma describes the asymptotic behaviour of the matrices $R(\tau)$, $Q(\tau)$ and $P(\tau)$.
\begin{lem}\label{lem:conv_matrici}
	For every $\varepsilon >0$, there exists $T >0$ such that 
	\[
	\sup_{\tau \in [T, +\infty)}|D(\tau)-D_0| \leq \varepsilon,
	\]
	where
	\begin{equation}\label{eq:limiting}
	D_0\=\begin{bmatrix}
	P_0 &  Q_0 \\  \trasp{Q}_0 &  R_0
	\end{bmatrix} \in \Sym{2N}
	\end{equation}
	{and where $|\cdot|$ denotes the norm induced by $M$.}
\end{lem}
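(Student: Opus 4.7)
The plan is to show that each block in $D(\tau)$ converges to the corresponding block in $D_0$ as $\tau\to+\infty$; since $D(\tau)$ and $D_0$ both lie in the finite-dimensional space $\Sym{2N}$, pointwise convergence at infinity is equivalent to the $\varepsilon$--$T$ uniform estimate in the statement. All the necessary ingredients have already been assembled above.

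By Definition \ref{def:s_0-as} and the fact that the time reparametrisation $\tau=\tau(t)$ defined in \eqref{eq:tau} is strictly increasing with $\tau(t)\to+\infty$ as $t\to T^-$, the angular component satisfies $s(\tau)\to s_0$. Moreover, \eqref{eq:stima_s'} yields $\Mnorm{s'(\tau)}\to 0$, and \eqref{eq:stima_rho'/rho} gives $\rho'(\tau)/\rho(\tau)\to\overline\delta_\alpha$. Since the map $(u,v)\mapsto u\otimesm v$ is continuous and bilinear, and since the scalar $\rho'/\rho$ stays bounded near $+\infty$, one simply substitutes these limiting values into the formulas \eqref{eq:matrici}: this immediately gives $P(\tau)\to c_\alpha\,s_0\otimesm s_0+I=P_0$ and $Q(\tau)\to c_\alpha\overline\delta_\alpha(I-s_0\otimesm s_0)+c_\alpha\,s_0\otimesm 0=Q_0$. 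The analogous substitution in $R(\tau)$ kills the two summands carrying a factor of $s'$ (namely $c_\alpha s'\otimesm s'$ and $c_\alpha(\rho'/\rho)(s\otimesm s'+s'\otimesm s)$) and, provided also the energy contribution $h\rho^{\beta-2}$ vanishes in the limit, leaves exactly the expression for $R_0$ in \eqref{eq:matrici_0}.

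This last point is the only one deserving a separate argument, but it is immediate in both cases. If $T=+\infty$ (parabolic motion), the total energy vanishes, $h=0$, and the term is identically zero. If $T<+\infty$ (collision motion), then $\beta-2=\dfrac{4\alpha}{2-\alpha}>0$ while $\rho(\tau)=r(t(\tau))^{(2-\alpha)/4}\to 0$, so $h\rho^{\beta-2}\to 0$ regardless of the (finite) value of $h$. Hence both the coefficient of $s\otimesm s$ in the first bracket of $R(\tau)$ and the coefficient of $I$ in the second bracket converge, respectively, to $c_\alpha\overline\delta_\alpha^2$ and $2\pot(s_0)-c_\alpha\overline\delta_\alpha^2$, matching \eqref{eq:matrici_0}.

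No serious obstacle is anticipated: the lemma is a direct consequence of the asymptotic estimates of Lemma \ref{thm:SundmanSperling} together with the continuity of the algebraic operations appearing in \eqref{eq:matrici}. The only mildly subtle step is that the $s_0$-asymptotic condition is phrased in the original time $t$, but it transports without loss to the new time $\tau$ through the monotone change \eqref{eq:tau}; and the combination ``$\beta-2>0$ plus $\rho\to 0$'' disposes of the single coefficient ($h\rho^{\beta-2}$) that could otherwise fail to admit a limit.
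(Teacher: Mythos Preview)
Your proof is correct and follows the same approach as the paper's: substitute the limits \eqref{eq:stima_rho'/rho}, \eqref{eq:stima_s'}, and $s(\tau)\to s_0$ into the explicit blocks \eqref{eq:matrici} of $D(\tau)$. The paper's proof additionally records that $D^2W(y(\tau))\to D^2\widetilde U(s_0)$ (via Lemma~\ref{lem:secondW}), a fact not needed for $D(\tau)$ as stated (which carries $R(\tau)$, not $\widetilde R(\tau)$) but used downstream when $\widetilde R(\tau)\to\widetilde R_0$ is invoked; conversely, your explicit disposal of the term $h\rho^{\beta-2}$ is a detail the paper leaves implicit.
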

\begin{proof}
	The proof of this result readily follows by taking into account the limits in Equations \eqref{eq:stima_rho'/rho} and \eqref{eq:stima_s'}, the assumption that $s(\tau)$ tends to $s_0$, and the fact that $D^2W(y(\tau))$ converges to $D^2W(y_0) = D^2\widetilde \pot(s_0)$ (by Lemma \ref{lem:secondW}), as $\tau \to +\infty$.
\end{proof}
%

Next result gives some useful spectral properties of the matrices $P_0$ and $ R_0$.
\begin{lem}\label{lem:spectra}
The matrices $P_0$, $R_0$ and $ P(\tau)$, $\tau \in [0,+\infty)$, are positive definite and their spectrum is  
\[
\mathfrak{sp}( P_0)=\mathfrak{sp}( P(\tau)) = \{1, 1+c_\alpha\} \qquad \text{for any } \tau \in [0,+\infty),
\]
and 
\[
\mathfrak{sp}( R_{0}) = \left\{ r^1_{0},r^2_{0}\right\}
\]
where 
\[
r^1_{0} \= 2\pothom(s_0) - c_\alpha \bar\delta_\alpha^2 
= \dfrac{(2-\alpha)^2}{8}\pothom(s_0)
\;\text{ and }\;
r^2_{0} \= 2\pothom(s_0) = \dfrac{(2-\alpha)^2}{8}\pothom(s_0) 
+ c_\alpha \bar\delta_\alpha^2.
\]
Furthermore, the eigenspace corresponding to $1 \in \mathfrak{sp}( P_0)$ and the one correspondig to $r^1_{0} \in \mathfrak{sp}(R_0)$ is $s_0^{\perp_M}$, 
while the one corresponding to $1+c_\alpha \in \mathfrak{sp}( P_0)$ and to $r^2_{0} \in \mathfrak{sp}(R_0)$ is the vector space spanned by $s_0$.
\end{lem}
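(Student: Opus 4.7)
My proof plan is to exploit the fact that, since $|s|_M=|s_0|_M=1$, the rank-one endomorphisms $s\otimes_M s$ and $s_0\otimes_M s_0$ are precisely the $M$-orthogonal projections onto the lines they span. The ambient space $\mathcal{X}$ then splits $M$-orthogonally as $\gen(s_0)\oplus s_0^{\perp_M}$ (respectively $\gen(s)\oplus s^{\perp_M}$), and all three matrices will be block-diagonal in this decomposition. After checking this, the spectrum can be read off by evaluating on a unit vector in each block.

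First I would treat $P_0$ and $P(\tau)$ simultaneously, since they have identical structure (just replace $s_0$ with $s(\tau)$). For $\eta=s$, one computes $(s\otimes_M s)s=\Mprod{s}{s}s=s$, so $P(\tau)s=(c_\alpha+1)s$; for $\eta\perp_M s$, $(s\otimes_M s)\eta=0$, so $P(\tau)\eta=\eta$. Hence $\mathfrak{sp}(P(\tau))=\{1,1+c_\alpha\}$, with eigenspaces $s^{\perp_M}$ and $\gen(s)$ respectively. Since $\alpha\in(0,2)$ implies $c_\alpha=\bigl(\tfrac{4}{2-\alpha}\bigr)^2-1>0$, both eigenvalues are strictly positive, giving positive definiteness. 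The statement for $P_0$ follows verbatim with $s_0$ in place of $s$.

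Next I would handle $R_0$. From the explicit form in \eqref{eq:matrici_0},
\[
R_0=c_\alpha\bar\delta_\alpha^2\,s_0\otimes_M s_0+\bigl(2\pot(s_0)-c_\alpha\bar\delta_\alpha^2\bigr)I,
\]
the same decomposition gives $R_0 s_0=2\pot(s_0)\,s_0$ and $R_0\eta=(2\pot(s_0)-c_\alpha\bar\delta_\alpha^2)\eta$ for $\eta\perp_M s_0$. Invoking the identity $c_\alpha\bar\delta_\alpha^2=2\pot(s_0)-\tfrac{(2-\alpha)^2}{8}\pot(s_0)$ recorded in the list of constants at the end of the Introduction, the second eigenvalue becomes $\tfrac{(2-\alpha)^2}{8}\pot(s_0)=r^1_0$, while the first equals $r^1_0+c_\alpha\bar\delta_\alpha^2=r^2_0$; the eigenspace assignment for $s_0^{\perp_M}$ and $\gen(s_0)$ is then as claimed. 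Positive definiteness follows from $\pot(s_0)>0$ and $(2-\alpha)^2>0$.

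There is no real obstacle here; the lemma is essentially a bookkeeping verification once one recognizes the projector structure of $s\otimes_M s$ and applies the precomputed constant $c_\alpha\bar\delta_\alpha^2$. The only point that merits a brief remark is that the invariance of $\mathfrak{sp}(P(\tau))$ along $\tau$ is automatic: for \emph{every} $\tau$, $s(\tau)$ is a unit vector in the mass metric, so the rank-one summand $c_\alpha\,s(\tau)\otimes_M s(\tau)$ is always the projection onto a line, independently of the specific direction, and the spectrum of $P(\tau)$ is the same $\{1,1+c_\alpha\}$ throughout.
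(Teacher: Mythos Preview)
Your proposal is correct and follows essentially the same approach as the paper: both arguments hinge on the observation that for a unit vector $w$ (in the mass norm) the operator $w\otimes_M w$ is the rank-one $M$-orthogonal projector onto $\gen(w)$, so that $c\,w\otimes_M w + dI$ is diagonal in the splitting $\gen(w)\oplus w^{\perp_M}$ with eigenvalues $c+d$ and $d$. The paper carries out exactly this computation, then uses the explicit values of $c_\alpha$ and $\bar\delta_\alpha^2$ to simplify $2\pot(s_0)-c_\alpha\bar\delta_\alpha^2$ to $\tfrac{(2-\alpha)^2}{8}\pot(s_0)$, just as you do via the constants list.
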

\begin{proof}
For any vector $w$, $w\otimesm w$ is a rank 1 symmetric matrix, indeed, for any vector $v$
\begin{equation}\label{eq:contofava}
(w \otimesm w)v = \Mprod{w}{v} w,
\end{equation}
moreover the kernel of $w \otimesm w$ is $\{v:\Mprod{v}{w}=0\}$ and the only non vanishing eigenvalue is $\Mnorm{w}^2$, indeed 
\[
(w \otimesm w)w = \Mprod{w}{w} w = \Mnorm{w}^2 w.
\]
It is readily seen that the spectrum of $P(\tau)$ does not depend on $\tau$ and is $\{1, 1+c_\alpha\}$; we immediately conclude that $P(\tau)$ is positive definite and that the corresponding eigenspaces are $s_0^{\perp_M}$ and $\R s_0$. 

In order to determine the spectrum of $R_{0}$, we simply recall that 
$c_\alpha = 
\left(\frac{4}{2-\alpha}\right)^2-1$ and $\bar \delta_\alpha^2 = 
\frac{(2-\alpha)^2}{8}\pothom(s_0)$, so that
\[
2\pothom(s_0) - c_\alpha \bar \delta_\alpha^2 = 
\frac{(2-\alpha)^2}{8}\pothom(s_0).
\]
The conclusion follows by arguing in the very same way as before. This conclude the proof. 
\end{proof}
The following condition will play a central role in the rest of the paper:
\begin{center}
{\bf \BS : \, $\widetilde {R_0}$ is positive definite.\/}
\end{center}
Let us define 
\[
\mu_1 \= \text{the smallest eigenvalue of } {D^2\widetilde U(s_0)}.
\]
By taking into account Lemma \ref{lem:spectra}, the definition of the matrix $\widetilde{R_0}$, and the fact that the kernel of ${D^2\widetilde U(s_0)}$ coincides with $\R s_0$, we have that
\begin{center}
\BS \quad $\Longleftrightarrow$ \quad $\mu_1 +\dfrac{(2-\alpha)^2}8{\pot(s_0)} >0.$ 
\end{center}
}
In fact, the two eigenvalues of the endomorphism $R_0$ are ordered, being 
\[
r^1_{0}=\dfrac{(2-\alpha)^2}{8}\pothom(s_0)
\leq 
r^2_{0} = \dfrac{(2-\alpha)^2}{8}\pothom(s_0) 
+ c_\alpha \bar\delta_\alpha^2.
\]
Thus, in particular, if $\mu_1 + r_0^1 >0$ then also $\mu_1+ r_0^2$ is positive and thus $\widetilde R_0$ is positive definite.
{
\begin{lem}\label{lem:ess-pos-limit}
Let {$\qlim$} be the quadratic form associated to an $s_0$-homothetic parabolic motion. Then: 
\begin{enumerate}
	\item $\qlim$ is an equivalent norm in $W^{1,2}_0([0,+\infty);\X)$ if and only if $s_0$ satisfies \BS;
	\item $\qlim(\eta) \geq 0$ for any  $\eta \in W^{1,2}_0([0,+\infty);\X)$ if and only if $\mu_1 \geq -\frac{(2-\alpha)^2}{8}U(s_0)$;
\end{enumerate}
\end{lem}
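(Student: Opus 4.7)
The plan is to read off both assertions from the representation
\[
\qlim(\eta)=\int_0^{+\infty}\Bigl[\Mprod{P_0\eta'}{\eta'}+\Mprod{\widetilde R_0\eta}{\eta}\Bigr]\,d\tau
\]
of Definition \ref{def:index_quadr_lim}, combined with Lemma \ref{lem:spectra}, which tells us that $P_0$ is already positive definite, so that $\Mprod{P_0\eta'}{\eta'}$ is equivalent to $\|\eta'\|_{L^2}^2$; consequently the whole matter reduces to the sign of $\widetilde R_0$. Here I would first pin down the spectrum of $\widetilde R_0=R_0+D^2\widetilde\pot(s_0)$. Both summands respect the $M$-orthogonal splitting $\X=\R s_0\oplus s_0^{\perp_M}$: for $R_0$ this is Lemma \ref{lem:spectra}, whilst for $D^2\widetilde\pot(s_0)$ it follows from the $0$-homogeneity of $\widetilde\pot$ together with the fact that $s_0$ is a critical point of $\widetilde\pot$ (equivalent to the central configuration equation), differentiating Euler's identity $\langle x,\nabla\widetilde\pot(x)\rangle=0$ to get $D^2\widetilde\pot(s_0)\,s_0=-\nabla\widetilde\pot(s_0)=0$. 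Since $R_0|_{s_0^{\perp_M}}=r_0^1\,\Id$ is scalar, the two endomorphisms are simultaneously diagonalisable and the spectrum of $\widetilde R_0$ is $\{r_0^2\}\cup\{r_0^1+\lambda:\lambda\in\mathfrak{sp}(D^2\widetilde\pot(s_0)|_{s_0^{\perp_M}})\}$; its smallest eigenvalue equals $\mu_1+r_0^1=\mu_1+\tfrac{(2-\alpha)^2}{8}\pot(s_0)$.

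For the ``if'' parts, positive (resp.\ non-negative) definiteness of $\widetilde R_0$ immediately yields $\qlim(\eta)\geq c\,\|\eta'\|_{L^2}^2+c'\,\|\eta\|_{L^2}^2\geq c''\,\|\eta\|_{W^{1,2}}^2$ (resp.\ $\qlim(\eta)\geq 0$), while the upper bound $\qlim(\eta)\leq C\,\|\eta\|_{W^{1,2}}^2$ is immediate from boundedness of $P_0$ and $\widetilde R_0$; this gives norm equivalence in (1) and non-negativity in (2). For the converses I argue by contraposition: if the pertinent inequality on $\mu_1+r_0^1$ fails, I pick an $M$-unit eigenvector $v\in s_0^{\perp_M}$ of $\widetilde R_0$ with eigenvalue $\mu_1+r_0^1\leq 0$ (resp.\ $<0$), and test against $\eta_n(\tau)=\phi_n(\tau)\,v$, where $\phi_n\in W^{1,2}_0([0,+\infty);\R)$ is the trapezoidal plateau of height $1$ supported on $[0,n+1]$, rising linearly from $0$ to $1$ on $[0,1]$, constant equal to $1$ on $[1,n]$ and falling linearly back to $0$ on $[n,n+1]$; consequently $\|\phi_n\|_{L^2}^2\sim n$ while $\|\phi_n'\|_{L^2}^2=2$. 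A direct computation gives
\[
\qlim(\eta_n)=\|\phi_n'\|_{L^2}^2\,\Mprod{P_0v}{v}+(\mu_1+r_0^1)\,\|\phi_n\|_{L^2}^2,
\]
which in case (1) stays bounded while $\|\eta_n\|_{W^{1,2}}^2\to\infty$, breaking two-sided norm equivalence with $\|\cdot\|_{W^{1,2}}$, and in case (2) tends to $-\infty$, contradicting $\qlim\geq 0$.

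The only step requiring some care is the simultaneous diagonalisation of $R_0$ and $D^2\widetilde\pot(s_0)$ used to identify the smallest eigenvalue of $\widetilde R_0$ as $\mu_1+r_0^1$; once this is in place, both parts of the lemma reduce to the standard coercivity/test-function dichotomy sketched above.
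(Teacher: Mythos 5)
Your proposal is correct and follows essentially the same route as the paper: reduce the question to the sign of $\widetilde{R_0}$, use the spectral splitting $\X=\R s_0\oplus s_0^{\perp_M}$ (Lemma \ref{lem:spectra} together with $D^2\widetilde\pot(s_0)s_0=0$) to identify the relevant eigenvalue $\mu_1+r_0^1$, and then supply slowly-spreading test functions aligned with the offending eigendirection to defeat the converse. The only difference is that where the paper delegates the converse construction to \cite[Theorem 4.3]{BS08}, you make it self-contained with the trapezoidal plateaus $\phi_n$; one small imprecision to flag is that ``the smallest eigenvalue of $\widetilde{R_0}$ equals $\mu_1+r_0^1$'' is literally true only when $\mu_1\leq 0$ (if $\mu_1>0$ the minimum of $\widetilde{R_0}$ need not equal $\mu_1+r_0^1$), but since $r_0^1>0$ that regime is irrelevant to both implications, so the argument goes through.
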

\begin{proof}
We start by proving claim \emph{(1)}.\\
Let us assume that the inequality \BS\, holds; then, by virtue of Lemma \ref{lem:spectra}, there exists $\delta>0$ such that for any $\eta \in W^{1,2}_0([0,+\infty);\X)$
\[
\int_0^{+\infty} \Mprod{ P_0 \eta'}{\eta'} d\tau
\geq \norm{\eta'}_{L^{2}(0,+\infty)}^2
\quad \text{and} \quad 
\int_0^{+\infty} 
\Mprod{\widetilde R_0\eta}{\eta}d\tau \geq \delta \norm{\eta}_{L^{2}(0,+\infty)}^2,
\]
hence
\begin{equation}\label{eq:1--1}
\qlim (\eta) \geq 
\min(1,\delta) \norm{\eta}_{W^{1,2}_0([0,+\infty))}^2, 
\qquad \forall \eta \in W^{1,2}_0([0,+\infty);\X).
\end{equation}
In order to prove the {\em only if} part we argue by contradiction, assuming that 
\[
\mu_1 \leq  -\frac{(2-\alpha)^2}{8} U(s_0).
\]
Let $w_1$ be such that $\Mprod{\widetilde R_0 w_1}{w_1} = \left( \mu_1 + \frac{(2-\alpha)^2}{8} U(s_0)\right) \Mnorm{w_1}^2$.
First of all we assume that the strict inequality holds; then, by taking advantage of the presence of the eigendirection $w_1$,
we can show (arguing as in the proof of \cite[Theorem 4.3]{BS08}) the existence of infinitely many $\xi_n$ such that {$\qlim(\xi_n)\leq 0$}. Hence $\qlim$ is not a norm. \\
Finally, if the equality 
$\mu_1 = -\frac{(2-\alpha)^2}{8} U(s_0)$ holds, then for any $\xi \in W^{1,2}_0([0,+\infty);\X)$ such that $\xi(\tau)/\Mnorm{\xi(\tau)}={w_1}$ for any $\tau$, there holds 
\[
\qlim(\xi) = \int_0^{+\infty}\Mnorm{\xi'}^2
\]
which means that $\qlim$ is not an equivalent norm in $W^{1,2}_0([0,+\infty);\X)$. \\
The proof of claim \emph{(2)} is substantially the same. In fact, if the inequality on the first eigenvalue of $D^2\widetilde U(s_0)$ holds, then $\qlim(\eta)\geq 0$ for any $\eta \in W^{1,2}_0([0,+\infty);\X)$. If the equality holds, then the smallest eigenvalue of the matrix $\widetilde R_0$ vanishes, so that $\qlim(\eta) = \int_0^{+\infty}\Mnorm{\eta'}^2 \geq 0$, for any $\eta \in W^{1,2}_0([0,+\infty);\X)$. In order to prove the \emph{only if} part we argue by contradiction assuming that $\mu_1 < -\frac{(2-\alpha)^2}{8} U(s_0)$ and showing the existence of infinitely many $\xi_n$ such that {$\qlim(\xi_n)\leq 0$}.
\end{proof}
In the next definition we introduce the {\em spectral index} of an $s_0$-a.s. $y$: this is nothing but the Morse index of 
the motion $y$, seen as a critical point (in the smooth sense described in Lemmata \ref{le:weak_sol} and \ref{thm:azione-finita}) of the functional $\mathbb{J}$ introduced at page 
\pageref{eq:W}; for this reason we will often  refer to it as the {\em Morse index of $y$\/}. 
\begin{defn}\label{def:spectral-index}
Let $s_0$ be a central configuration and $y$ be an $s_0$-a.s.\,. We  define the {\em spectral index\/ of $y$} as 
\[
\ispec(y) \= \dim E_-(\qAS) \in \N \cup \{+\infty\},
\]
where $E_-(\qAS)$ denotes the negative spectral space of one (and hence all) 
realisation of $\qAS$ (cf. Appendix \ref{sec:spectral-flow-and-Maslov} for 
further details on the well-posedness of this number).
\end{defn}
\begin{rem}
Let us observe that the variable changes introduced in Subsection \ref{subsec:McGehee} ensures that, when $T=+\infty$ the Morse index of $y$ coincides with the Morse index of the corresponding a.s. $\gamma$ (seen as a critical point on $\SSS$). When $T<+\infty$, the difference between these two indices is at most one: this is due to the presence of the 1-codimensional  constraint given in Equation \eqref{eq:infiniteconstraint}.
\end{rem}
Given an $s_0$-homothetic parabolic motion $y^*$, in Lemma \ref{lem:spectra} we finally proved that 
\[
\ispec(y^*) < +\infty \; \text{(and indeed vanishes)}
\qquad \text{if and only if} \qquad
\mu_1 \geq -\frac{(2-\alpha)^2}{8}U(s_0).
\]

In the next result we deal with a generic $s_0$-a.s. and we prove that \BS-condition is indeed sufficient for the finiteness of its spectral index.
\begin{thm}\label{prop:key1-una freccia}
Let $s_0$ be a central configuration and let $y$ be an $s_0$-a.s..
If $s_0$ satisfies the \BS-condition then
\[
\ispec(y) < +\infty.
\]
\end{thm}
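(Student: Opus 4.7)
The strategy is to realise the quadratic form $\qAS$ as the form of a self-adjoint, semi-bounded operator $\mathcal A$ on $L^2([0,+\infty),\X)$ via the Friedrichs construction, and to deduce from the \BS-condition that $\inf\sigma_{\mathrm{ess}}(\mathcal A)>0$. Since $\ispec(y)=\dim E_-(\qAS)$ counts the eigenvalues of $\mathcal A$ lying strictly below $0$, with multiplicity, the claim will follow from the min-max principle: with a strictly positive essential-spectrum lower bound, only finitely many such eigenvalues can exist.

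Semi-boundedness is straightforward: by Lemma \ref{lem:spectra} combined with Lemma \ref{lem:conv_matrici}, the matrix $P(\tau)$ is continuous, everywhere positive definite and converges to $P_0$ at infinity, hence $P(\tau)\geq c_1 I$ uniformly, while $Q(\tau)$ and $\widetilde R(\tau)$ are uniformly bounded on $[0,+\infty)$. A Cauchy-Schwarz estimate on the mixed term $2\Mprod{Q\eta}{\eta'}$ then gives
\[
\qAS(\eta)\;\geq\;\tfrac{c_1}{2}\|\eta'\|_{L^2}^2-C_0\|\eta\|_{L^2}^2,\qquad\forall\,\eta\in W^{1,2}_0([0,+\infty),\X),
\]
so $\qAS$ is closed and semi-bounded on $W^{1,2}_0$, and its Friedrichs realisation provides the operator $\mathcal A$ with $\dim E_-(\qAS)$ equal to the total multiplicity of its eigenvalues below $0$.

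\emph{Control of $\sigma_{\mathrm{ess}}(\mathcal A)$.} For $\eta$ supported in $[T,+\infty)$ one has
\[
\qAS(\eta)-\qlim(\eta)\;=\;\int_T^{+\infty}\bigl\langle B(\tau)(\eta,\eta'),(\eta,\eta')\bigr\rangle_{\!M}\,d\tau,
\]
where $B(\tau)$ is the block matrix encoding the differences $P(\tau)-P_0$, $Q(\tau)-Q_0$ and $\widetilde R(\tau)-\widetilde R_0$. By Lemma \ref{lem:conv_matrici}, together with Lemma \ref{lem:secondW} and continuity of $D^2\pot$ (which yield $D^2W(y(\tau))\to D^2\widetilde U(s_0)$ as $\tau\to+\infty$, since $y(\tau)/\Mnorm{y(\tau)}\to s_0$), the quantity $\varepsilon_T\=\sup_{\tau\geq T}|B(\tau)|_M$ satisfies $\varepsilon_T\to 0$. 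Under \BS, Lemma \ref{lem:ess-pos-limit}(1) provides a constant $\delta_0>0$ with $\qlim(\eta)\geq\delta_0\|\eta\|_{W^{1,2}}^2$, so that, for any $\eta$ supported in $[T,+\infty)$,
\[
\qAS(\eta)\;\geq\;(\delta_0-\varepsilon_T)\|\eta\|_{W^{1,2}}^2\;\geq\;(\delta_0-\varepsilon_T)\|\eta\|_{L^2}^2.
\]
A Persson-type reduction then gives $\inf\sigma_{\mathrm{ess}}(\mathcal A)\geq\delta_0$: any putative Weyl sequence for a point $\lambda<\delta_0$ of the essential spectrum can, by an IMS localisation (valid here because $P(\tau)\geq c_1 I$ absorbs the $|\chi'|^2$ errors introduced by the cutoff and by its commutator with $Q(\tau)$), be transformed into a sequence supported arbitrarily far out on $[T,+\infty)$, to which the inequality above applies and produces a contradiction once $T$ is so large that $\varepsilon_T<\delta_0-\lambda$.

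\emph{Conclusion and main obstacle.} Since $\mathcal A$ is bounded below and $\inf\sigma_{\mathrm{ess}}(\mathcal A)\geq\delta_0>0$, its spectrum in $(-\infty,0]$ consists of finitely many isolated eigenvalues of finite multiplicity; thus $\dim E_-(\qAS)<+\infty$, proving the theorem. The genuinely delicate point is the Persson-type localisation: unlike the scalar Schrödinger case, our form has matrix-valued principal symbol $P(\tau)$ and a non-trivial first-order coupling via $Q(\tau)$, so the IMS computation and the commutator estimates with the cutoff must be carried out carefully; however, the uniform ellipticity $P(\tau)\geq c_1 I$ and the decay $\varepsilon_T\to 0$ furnished by Lemma \ref{lem:conv_matrici} are precisely what is needed to close this step and obtain the essential-spectrum gap.
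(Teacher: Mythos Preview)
Your argument is correct, and rests on the same two ingredients as the paper's: the asymptotic convergence $D(\tau)\to D_0$ (Lemma~\ref{lem:conv_matrici}) and the positive-definiteness of $\qlim$ under \BS\ (Lemma~\ref{lem:ess-pos-limit}(1)). The route, however, is genuinely different. The paper avoids the Friedrichs realisation and the Persson/IMS machinery altogether: it works purely at the level of the quadratic form and shows directly that
\[
\qAS(\eta)\;\geq\;\tfrac{\epsilon}{2}\|\eta\|^2_{W^{1,2}_0([0,+\infty))}-k\,\|\eta\|^2_{L^2(0,T)}
\]
for some finite $T$ and constants $\epsilon,k>0$, by splitting the integral at a $T$ beyond which the coefficients are $\epsilon/2$-close to their limits, and absorbing the first-order $Q$-terms on the compact piece via the elementary inequality $\langle a,b\rangle\leq\|a\|^2/(4s)+s\|b\|^2$. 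Since the restriction $W^{1,2}_0([0,+\infty))\hookrightarrow L^2(0,T)$ is compact, the right-hand side is a weakly continuous perturbation of a positive-definite form, and finiteness of $\dim E_-(\qAS)$ follows by monotonicity. This is more elementary---no self-adjoint realisation, no cutoff commutator estimates---and sidesteps precisely the IMS step you flag as delicate. Your approach, by contrast, makes the spectral mechanism (a strictly positive essential-spectrum gap) explicit, which is conceptually sharper and anticipates the Fredholm characterisation proved later in Theorem~\ref{thm:key1fredholm}, but at the cost of importing more operator-theoretic infrastructure than the statement strictly requires.
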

\begin{proof}
Let $\qAS$ be the quadratic form associated to the asymptotic motion $y$ and let us assume that \BS-condition holds. By virtue of the spectral property of $P(\tau)$ (cf. Lemma \ref{lem:spectra}) we have, for any $\eta \in W^{1,2}_0([0,+\infty);\X)$,
\begin{multline}\label{eq:decompQ}
\qAS(\eta)  \geq 
\norm{\eta'}^2_{L^2(0,+\infty)} 
+ \int_0^{+\infty} 
\Big[ \Mprod{Q(\tau)\eta}{\eta'}
+\Mprod{Q^T(\tau)\eta'}{\eta} +
\Mprod{\widetilde R(\tau)\eta}{\eta} 
\Big]\,d\tau \\
 =
\norm{\eta'}^2_{L^2([0,+\infty))} 
+ \int_0^{+\infty} 
\Mprod{\widetilde R_0\eta}{\eta} \,d\tau\\
+ \int_0^{+\infty} 
\Big[ \Mprod{(Q(\tau)-Q_0)\eta}{\eta'}
+\Mprod{(Q^T(\tau)-Q^T_0)\eta'}{\eta} +
\Mprod{(\widetilde R(\tau)-\widetilde R_0)\eta}{\eta} 
\Big]\,d\tau.
\end{multline}
On the one hand, by virtue of Lemma \ref{lem:ess-pos-limit} \emph{(1)} there exists $\epsilon \in (0,1)$ such that
\begin{equation}\label{eq:stima1}
\int_0^{+\infty} 
\Mprod{\widetilde R_0\eta}{\eta} \,d\tau \geq
\epsilon \norm{\eta}^2_{L^{2}(0,+\infty)};
\end{equation}
on the other hand Lemma \ref{lem:conv_matrici} ensures that for any $\delta >0$ there exists  $T_\delta > 0$ such that, for any $\eta \in W^{1,2}_0([0,+\infty);\X)$,
\begin{multline}\label{eq:stima2}
\int_{T_\delta}^{+\infty}
\Big[
\Mprod{(Q(\tau)-Q_0)\eta}{\eta'}
+\Mprod{(Q^T(\tau)-Q^T_0)\eta'}{\eta} +
\Mprod{(\widetilde R(\tau)-\widetilde R_0)\eta}{\eta} 
\Big]\,d\tau \\
\geq -\delta \norm{\eta}^2_{L^2(T_{\delta},+\infty)} 
\geq -\delta \norm{\eta}^2_{L^2(0,+\infty)}.
\end{multline}
Let us choose $\delta = \frac{\epsilon}{2}$ and term $T:=T_{\frac{\epsilon}{2}}$. Replacing Equations \eqref{eq:stima1} and \eqref{eq:stima2} in \eqref{eq:decompQ} we obtain
\begin{multline}\label{eq:decompQ2}
\qAS(\eta) \geq \norm{\eta'}^2_{L^2(0,+\infty)} 
+ \frac{\epsilon}{2} \norm{\eta}^2_{L^{2}(0,+\infty)} 
\\
+ \int_0^{T}
\Big[
\Mprod{(Q(\tau)-Q_0)\eta}{\eta'}
+\Mprod{(Q^T(\tau)-Q^T_0)\eta'}{\eta} +
\Mprod{(\widetilde R(\tau)-\widetilde R_0)\eta}{\eta} 
\Big]\,d\tau.
\end{multline}
Let us now estimate separately the terms in the last integral. Since we integrate on a compact interval, for some positive constants $c_1$ and $c_2$, and for any $s>0$, there holds 
\[
\int_0^{T}
\Mprod{(\widetilde R(\tau)-\widetilde R_0)\eta}{\eta} \,d\tau \geq -c_1\norm{\eta}^2_{L^2(0,T)}
\]
and 
\[
\int_0^{T}
\Mprod{(Q(\tau) - Q_0)\eta}{\eta'} \,d\tau \geq -c_2\int_0^{T}
\Mprod{\eta}{\eta'} \,d\tau
\geq -\frac{c_2}{4s}\norm{\eta}^2_{L^2(0,T)}
-{c_2}{s}\norm{\eta'}^2_{L^2(0,T)},
\]
where in the last step we used the inequality
$\langle a,b \rangle \leq \frac{\norm{a}^2}{4s} +s\norm{b}^2$, which holds in an Hilbert space for any $s>0$.\\
The integral term in Equation \eqref{eq:decompQ2} can be estimated from below by the sum
\[
-\frac{c_2}{2s}\norm{\eta}^2_{L^2(0,T)}
-2sc_2 \norm{\eta'}^2_{L^2(0,T)}
-c_1 \norm{\eta}^2_{L^2(0,T)}
\]
so that 
\begin{multline*}
\qAS(\eta) \geq \norm{\eta'}^2_{L^2(0,+\infty)} 
+ \frac{\epsilon}{2} \norm{\eta}^2_{L^{2}(0,+\infty)} 
-\left(c_1+\frac{c_2}{2s}\right)\norm{\eta}^2_{L^2(0,T)} - 2sc_2 \norm{\eta'}^2_{L^2(0,T)} \\
\geq 
(1-2sc_2)\norm{\eta'}^2_{L^2(0,+\infty)} 
+ \frac{\epsilon}{2} \norm{\eta}^2_{L^{2}(0,+\infty)} 
-\left(c_1+\frac{c_2}{2s}\right)\norm{\eta}^2_{L^2(0,T)}.
\end{multline*}
Choosing $s=\frac{1}{2c_2}\left(1-\frac{\epsilon}{2}\right)$ we claim the existence of a positive constant $k$ such that
\[
\qAS(\eta) \geq \frac{\epsilon}{2}\norm{\eta}^2_{W^{1,2}_0([0,+\infty))} 
-k\norm{\eta}^2_{L^2(0,T)}.
\]
Let us now recall that $W^{1,2}_0([0,+\infty);\X)$ is continuously embedded in $W^{1,2}([0,T];\X)$ which is compactly embedded in $L^2(0,T;\X)$.
Hence $W^{1,2}_0([0,+\infty);\X)$ is compactly embedded in $L^2(0,T;\X)$. The (RHS) is then a compact perturbation of a positive quadratic form (the norm), hence its spectral index is finite. By monotonicity of the spectral index, we conclude that also the spectral index of the form $\qlim$ is finite. 
\end{proof}
\begin{rem}\label{rem:vi}
In \cite[Definition 4.2]{BS08} the authors introduced, dealing with total collision trajectories admitting an asymptotic direction, the {\em collision Morse index\/}: this index is nothing but the spectral index computed with respect to angular variations, namely  with respect to the variations pointwise $M$-orthogonal to $s_0$. The main result of the aforementioned paper states that, given an $s_0$-a.s., then
\[
	\mu_1<  -\dfrac{(2-\alpha)^2}8{\pot(s_0)}  
	\qquad \implies \qquad
	\ispec(y) = +\infty.
\]
Combining this result with the finiteness of the spectral index under [BS]-condition we claim that the  main theorem in \cite{BS08} is sharp, understanding that in the limiting case
$
\mu_1=  -\dfrac{(2-\alpha)^2}8{\pot(s_0)}
$
the spectral index could be finite or not.

Furthermore we infer that not only $s_0$-a.s. to a minimal central configuration but also $s_0$-a.s. to a saddle one with a ``not so negative minimal eigenvalue'' has  finite Morse index. Hence these asymptotic motions may be responsible of a change in the topology of the sub-levels of the Lagrangian action and actually plays a role in the topological balance. All others $s_0$-a.s. are  somehow ``{\em invisible\/}''.
\end{rem}
}
\begin{ex}[{\em Collinear central configuration, $N=3$\/}]
Let $N=3$ and $s_0$ be the collinear central configuration. Assume that 
$m_1=m_3=1$, $m_2=m\geq 1$ and that the mass $m_2$ stays in between the other 
two.   
Following the computations in \cite[Section 5]{BS08} we can assert that if $m 
\leq M 
\approx 3.51$ then 
there exists $\bar \alpha \in [0,1)$ such that for any $\alpha \in [\bar 
\alpha, 
2)$ the \BS-condition is not satisfied. 

When we focus on the Keplerian case, fixing $\alpha = 1$, in \cite{Hua11} we 
find the optimal condition on the mass $m$ in order to obtain a finite spectral 
index for any $s_0$-a.s., that is
\[
s_0 \text{ satisfies  \BS } 
\quad \Leftrightarrow \quad
m > \dfrac{27}{4}.
\]
\end{ex}

\subsection{The geometrical index}\label{subsec:geometrical-index}
%
This Section is devoted to set the geometrical framework behind the construction 
of a {\em geometrical index\/} associated to a $s_0$-a.s., through a Maslov-type 
index defined as intersection index in the Lagrangian Grassmannian manifold  
(for further details we refer the reader to Appendix 
\ref{sec:spectral-flow-and-Maslov} and to 
\cite{HP17} and references therein). 

Before introducing the symplectic setting, we recall that the configuration space  $\X$ is a $N=d(n-1)$-dimensional linear subspace of $\R^{nd}$; let us denote with $\phi: \R^N \to \X$ a linear isomorphism. Now, any $N\times N$ endomorphism, $A:\X \to \X$, defined in the previous subsection (we refer to $P(\tau)$, $Q(\tau)$, $R(\tau)$, $P_0$, $Q_0$, $R_0$, \ldots), can be identified with 
\[
\trasp{\phi}\,M\, A\, \phi : \R^N \to \R^N.
\]
We keep the same names to identify such endomorphisms.


In order to introduce the geometrical index in terms of the
Maslov index, we start by considering the Hamiltonian systems
arising from the index forms introduced in Section \ref{sec:fredholmness}
(cf. Definitions \ref{def:index_quadr_AS} and \ref{def:index_quadr_lim}). 

With this aim given an $s_0$-a.s. $y$, we define the {\em $s_0$-asymptotic Sturm-Liouville operator\/} as the closed selfadjoint linear operator $
\mathcal T:\mathscr D(\mathcal T)\subset L^2([0,+\infty), \R^N) \to  L^2([0,+\infty), \R^N) $
defined by 
\begin{equation}\label{eq:sturm-liouville}
\mathcal T u\=-\dfrac{d}{d\tau}\Big(P(\tau) u' + Q(\tau) u \Big) + 
\trasp{Q}(\tau)u' +   \widetilde R(\tau) u, 
\qquad \tau \in (0,+\infty)
\end{equation}
having dense domain $\mathscr D(\mathcal T)\= W^{2,2}([0,+\infty), \R^N)\cap W^{1,2}_0([0,+\infty), \R^N)$. \\
Finally, we term {\em $s_0$-limit Sturm-Liouville operator\/}, the  
operator  defined by 
\begin{equation}\label{eq:sturm-liouville-limit}
\mathcal T^* u\=-\dfrac{d}{d\tau}\Big(P_0 u' + Q_0 u \Big) + 
\trasp{Q}_0u' +   \widetilde R^*_0 u, 
\qquad \tau \in (0,+\infty).
\end{equation}

We now associate to each Sturm-Liouville operator the corresponding 
Hamiltonian system. More precisely, $u \in \ker({\mathcal T})$ iff $z$, pointwise defined by $z(\tau)=\big(P(\tau) u'(\tau) + Q(\tau)u(\tau), u(\tau)\big)$, is a solution of the 
Hamiltonian system 
\begin{equation}\label{eq:Ham-L2-coll}
z'(\tau)= H(\tau)\, z(\tau),
\qquad \tau \in [0,+\infty)
\end{equation}
where $H(\tau)\=J B(\tau)$ with
\begin{equation}\label{eq:H-coll-1}
B(\tau)\= \begin{bmatrix}
  P^{-1}(\tau) & -P^{-1}(\tau)Q(\tau)\\
  -\trasp{Q}(\tau)P^{-1}(\tau) & \trasp{Q}(\tau)P^{-1}(\tau)Q(\tau)-\widetilde R(\tau) 
\end{bmatrix}.            
\end{equation}
The  system defined in Equation \eqref{eq:Ham-L2-coll} will be termed 
{\em  $s_0$-aymptotic Hamiltonian system\/}. Analogously, to the Sturm-Liouville 
operator $\mathcal T^*$ we associate the first order Hamiltonian System 
termed {\em 
$s_0$-limit Hamiltonian system\/} and defined as follows 
\begin{equation}\label{eq:Ham-L2-coll-limit-nuovo}
z'(\tau)= H^*\, z(\tau)
\qquad \textrm{ for }\tau \in [0,+\infty)
\end{equation}
where $H^* \=J  B^*$ and  
\begin{equation}
\label{eq:H-coll-1-limit-nuovo} 
B^* \= \begin{bmatrix}
  P^{-1}_0 & -P_0^{-1}Q_0\\
         -\trasp{Q}_0P_0^{-1}&  \trasp{Q}_0P_0^{-1} Q_0-\widetilde{R_0}
         \end{bmatrix}
\end{equation}
Next result is a technical lemma that aims to simplify the expression of the matrix $H^*$ in order to prove its hyperbolicity, in Proposition \ref{prop:bs-iperb}.
\begin{lem}\label{thm:tecnico}
We let 
\[
Z:=
\begin{bmatrix}
\Id & A\\
\trasp{A} & \trasp{A}A-C
\end{bmatrix}
\]
where $A \in \Sym{N}$ and $C$ is a $N$-dimensional matrix. 
Then $JZ$ is symplectic similar to $J\widehat{Z}$ where 
\[
\widehat{Z}:= 
\begin{bmatrix}
\Id & 0\\
0 & -C
\end{bmatrix}.
 \]
\end{lem}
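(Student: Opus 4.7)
The plan is to exhibit an explicit symplectic conjugation between $JZ$ and $J\widehat Z$ of the form
\[
S=\begin{bmatrix} \Id & -A\\ 0 & \Id\end{bmatrix}.
\]
First I would check that $S$ is symplectic. Since $A$ is symmetric, $-A$ is symmetric too, and a direct block computation of $\trasp{S}\,J\,S$ (using the standard $J=\bigl[\begin{smallmatrix} 0 & -\Id\\ \Id & 0\end{smallmatrix}\bigr]$) yields $J$; equivalently the well-known criterion ``block-triangular with identity diagonal and symmetric off-diagonal $\Rightarrow$ symplectic'' applies. In particular $S\,J\,\trasp{S}=J$ as well.

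Next I would recall the key algebraic identity for a symplectic $S$: for any matrix $B$,
\[
S^{-1}(JB)\,S \;=\; J\,\trasp{S}\,B\,S.
\]
Indeed, multiplying $S\,J\,\trasp{S}=J$ on the left by $S^{-1}$ gives $J\,\trasp{S} = S^{-1}J$, hence $S^{-1}JB\,S = J\,\trasp{S}B\,S$. Applying this to $B=Z$ reduces the problem to verifying the single matrix identity
\[
\trasp{S}\,Z\,S \;=\; \widehat Z.
\]

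The remaining step is a short block calculation. Using $A=\trasp{A}$,
\[
\trasp{S}\,Z \;=\;
\begin{bmatrix} \Id & 0\\ -A & \Id\end{bmatrix}
\begin{bmatrix} \Id & A\\ \trasp{A} & \trasp{A}A-C\end{bmatrix}
\;=\;
\begin{bmatrix} \Id & A\\ 0 & -C\end{bmatrix},
\]
and then
\[
\bigl(\trasp{S}\,Z\bigr)\,S \;=\;
\begin{bmatrix} \Id & A\\ 0 & -C\end{bmatrix}
\begin{bmatrix} \Id & -A\\ 0 & \Id\end{bmatrix}
\;=\;
\begin{bmatrix} \Id & 0\\ 0 & -C\end{bmatrix}
\;=\; \widehat Z,
\]
which is exactly what we need.

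There is no real obstacle here: once the correct shear $S$ is guessed, everything reduces to two $2\times 2$-block matrix products. The only mild subtlety is keeping track of the symmetry of $A$, which is used both to ensure $S\in\Sp(2N)$ and to make the cross terms $AA-A^2$ cancel in the lower-left block.
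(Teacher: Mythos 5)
Your proposal is correct and uses exactly the same shear matrix $S=\begin{bsmallmatrix}\Id & -A\\ 0 & \Id\end{bsmallmatrix}$ and the same conjugation identity $\trasp{S}ZS=\widehat Z \Rightarrow S^{-1}(JZ)S=J\widehat Z$ as the paper's own proof. You merely spell out the block computations and the identity $S^{-1}JB\,S = J\trasp{S}BS$ in more detail than the paper does.
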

\begin{proof}
In order to prove the result, we define the symplectic matrix 
$\displaystyle 
P\= 
\begin{bmatrix}
 \Id & -A\\ 
 0 & \Id
\end{bmatrix}.$  By a direct calculation, the following equality holds 
$\displaystyle 
\trasp{P} Z P = \widehat{Z}$ 
hence $\displaystyle 
J \widehat{Z} = J \trasp{P} Z P= {P}^{-1}(J Z)P, $ 
and the thesis readily follows.
\end{proof}
\begin{lem}\label{thm:tecnico2}
The Hamiltonian matrices $JT^*$ and $J \widehat{T^*}$ where
\[
T^*  \= \begin{bmatrix}
\Id & -c_\alpha \overline \delta_\alpha \Id\\
-c_\alpha \overline \delta_\alpha \Id & (c_\alpha^2\overline \delta_\alpha)^2 \Id -
\dfrac{(2-\alpha)^2}{8} \pot(s_0)\Id -{D^2\widetilde U(s_0)}
\end{bmatrix},
\]
and 
\[
\widehat{T^*}  \= \begin{bmatrix}
\Id & 0\\
0   & -\dfrac{(2-\alpha)^2}{8} \pot(s_0)\Id - {D^2\widetilde U(s_0)}
\end{bmatrix}
\]
are symplectically similar.
\end{lem}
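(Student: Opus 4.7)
The plan is to reduce the statement to a direct application of Lemma \ref{thm:tecnico}. To this end, one first identifies the block structure of $T^*$ with the template matrix $Z$ in that lemma.

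Concretely, I would set
\[
A \= -c_\alpha\,\overline\delta_\alpha\,\Id,
\qquad
C \= \dfrac{(2-\alpha)^2}{8}\pot(s_0)\,\Id + D^2\widetilde U(s_0).
\]
Since $A$ is a scalar multiple of the identity, $A$ is symmetric and $\trasp{A}A = c_\alpha^2\,\overline\delta_\alpha^{\,2}\,\Id$. A direct substitution then yields
\[
\trasp{A}A - C = c_\alpha^2\,\overline\delta_\alpha^{\,2}\,\Id - \dfrac{(2-\alpha)^2}{8}\pot(s_0)\,\Id - D^2\widetilde U(s_0),
\]
which coincides with the lower-right block of $T^*$ (interpreting the typographical $(c_\alpha^2\overline\delta_\alpha)^2$ in the statement as $c_\alpha^2\overline\delta_\alpha^{\,2}$). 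The off-diagonal blocks are $A$ and $\trasp{A}$, matching those of $Z$ by construction.

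Hence $T^*$ has exactly the form of the matrix $Z$ of Lemma \ref{thm:tecnico}, with $A$ and $C$ as above. Applying the lemma, $JT^*$ is symplectically similar (via the symplectic conjugation by $P = \begin{bmatrix}\Id & -A\\ 0 & \Id\end{bmatrix}$ exhibited in the proof of Lemma \ref{thm:tecnico}) to $J\widehat{Z}$, where
\[
\widehat{Z} = \begin{bmatrix} \Id & 0 \\ 0 & -C \end{bmatrix}
= \begin{bmatrix} \Id & 0 \\ 0 & -\dfrac{(2-\alpha)^2}{8}\pot(s_0)\,\Id - D^2\widetilde U(s_0) \end{bmatrix}
= \widehat{T^*}.
\]
This gives the claim. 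There is no real obstacle here: the only thing to check carefully is the bookkeeping that the lower-right block of $T^*$ splits as $\trasp{A}A - C$ with the chosen $A$ and $C$, which is immediate since $A$ is scalar.
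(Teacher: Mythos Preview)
Your proof is correct and follows exactly the same route as the paper: set $A=-c_\alpha\overline\delta_\alpha\,\Id$ and $C=\dfrac{(2-\alpha)^2}{8}\pot(s_0)\,\Id+D^2\widetilde U(s_0)$ and invoke Lemma~\ref{thm:tecnico}. Your observation that the lower-right entry should read $(c_\alpha\overline\delta_\alpha)^2=c_\alpha^2\overline\delta_\alpha^{\,2}$ rather than $(c_\alpha^2\overline\delta_\alpha)^2$ is also correct.
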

\begin{proof}
The proof of this result readily follows by Lemma \ref{thm:tecnico} 
once setting 
\[
A:= -c_\alpha \overline \delta_\alpha \Id
\qquad \text{and} \qquad 
C:= \dfrac{(2-\alpha)^2}{8} \pot(s_0) \Id +{D^2\widetilde U(s_0)}
\]
\end{proof}
Let $E,F$ be two Euclidean vector spaces, $A \in \Sp(E \times E)$ and $B \in \Sp(F \times F)$; the diamond product $A \diamond B$ is a symplectic endomorphism of $(E \oplus F) \times (E \oplus F)$ whose corresponding matrix (once a symplectic basis is chosen) is the diamond product between the matrices corresponding to $A$ and $B$ respectively (cf. \cite{Lon02}, for further details). 
\begin{prop}\label{prop:bs-iperb}
The \BS-condition is equivalent to the  hyperbolicity of the  Hamiltonian endomorphism $H^* = JB^*$.
\end{prop}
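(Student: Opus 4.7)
The plan is to decompose the ambient space along the $M$-orthogonal splitting $\X = \R s_0 \oplus s_0^{\perp_M}$ and observe that every building block of $B^*$ preserves this splitting. Indeed, from the explicit formulas \eqref{eq:matrici_0} the endomorphisms $P_0$, $Q_0$, $R_0$ all act as scalars on each summand; and since $\widetilde U(x) = \Mnorm{x}^\alpha\pot(x)$ is $0$-homogeneous, $s_0 \in \ker D^2\widetilde U(s_0)$, so $D^2\widetilde U(s_0)$ also preserves the splitting. Consequently, after a symplectic reordering the ambient symplectic space $(\R^{2N},J)$ splits into the diamond product of two symplectic subspaces, one of (real) dimension $2$ carried by the $s_0$-direction and one of dimension $2(N-1)$ carried by $s_0^{\perp_M}$. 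Hyperbolicity of $H^*$ on the whole space is then equivalent to hyperbolicity of each summand separately.

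On the direction $\R s_0$ one has $P_0 = (1+c_\alpha)\Id$, $Q_0 = 0$, and by Lemma \ref{lem:spectra} together with $D^2\widetilde U(s_0)s_0 = 0$ one gets $\widetilde{R_0}\big|_{\R s_0} = 2\pot(s_0) > 0$. The restriction of $B^*$ is thus diagonal with entries $(1+c_\alpha)^{-1}$ and $-2\pot(s_0)$, and a direct computation shows that the two eigenvalues of $H^*\big|_{\R s_0}$ are real and nonzero, equal to $\pm\sqrt{2\pot(s_0)/(1+c_\alpha)}$. This part is therefore always hyperbolic, and the corresponding block of the [BS]-condition is automatically satisfied.

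On the orthogonal complement $s_0^{\perp_M}$, Lemma \ref{lem:spectra} gives $P_0 = \Id$, $Q_0 = c_\alpha\overline\delta_\alpha\Id$, $R_0 = \tfrac{(2-\alpha)^2}{8}\pot(s_0)\Id$; hence $B^*\big|_{s_0^{\perp_M}}$ is precisely the matrix $T^*$ of Lemma \ref{thm:tecnico2} (once the typo $(c_\alpha^2\overline\delta_\alpha)^2$ is read as $c_\alpha^2\overline\delta_\alpha^2$), with the corresponding $D^2\widetilde U(s_0)$ restricted to $s_0^{\perp_M}$. Applying Lemma \ref{thm:tecnico2}, $H^*\big|_{s_0^{\perp_M}}$ is symplectically similar to $J\widehat{T^*}\big|_{s_0^{\perp_M}}$, where the lower-right block of $\widehat{T^*}$ coincides exactly with $-\widetilde{R_0}\big|_{s_0^{\perp_M}}$. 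A one-line computation then yields
\[
\bigl(J\widehat{T^*}\bigr)^2\Big|_{s_0^{\perp_M}} = \diag\bigl(\widetilde{R_0}\big|_{s_0^{\perp_M}},\, \widetilde{R_0}\big|_{s_0^{\perp_M}}\bigr),
\]
so that the spectrum of $H^*\big|_{s_0^{\perp_M}}$ consists of the numbers $\pm\sqrt{\mu}$ with $\mu$ running over the eigenvalues of $\widetilde{R_0}\big|_{s_0^{\perp_M}}$.

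Combining the two parts concludes the proof: $H^*\big|_{s_0^{\perp_M}}$ has purely real nonzero spectrum iff $\widetilde{R_0}\big|_{s_0^{\perp_M}}$ is positive definite (a non-positive eigenvalue of $\widetilde{R_0}\big|_{s_0^{\perp_M}}$ produces either a zero or a pure imaginary pair in $H^*$), while the $\R s_0$-direction is always hyperbolic and contributes a positive value $2\pot(s_0)$ to $\widetilde{R_0}$. Hence $H^*$ is hyperbolic if and only if $\widetilde{R_0}$ is positive definite, which is the [BS]-condition. The only mild technical point is to verify that the symplectic splitting of $(\R^{2N},J)$ induced by $\X = \R s_0\oplus s_0^{\perp_M}$ is compatible with the symplectic conjugation of Lemma \ref{thm:tecnico2}, but this is immediate because the conjugating matrix there is block-diagonal in the same splitting.
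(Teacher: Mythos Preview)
Your proof is correct and follows essentially the same route as the paper's: you split $\X=\R s_0\oplus s_0^{\perp_M}$, observe that all blocks of $B^*$ respect this splitting (the paper phrases this via the $\diamond$-product), check directly that the radial $2\times 2$ block is always hyperbolic, and on the tangential part apply Lemma~\ref{thm:tecnico2} and the computation of $(J\widehat{T^*})^2$ to identify the spectrum of $H^*\big|_{s_0^{\perp_M}}$ with $\pm\sqrt{\mu}$ for $\mu\in\mathfrak{sp}(\widetilde{R_0}\big|_{s_0^{\perp_M}})$. Your observation that $(c_\alpha^2\overline\delta_\alpha)^2$ in the statement of $T^*$ should read $c_\alpha^2\overline\delta_\alpha^2$ is also correct.
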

\begin{proof}
We start to observe that the symmetric endomorphism $B_0^*$ can be written as follows 
\begin{equation}
 B^*\= 
 \begin{bmatrix}
  \dfrac{1}{1+ c_\alpha} & 0 \\
  0 & -\dfrac{(2-\alpha)^2}{8}\pot(s_0)- c_\alpha \overline \delta_\alpha^2  \end{bmatrix}\diamond T^*
\end{equation}
where the first endomorphism is defined on $\R s_0$, while the second on $s_0^\perp$.
With  respect to the $\diamond$-product, the matrix $H^*$ can be written as 
\begin{equation}
 H^*= 
 \begin{bmatrix}
 0 & \dfrac{(2-\alpha)^2}{8}\pot(s_0) + c_\alpha \overline \delta_\alpha^2 \\
  \dfrac{1}{1+ c_\alpha} & 0
 \end{bmatrix}\diamond JT^*
\end{equation}
We obtain that $H^*$ is symplectically similar to 
\begin{equation}
\widehat{H^*} = \begin{bmatrix}
 0 &\dfrac{(2-\alpha)^2}{8}\pot(s_0)+ c_\alpha \overline \delta_\alpha^2 \\
  \dfrac{1}{1+ c_\alpha} & 0
 \end{bmatrix}\diamond  J\widehat{T^*},
\end{equation}
indeed, by taking into account Lemma \ref{thm:tecnico2}, 
$J\widehat{T^*} = P^{-1}(J{T^*})P$ for some symplectic matrix $P$, then $\widehat{H^*} = Q^{-1}{H^*}Q$, with $Q := I \diamond P$.

The hyperbolicity of $H^*$ actually depends only on the hyperbolicity of 
\begin{equation}
 J\widehat{T^*} = \begin{bmatrix}
  0 &   \dfrac{(2-\alpha)^2}{8} \pot(s_0)\Id + D^2\widetilde U(s_0)\\
  \Id & 0 
 \end{bmatrix},
\end{equation}
in fact, we recall that both constants $c_\alpha$ and $\dfrac{(2-\alpha)^2}{8}\pot(s_0)+ c_\alpha \overline \delta_\alpha^2 = 2U(s_0)$ are positive.
We now  observe that 
\[
\big(J\widehat{ T^{*}})^2\= 
\begin{bmatrix}
 \dfrac{(2-\alpha)^2}{8} \pot(s_0) \Id+D^2 \widetilde U(s_0)&0\\
0 &  \dfrac{(2-\alpha)^2}{8} \pot(s_0) \Id+{D^2\widetilde U(s_0)}
\end{bmatrix}.
\] 
Then the  eigenvalues of $J\widehat{T^*}$ are plus/minus the square root of the eigenvalues of 
\[
\]
Since \BS-condition holds if and only if the spectrum of the previous matrix in entirely contained in
 $(0,+\infty)$, we claim that $J\widehat{T^*}$, hence $\widehat{H^*}$, is hyperbolic if and only if \BS-condition holds.
\end{proof}
By taking into account  of Proposition \ref{prop:bs-iperb}, we are in position to prove the main result of this section.
\begin{thm}\label{thm:key1fredholm}
	Let $y$ be an $s_0$-a.s. and let $\qAS$ be the $s_0$-asymptotic quadratic form of $y$ introduced in Definition \ref{def:index_quadr_AS}. Then $\qAS$ is Fredholm on $W^{1,2}_0\big([0,+\infty);\X\big)$ if and only if the \BS-condition holds. 
\end{thm}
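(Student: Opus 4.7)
The plan is to reduce the Fredholm question for $\qAS$ on $W^{1,2}_0([0,+\infty);\X)$ to a spectral question at infinity for the associated Sturm--Liouville operator $\mathcal T$ of \eqref{eq:sturm-liouville}, and then exploit Proposition \ref{prop:bs-iperb}, which identifies the \BS-condition with the hyperbolicity of the limit Hamiltonian matrix $H^{*}=JB^{*}$. By Lemma \ref{lem:conv_matrici} the coefficients of $\mathcal T$ converge uniformly to those of the constant-coefficient limit $\mathcal T^{*}$ in \eqref{eq:sturm-liouville-limit}, so the essential spectra of $\mathcal T$ and $\mathcal T^{*}$ coincide and Fredholmness of $\qAS$ becomes equivalent to $0\notin\sigma_{\mathrm{ess}}(\mathcal T^{*})$, i.e.\ to the absence of purely imaginary eigenvalues of $H^{*}$.

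\textbf{Sufficiency.} Assume the \BS-condition holds. The proof of Theorem \ref{prop:key1-una freccia} actually produced the coercivity-modulo-compact estimate
\[
\qAS(\eta)\ \geq\ \tfrac{\varepsilon}{2}\,\norm{\eta}^{2}_{W^{1,2}_0([0,+\infty);\X)}\ -\ k\,\norm{\eta}^{2}_{L^{2}((0,T);\X)}
\]
for suitable positive constants $\varepsilon, k, T$. Since the embedding $W^{1,2}_0([0,+\infty);\X)\hookrightarrow L^{2}((0,T);\X)$ is compact, the subtracted term represents a compact self-adjoint operator on $W^{1,2}_0$. Hence the bounded self-adjoint operator representing $\qAS$ is a compact perturbation of a coercive (and thus invertible) one, and is therefore Fredholm by stability of the Fredholm class under compact perturbations.

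\textbf{Necessity.} I argue contrapositively: assume the \BS-condition fails. By Proposition \ref{prop:bs-iperb} there exist $\omega\in\R$ and a nonzero $v=(p_0,u_0)\in\C^{2N}$ with $H^{*}v=i\omega v$; the Hamiltonian block structure of $B^{*}$ forces $u_0\neq 0$, so by taking real/imaginary parts of $e^{i\omega\tau}v$ one obtains a bounded real-valued solution $u^{*}\in C_b([0,+\infty);\R^{N})$ of $\mathcal T^{*}u^{*}=0$ which does not decay at infinity. Fix $\chi\in C^{\infty}_c(\R)$ with $\mathrm{supp}\,\chi\subset[1,2]$, $\chi\not\equiv 0$, and pick $t_n\to+\infty$ rapidly enough that $\varepsilon_n:=\sup_{\tau\geq t_n}|D(\tau)-D_0|\to 0$ and the intervals $[t_n,t_n+2n]$ are pairwise disjoint. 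The functions
\[
\phi_n(\tau)\=\chi\!\left(\tfrac{\tau-t_n}{n}\right)u^{*}(\tau),\qquad \tilde\phi_n\=\phi_n/\norm{\phi_n}_{L^{2}},
\]
lie in $\mathscr D(\mathcal T)$ (they have compact support in $(0,+\infty)$, so the Dirichlet condition at $0$ holds automatically), satisfy $\tilde\phi_n\rightharpoonup 0$ weakly in $L^{2}$ by disjointness of the supports, and a direct estimate yields $\norm{\mathcal T\tilde\phi_n}_{L^{2}}\to 0$: the commutator terms coming from $\chi'$ and $\chi''$ carry a factor $n^{-1/2}$ after renormalisation against $\norm{\phi_n}_{L^{2}}\asymp\sqrt{n}$, whereas the coefficient perturbation satisfies $\norm{(\mathcal T-\mathcal T^{*})\tilde\phi_n}_{L^{2}}\leq C\varepsilon_n$. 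Thus $0$ belongs to the essential spectrum of $\mathcal T$ and $\qAS$ is not Fredholm.

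\textbf{Expected obstacle.} The most delicate point is the calibration of the two scales in the Weyl sequence construction: the cutoff width $n$ must be large enough that the $\chi',\chi''$ contributions decay like $n^{-1/2}$ after normalising against the $\sqrt{n}$-growth of $\norm{\phi_n}_{L^{2}}$, while simultaneously $t_n$ must grow fast enough that the nonautonomous perturbation $D(\tau)-D_0$ is uniformly small on $\mathrm{supp}\,\tilde\phi_n$. Coupling the choice of $t_n$ to $n$ consistently with the rate of convergence guaranteed by Lemma \ref{lem:conv_matrici} is where careful bookkeeping is required.
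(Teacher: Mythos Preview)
Your argument is sound and proceeds along a genuinely different route from the paper's. The paper first represents $\IAS$ as $\langle\mathcal C\,\mathcal T u,v\rangle_{W^{1,2}_0}$ with $\mathcal C=(I+T^*T)^{-1}$ an isomorphism, reducing the question to the Fredholmness of the Sturm--Liouville operator $\mathcal T$ on $L^2$; it then invokes \cite[Theorem~4.1]{RS05a}, which characterises Fredholmness of asymptotically autonomous problems on the half-line by hyperbolicity of the limiting first-order system, and closes with Proposition~\ref{prop:bs-iperb}. Your approach is more self-contained: for sufficiency you recycle the coercivity-modulo-compact inequality already established in the proof of Theorem~\ref{prop:key1-una freccia}, which is an elegant economy; for necessity you construct an explicit Weyl singular sequence from the bounded solution of $\mathcal T^* u^*=0$ furnished by the imaginary eigenvalue of $H^*$. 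The paper's proof is shorter but relies on external literature; yours is longer but elementary and makes the mechanism visible.

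One technical point in the necessity half needs attention. Your bound $\norm{(\mathcal T-\mathcal T^*)\tilde\phi_n}_{L^2}\leq C\varepsilon_n$ requires not only $|D(\tau)-D_0|\to 0$ (Lemma~\ref{lem:conv_matrici}) but also $|P'(\tau)|,|Q'(\tau)|\to 0$, since expanding $(\mathcal T-\mathcal T^*)\phi_n$ in non-divergence form produces the terms $-P'(\tau)\phi_n'-Q'(\tau)\phi_n$. These derivative decays do hold---for instance $P'(\tau)=c_\alpha(s'\otimesm s+s\otimesm s')$ vanishes by~\eqref{eq:stima_s'}, and $Q'$ is controlled through the equations of motion governing $(\rho'/\rho)'$ and $s''$---but they are not the content of Lemma~\ref{lem:conv_matrici} as you invoke it and should be justified separately. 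You should also make explicit the passage from ``$0\in\sigma_{\mathrm{ess}}(\mathcal T)$'' to ``$\qAS$ is not Fredholm on $W^{1,2}_0$'': this is exactly the $\mathcal C\,\mathcal T$ representation that the paper writes down, and it is the step your strategy paragraph announces but does not actually carry out.
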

\begin{proof}
We start to consider the $s_0$-asymptotic index form of $y$, given in Definition \ref{def:index_quadr_AS}, namely
\begin{equation*}
	\IAS(u,v) =
	\int_0^{+\infty}\left[
	\Mprod{ P(\tau)u'}{v'}+ 
	\Mprod{ Q(\tau)u}{v'}+
	\Mprod{\trasp{Q}(\tau)u'}{v} + \Mprod{\widetilde { R}(\tau)u}{v}\right]\, d\tau,
\end{equation*}
hence, after an integration by part, we obtain
\begin{equation}
\label{eq:2--1}
\IAS(u,v) = \langle \mathcal T u,v \rangle_{L^2(0,+\infty)}.
\end{equation}
Let $T: W_0^{1,2}\big([0,\infty)\big)\to L^2(0,+\infty)$ be the differential operator defined by  $
Tu\=\dfrac{d\,u}{d\tau}$ 
and let us consider the (unbounded) self-adjoint operator $\mathcal C$ (in $L^2$ with dense domain $\mathcal D(\mathcal C)= W^{2,2}\big([0,\infty) \big)$ defined by $\mathcal C\= (\Id + T^*T)^{-1}$. Now,  we observe that the operator $I+ T^*T$ is clearly a selfadjoint (in $L^2$) Fredholm operator, since it is an equivalent norm in $W_0^{1,2}\big([0,+\infty)\big)$; in fact,
\begin{eqnarray}\label{eq:00}
\langle(I+T^*T)u, v\rangle_{L^2(0,+\infty)}
= \int_0^{+\infty} \left[\Mprod{u}{v}- \Mprod{\dfrac{d^2u}{d\tau^2}}{v} \right] d\tau \\
= \int_0^{+\infty} \left[\Mprod{u}{v}+ \Mprod{\dfrac{du}{d\tau}}{\dfrac{dv}{d\tau}} \right]d\tau.
\end{eqnarray}
By this fact, readily follows that $\mathcal C$ is a self-adjoint (in $L^2$) Fredholm operator and we observe that the form $\IAS$  can be written as follows
\begin{equation}
\IAS(u,v) =  \langle \mathcal C\,\mathcal T u,v \rangle_{W_0^{1,2}(0,+\infty)}, \qquad \forall \, u,v \in W^{1,2}_0([0,+\infty);\X).
\end{equation}
In order to conclude the proof, it is enough to prove that $ \mathcal C\,\mathcal T$ is a Fredholm operator if and only if \BS-condition holds. Being $\mathcal C$ a Fredholm operator we need to show that $\mathcal T$ is a Fredholm operator if and only if \BS-condition holds. 
By invoking \cite[Theorem 4.1]{RS05a}, we get that $\mathcal T$ is a Fredholm operator if and only if the first order Hamiltonian system associated to the  $s_0$-limit Sturm-Liouville operator $\mathcal T^* $ is hyperbolic. We conclude the proof using Proposition \ref{prop:bs-iperb}. 
\end{proof}
Summing up, Theorem \ref{prop:key1-una freccia} and Theorem \ref{thm:key1fredholm}, we obtain the following result that will allow the construction of the index theory of Section \ref{sec:main}.
Let us recall that the form $\qAS$ associated to an $s_0$-a.s. $y$ is termed \emph{essentially positive} if $\ispec(y) < +\infty$ (cf. Proposition \ref{thm:as69}).
\begin{cor}\label{cor:tuttoinsieme}
Let $y$ be an $s_0$-a.s. and let $\qAS$ be the $s_0$-asymptotic quadratic form of $y$ introduced in Definition \ref{def:index_quadr_AS}. Then $\qAS$ is 
(bounded) essentially positive and Fredholm on $W^{1,2}_0\big([0,+\infty)\big)$ if and 
only if the \BS-condition holds. 
\end{cor}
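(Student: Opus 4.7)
The plan is to package together the two principal results already proved in this section. Theorem \ref{thm:key1fredholm} establishes that the form $\qAS$ on $W^{1,2}_0\big([0,+\infty);\X\big)$ is Fredholm precisely when the \BS-condition holds, and Theorem \ref{prop:key1-una freccia} asserts that \BS\ forces $\ispec(y) = \dim E_-(\qAS)$ to be finite; by Proposition \ref{thm:as69} this finiteness is equivalent to $\qAS$ being essentially positive. So the corollary is essentially an accounting statement.

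For the direction \BS\ $\Rightarrow$ (bounded, essentially positive, Fredholm), I would first check boundedness of the form. Its expression in Definition \ref{def:index_quadr_AS} involves only the matrix-valued coefficients $P(\tau)$, $Q(\tau)$, $\widetilde R(\tau)$, which are continuous on $[0,+\infty)$ and, by Lemma \ref{lem:conv_matrici}, converge to the finite limits $P_0$, $Q_0$, $\widetilde R_0$ as $\tau\to+\infty$; hence they are uniformly bounded on $[0,+\infty)$. A Cauchy--Schwarz estimate then bounds $|\qAS(\eta)|$ by a constant times $\|\eta\|_{W^{1,2}_0}^{2}$. Fredholmness follows from the sufficient direction of Theorem \ref{thm:key1fredholm}, and the finiteness of $\ispec(y)$ follows from Theorem \ref{prop:key1-una freccia}, which by Proposition \ref{thm:as69} is equivalent to essential positivity.

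For the converse direction, I would simply observe that a bounded essentially positive and Fredholm form is in particular Fredholm, so the necessary direction of Theorem \ref{thm:key1fredholm} immediately yields \BS.

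No substantive obstacle is expected: the content has already been extracted by Theorems \ref{prop:key1-una freccia} and \ref{thm:key1fredholm}. The only small bookkeeping point is verifying the boundedness of $\qAS$ on $W^{1,2}_0$, and this reduces to the uniform control on the coefficient matrices provided by Lemma \ref{lem:conv_matrici} together with their continuity on compact sub-intervals.
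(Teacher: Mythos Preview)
Your proposal is correct and follows exactly the paper's approach: the corollary is stated as an immediate consequence of Theorem~\ref{prop:key1-una freccia} and Theorem~\ref{thm:key1fredholm}, with essential positivity identified with finiteness of $\ispec(y)$ via Proposition~\ref{thm:as69}. Your extra remark on boundedness of $\qAS$ (from the uniform control on $P(\tau),Q(\tau),\widetilde R(\tau)$ given by Lemma~\ref{lem:conv_matrici}) is a harmless addition that the paper leaves implicit.
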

For any  $\tau_0 \in [0,+\infty)$, let $
\psi_{\tau_0}:[0,+\infty)\to \Sp(2N)$ be the 
matrix-valued solution of the Hamiltonian system given in Equation 
\eqref{eq:Ham-L2-coll} such that $\psi_{\tau_0}(\tau_0)=\Id$. 
We then define  the {\em stable space\/} as 
\begin{equation}\label{eq:stabiledellaflia}
E^s(\tau_0)=\Set{v \in \R^{2N}| \lim_{\tau \to 
		+\infty}\psi_{\tau_0}(\tau)\, v=0}
\end{equation}
and we set 
\begin{equation}\label{eq:stabile-infinito}
E_*^s \= \Set{v \in \R^{2N}| \lim_{\tau \to +\infty}e^{\tau H^*}\, v=0},
\end{equation}
the stable space of an autonomous system  $z'(\tau)= H^*\, z(\tau)$, which it doesn't depend on 
the initial condition $\tau_0$. 
In order to define the geometrical index as an intersection index of a smooth path of Lagrangian subspace the next result will be crucial. At first, let us recall that the Grassmannian $G_N(\R^{2N})$ is a smooth $N^2$-dimensional manifold consisting of all $N$-dimensional subspaces of $\R^{2N}$ and the Lagrangian Grassmannian is an analytic submanifold of the Grassmannian manifold. Its topology can be described by the metric 
\begin{equation}\label{eq:gap-metric}
d(V, W)\= \norm{P_V- P_W},
\end{equation}
where $P_V$ and $P_W$ denote the orthogonal projections in $\R^{2N}$ onto the subspaces $V$ and $W$, respectively. We refer to the metric defined in Equation \eqref{eq:gap-metric} as the gap  metric.  
\begin{lem}\label{thm:stabili-Lagrangiani}
	We assume that the \BS-condition holds. For every $\tau_0\in [0,+\infty)$, 
	the subspace $E^s(\tau_0)$ defined in Equation 
	\eqref{eq:stabiledellaflia} and $ E^s_*$ defined in Equation \eqref{eq:stabile-infinito} belong to the Lagrangian 
	Grassmannian manifold  $\Lagr(N)$ of  $(\R^{2N}, \omega)$. 
	Moreover the path $\tau_0 \mapsto E^s(\tau_0)$ is smooth. 
\end{lem}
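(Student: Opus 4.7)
The plan is to extract three ingredients in turn: dimension count, isotropy, and smooth dependence.

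First I would show that the stable subspaces are well defined and have dimension $N$. Proposition \ref{prop:bs-iperb} gives, under \BS, that $H^*$ is hyperbolic, so its stable subspace $E^s_*$ has dimension exactly $N$: indeed $H^*$ is infinitesimally symplectic, hence its spectrum is invariant under $\lambda\mapsto -\lambda$, and under hyperbolicity precisely $N$ of the $2N$ eigenvalues (counted with multiplicity) have negative real part. For the non-autonomous system, Lemma \ref{lem:conv_matrici} implies $H(\tau)\to H^*$ as $\tau\to+\infty$, so that \eqref{eq:Ham-L2-coll} is asymptotically autonomous and admits an exponential dichotomy on some half-line $[T,+\infty)$ by the standard roughness theorem; pulling back with the invertible $\psi_{\tau_0}(\cdot)$ then extends the dichotomy to $[\tau_0,+\infty)$ and produces an $N$-dimensional stable subspace $E^s(\tau_0)\subset\R^{2N}$ that coincides with the one defined in Equation \eqref{eq:stabiledellaflia}.

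Next I would verify isotropy. Because $\psi_{\tau_0}(\tau)$ is symplectic for every $\tau$, for any $u,v\in E^s(\tau_0)$ one has
\begin{equation*}
\omega(u,v)=\omega\bigl(\psi_{\tau_0}(\tau)u,\psi_{\tau_0}(\tau)v\bigr)\qquad\forall\tau\geq\tau_0.
\end{equation*}
Letting $\tau\to+\infty$, both $\psi_{\tau_0}(\tau)u$ and $\psi_{\tau_0}(\tau)v$ tend to $0$ by definition of $E^s(\tau_0)$, hence $\omega(u,v)=0$. Thus $E^s(\tau_0)$ is isotropic and $N$-dimensional, so it is Lagrangian. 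The identical argument applied to $e^{\tau H^*}$ (which is symplectic and sends $E^s_*$ to $0$) shows that $E^s_*\in\Lagr(N)$.

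Finally, for smoothness in $\tau_0$, I would use the transport identity
\begin{equation*}
E^s(\tau_0)=\psi_{0}(\tau_0)^{-1}\,E^s(0),
\end{equation*}
which is immediate from the cocycle property $\psi_{0}(\tau)=\psi_{\tau_0}(\tau)\,\psi_{0}(\tau_0)$ and the definition of the stable space. Since $\tau_0\mapsto\psi_0(\tau_0)$ is a smooth curve in $\Sp(2N)$, and the action $\Sp(2N)\times\Lagr(N)\to\Lagr(N)$, $(A,L)\mapsto A^{-1}L$, is smooth with respect to the gap metric \eqref{eq:gap-metric}, the composition $\tau_0\mapsto E^s(\tau_0)$ is smooth into $\Lagr(N)$.

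The only delicate point is the first step: invoking roughness of exponential dichotomies on a half-line for an asymptotically autonomous Hamiltonian system and identifying its stable bundle with $E^s(\tau_0)$ as defined by the pointwise decay condition \eqref{eq:stabiledellaflia}. Once the dichotomy yields both the correct dimension and the uniform exponential decay needed to pass to the limit inside $\omega$, the Lagrangian property and the smooth dependence follow cleanly from the symplectic cocycle structure.
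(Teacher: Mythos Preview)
Your isotropy argument is exactly the paper's: invariance of $\omega$ along the flow plus decay on the stable space forces $\omega(u,v)=0$. Where you diverge is in the remaining two points. For the dimension of $E^s(\tau_0)$ the paper does not invoke roughness of dichotomies directly; instead it cites \cite[Proposition~1.2]{AM03} to obtain the gap convergence $E^s(\tau_0)\to E^s_*$ and then uses local constancy of the dimension on the Grassmannian to transfer $\dim E^s_*=N$ back to every $E^s(\tau_0)$. For smoothness the paper again cites an external result \cite[Theorem~3.1]{AM03}, whereas you give the explicit transport argument via the evolution cocycle. Your route is more self-contained and makes the mechanism visible; the paper's is shorter but treats \cite{AM03} as a black box.

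One small slip: from $\psi_{\tau_0}(\tau)=\psi_0(\tau)\,\psi_0(\tau_0)^{-1}$ one gets $E^s(\tau_0)=\psi_0(\tau_0)\,E^s(0)$, not $\psi_0(\tau_0)^{-1}E^s(0)$. This is harmless for your conclusion since the smooth action of $\Sp(2N)$ on $\Lagr(N)$ works either way, but you should correct the formula.
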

\begin{proof}
    We start to recall that the dimension of an isotropic subspace in $\R^{2N}$ is at most $N$ and an isotropic subspace is Lagrangian if and only if its dimension is precisely $N$.  
    
    Let $v, w : [0,+\infty) \to \R^{2N}$ be solutions of the Hamiltonian system given in Equation \eqref{eq:Ham-L2-coll}. Then $\omega\big(v(\tau), w(\tau)\big)$ is constant for all $\tau \in [0,+\infty)$. In fact, by a direct computation, we get
   {\begin{multline}
    \dfrac{d}{d\tau}\Big(\omega\big(v(\tau), w(\tau)\big)\Big)= \dfrac{d}{d\tau}\Big(\langle J\,v(\tau), w(\tau)\rangle\Big)= \langle Jv'(\tau), w(\tau) \rangle + \langle J v(\tau), w'(\tau)\rangle\\= 
    \langle B(\tau)\,v(\tau), w(\tau) \rangle + \langle  v(\tau), \trasp{J}w'(\tau)\rangle=\langle B(\tau)\,v(\tau), w(\tau) \rangle -\langle  v(\tau), B(\tau)\, w(\tau)\rangle=0
    \end{multline}}
    where the last equality follows being $B(\tau)$ symmetric for every $\tau \in [0,+\infty)$. 
    
    Now, if $v(\tau_0), w(\tau_0) \in E^s(\tau_0)$ for some $\tau_0 \in [0,+\infty)$, this clearly implies that $\omega\big(v(\tau), w(\tau)\big)=0$. Since the same arguments applies to the Equation \eqref{eq:Ham-L2-coll-limit-nuovo}, we conclude that the spaces $E^s(\tau_0), E^s_*$ are isotropic. Let us observe that $E^s_*$ is the generalized eigenspaces of $H^*$ with respect to eigenvalues having negative real part. Since $H^*$ is hyperbolic (cf. Proposition \ref{prop:bs-iperb}), we conclude that $E^s_* \oplus E^u_*=\R^{2N}$. Thus, these generalized eigenspaces are of dimension $N$ and being isotropic, they are  Lagrangian. 
    
	Now, by invoking \cite[Proposition 1.2]{AM03}, we obtain 
	that 
	\begin{equation}\label{eq:conv}
	\lim_{\tau_0 \to +\infty} E^s(\tau_0)= E^s_*
	\end{equation}
	where the limit is meant in the gap topology (i.e. the metric topology induced 
	by the gap metric, defined above) of the Lagrangian Grassmannian. As direct consequence of the convergence result stated in Equation 
	\eqref{eq:conv} and by taking into account that $E^s_*$ is a Lagrangian subspace {and that the dimension is a locally constant function (being a discrete-valued  continuous function on the Lagrangian Grassmannian),} it readily follows that $E^s(\tau) \in \Lagr(N)$ for every $\tau \in [0,+\infty)$.
	
	The second claim follows by \cite[Theorem 3.1]{AM03}.
	For further details, we refer the  interested reader to 
	\cite{HP17} and references therein. This conclude the  proof. 
\end{proof}

We are now entitled to define the geometrical 
index. 
\begin{defn}\label{def:geo-index}
	Let $y$ be a $s_0$-a.s. and  $L_0\= \R^N \times \{0\} \subset \R^N \times \R^N$ be 
	the (horizontal) Dirichlet Lagrangian. We define the {\em geometrical index\/} of $y$ as the integer given by 
	\[
	\igeo(y)\= -\iCLM\big(E^s(\tau_0), L_0; \tau_0 \in [0,+\infty)\big),
	\]
	where the integer $\iCLM$ is the Maslov index for ordered pairs of paths of 
	Lagrangian subspaces.
\end{defn}
\begin{defn}\label{def:BND}
	The $s_0$-homothetic Hamiltonian system ($s_0$-asymptotic Hamiltonian 
	system) given in  Equation \eqref{eq:H-coll-1-limit-nuovo} (resp. in Equation \eqref{eq:Ham-L2-coll})
	is termed to satisfy the  {\em boundary non-degenerate condition, BND-condition  for short\/}
	if 
	\begin{equation}\label{eq:hyperbolicity-analogous-L}
	L_0 \cap E^s_*=\{0\}\quad(\textrm{resp. } L_0 \cap E^s(0)=\{0\} \textrm{ 
		and } L_0 \cap E^s_*=\{0\}).
	\end{equation}
\end{defn}

\begin{lem}\label{thm:BND1}
 If the \BS-condition holds, then the $s_0$-limit Hamiltonian system given in 
 Equation \eqref{eq:Ham-L2-coll-limit-nuovo} is BND. 
\end{lem}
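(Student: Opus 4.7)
\emph{Plan of the proof.} By Proposition \ref{prop:bs-iperb}, the \BS-condition ensures that $H^*$ is hyperbolic, and Lemma \ref{thm:stabili-Lagrangiani} then guarantees that $E^s_*$ is an $N$-dimensional Lagrangian subspace of $(\R^{2N},\omega)$. I intend to show $L_0\cap E^s_*=\{0\}$ by exploiting the $\diamond$-product splitting of $H^*$ exhibited in the proof of Proposition \ref{prop:bs-iperb}. Inspecting the explicit form of $P_0$, $Q_0$ and $\widetilde{R_0}$ (Lemma \ref{lem:spectra}) along the decomposition $\X=\R s_0\oplus s_0^\perp$, one checks that $B^*$ is block diagonal with respect to the induced symplectic splitting of $\R^{2N}$; hence $H^*=H^*_r\diamond H^*_a$, $E^s_*=E^s_r\oplus E^s_a$ and $L_0=L_0^r\oplus L_0^a$, where each $L_0^\bullet$ is the corresponding Dirichlet (``configuration-zero'') axis. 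It therefore suffices to show that both partial intersections $E^s_r\cap L_0^r$ and $E^s_a\cap L_0^a$ are trivial.

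The radial block $H^*_r$ is a $2\times 2$ Hamiltonian matrix with eigenvalues $\pm\sqrt{2\pot(s_0)/(1+c_\alpha)}$ (real and non-zero regardless of \BS), and a direct calculation shows that its one-dimensional stable eigendirection is spanned by a vector whose ``configuration'' component does not vanish, so $E^s_r\cap L_0^r=\{0\}$. For the angular block one has $H^*_a=JT^*|_{s_0^\perp\oplus s_0^\perp}$, which by Lemma \ref{thm:tecnico2} is symplectically conjugate to $J\widehat{T^*}|_{s_0^\perp\oplus s_0^\perp}$ via the upper-triangular symplectic matrix
$P=\begin{bmatrix}I & c_\alpha\overline\delta_\alpha I\\ 0 & I\end{bmatrix}$,
which crucially fixes $L_0^a$ pointwise. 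The question thus reduces to computing the stable subspace of $J\widehat{T^*}|_{s_0^\perp}$. Under \BS\ the symmetric endomorphism $C:=\frac{(2-\alpha)^2}{8}\pot(s_0)\,I+D^2\widetilde\pot(s_0)|_{s_0^\perp}$ is positive definite (this is precisely the content of \BS\ restricted to $s_0^\perp$, combined with Lemma \ref{lem:spectra}), so $J\widehat{T^*}|_{s_0^\perp}=\begin{bmatrix}0 & -C\\ -I & 0\end{bmatrix}$ has real non-zero eigenvalues $\pm\sqrt{c}$, $c\in\sigma(C)$, and a short eigenvector computation identifies its stable subspace as the graph $\{(\sqrt{C}\,v,v):v\in s_0^\perp\}$. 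This graph meets $L_0^a$ only at the origin, and since $P$ fixes $L_0^a$, the same is true for $E^s_a$.

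Combining the two partial intersections gives $L_0\cap E^s_*=\{0\}$, which is the BND property. The main obstacle is essentially bookkeeping: one has to extract the block decomposition of $B^*$ along $\X=\R s_0\oplus s_0^\perp$ from the definitions, to fix the ``position/momentum'' convention under which $L_0=\R^N\times\{0\}$ coincides with $\{q=0\}$, and to verify that the conjugation $P$ of Lemma \ref{thm:tecnico2} preserves $L_0$. Once the $\diamond$-product structure is written out explicitly, the remaining work is a routine computation of stable eigenvectors for a hyperbolic Hamiltonian matrix.
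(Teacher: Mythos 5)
Your proof is correct and follows essentially the same route as the paper's: exploit the $\diamond$-product decomposition of $H^*$, reduce each factor to anti-diagonal form via the symplectic conjugation of Lemma \ref{thm:tecnico}, and read off the stable eigendirections. The differences are organizational rather than substantive. The paper diagonalizes $D^2\widetilde{\pot}(s_0)$ completely and writes $H^*=K_0^*\diamond\cdots\diamond K_N^*$ as a $\diamond$-product of scalar $2\times 2$ blocks, computing the stable eigenvector $e_-=(1,-\sqrt{p/r})$ for each $\widehat{K}_j^*=\begin{bmatrix}0&r\\p&0\end{bmatrix}$; you instead keep a coarser $\R s_0\oplus s_0^{\perp}$ split and describe the angular stable subspace at once as the graph $\bigl\{(\pm\sqrt{C}\,v,v):v\in s_0^{\perp}\bigr\}$, which is the vector-valued form of the same computation. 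A genuine merit of your write-up is that you explicitly record that the upper-triangular symplectic conjugation $P=\begin{bmatrix}\Id&-A\\0&\Id\end{bmatrix}$ from Lemma \ref{thm:tecnico} fixes the Dirichlet Lagrangian $L_0=\R^N\times\{0\}$ pointwise, so that the transversality $E^s_*\cap L_0=\{0\}$ can indeed be checked in the conjugated frame; the paper leaves this step implicit but it is necessary. You also invoke Proposition \ref{prop:bs-iperb} and Lemma \ref{thm:stabili-Lagrangiani} to set up the argument, which is harmless but not needed: the direct eigenvector computation already yields both hyperbolicity and the Lagrangian dimension count. One small sign discrepancy: with the convention implicit in the paper's displays one finds $J\widehat{T^*}=\begin{bmatrix}0&C\\ \Id&0\end{bmatrix}$ rather than $\begin{bmatrix}0&-C\\-\Id&0\end{bmatrix}$, so the stable subspace is $\{(-\sqrt{C}\,v,v)\}$; this is immaterial for transversality to $L_0=\{(\cdot,0)\}$, but worth aligning.
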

\begin{proof}
In order to prove this result, we start to consider the $2 \times 2$ Hamiltonian matrix
\begin{equation}
K=
\begin{bmatrix}
 0& r\\
 p&0
\end{bmatrix}, \qquad p, r >0.
\end{equation}
By a direct computation, the eingenvalues are $\lambda_\pm\=\pm\sqrt{pr}$
whose corresponding eigenvectors 
\[
e_\pm\= \begin{bmatrix}
       1\\
\pm \sqrt{p/r}      
\end{bmatrix}.
\]
By this, it follows that the stable space is generated by the eigenvector $e_-$ and since $p \neq0$ this 1D-
subspace is always transversal to $L_0 = (1,0)$. 
Since the 
restriction of $R^*$ to $T_{s_0}\mathcal E$ reduced to $\dfrac{(2-\alpha)^2}{8} \pot(s_0)\,I$, in a suitable basis, we can write
\begin{multline}
  \begin{bmatrix}
  \Id & -c_\alpha \overline \delta_\alpha\Id \\
  -c_\alpha \overline \delta_\alpha\Id & ( c_\alpha \overline \delta_\alpha)^2\Id- \widetilde R_0^*
 \end{bmatrix}\\ = \begin{bmatrix}
  1 & -c_\alpha \overline \delta_\alpha \\
  -c_\alpha \overline \delta_\alpha & (c_\alpha \overline \delta_\alpha)^2 - 
  \dfrac{(2-\alpha)^2}{8} \pot(s_0)- \mu_1
 \end{bmatrix}\diamond \cdots \diamond
 \begin{bmatrix}
  1 & -c_\alpha \overline \delta_\alpha \\
  -c_\alpha \overline \delta_\alpha & (c_\alpha \overline \delta_\alpha)^2 - 
  \dfrac{(2-\alpha)^2}{8} \pot(s_0)-\mu_N
 \end{bmatrix}
\end{multline}
where for $j=1, \dots, N$, $\mu_j$ are the eigenvalues of $D^2 \pot|_{\mathcal E}(s_0)$. 
We set
\begin{multline}
 S^*_0\= \begin{bmatrix}
  \dfrac{1}{1+ c_\alpha} & 0 \\
  0 &-\left[ \dfrac{(2-\alpha)^2}{8}\pot(s_0)+ c_\alpha \overline \delta_\alpha^2\right]
 \end{bmatrix} \textrm{ and }\\
 S^*_j\=  \begin{bmatrix}
  1 & -c_\alpha \overline \delta_\alpha \\
  -c_\alpha \overline \delta_\alpha & (c_\alpha \overline \delta_\alpha)^2 - 
  \dfrac{(2-\alpha)^2}{8} \pot(s_0)-\mu_j
 \end{bmatrix}
\end{multline}
and we observe that 
\[
H^*= K_0^* \diamond \cdots \diamond K_N^*
\]
where $K_j^*\= J S_j^*$, $j=0, \dots, N$.  
By Lemma \ref{thm:tecnico} applied to each $K_j^*$, we 
already know that it is symplectically similar to 
\[
  \widehat K_0^*= 
   \begin{bmatrix}
    0 &  \dfrac{(2-\alpha)^2}{8}\pot(s_0)+ c_\alpha \overline \delta_\alpha^2\\
    \dfrac{1}{1+ c_\alpha} & 0 
   \end{bmatrix},
  \widehat K_j^*= 
   \begin{bmatrix}
    0 & \dfrac{(2-\alpha)^2}{8} \pot(s_0)+ \mu_j\\
    1 & 0 
   \end{bmatrix} \text{ for }j=1,\ldots, n. 
\]
By applying the previous analysis carried out in the $2 \times 2$ case, we obtain that 
$E^s_* \cap L_0 =\{0\}$  which is the same that $s_0$-limit Hamiltonian system given in Equation \eqref{eq:Ham-L2-coll-limit-nuovo} is BND. This conclude the proof. 
\end{proof}
\begin{rem}
The Definition \ref{def:BND} is a  transversality condition between two 
Lagrangian subspaces  which has the same flavour of  the asymptotic 
hyperbolicity  condition (transversality between the stable and unstable subspaces) 
required in developing an index theory for homoclinic
solutions of  Hamiltonian systems.  
\end{rem}

\section{The Index Theorem}\label{sec:main} 

This section  is devoted to the proof of Morse type index Theorem  
for $s_0$-asymptotic motions by relating in  a precise way the 
spectral index to the geometrical index introduced in Section \ref{sec:fredholmness}.  
For the sake of the reader the proof will be scattered along the whole section. A key result of the section is Lemma \ref{lem:utile}, where we prove that the spectral index is the  spectral flow of a (continuous) path of bounded Fredholm quadratic forms. 

\begin{thm}\label{thm:indextheorem} 
({\bf Index theorem for a $s_0$-a.s.}) 
Let $y$ be  a $s_0$-a.s. and we assume that  
the \BS-condition holds.
Then we have
\begin{equation}\label{eq:equality}
  \ispec(y)=\igeo(y).
\end{equation}
\begin{figure}[ht]
\centering
\includegraphics[scale=0.25]{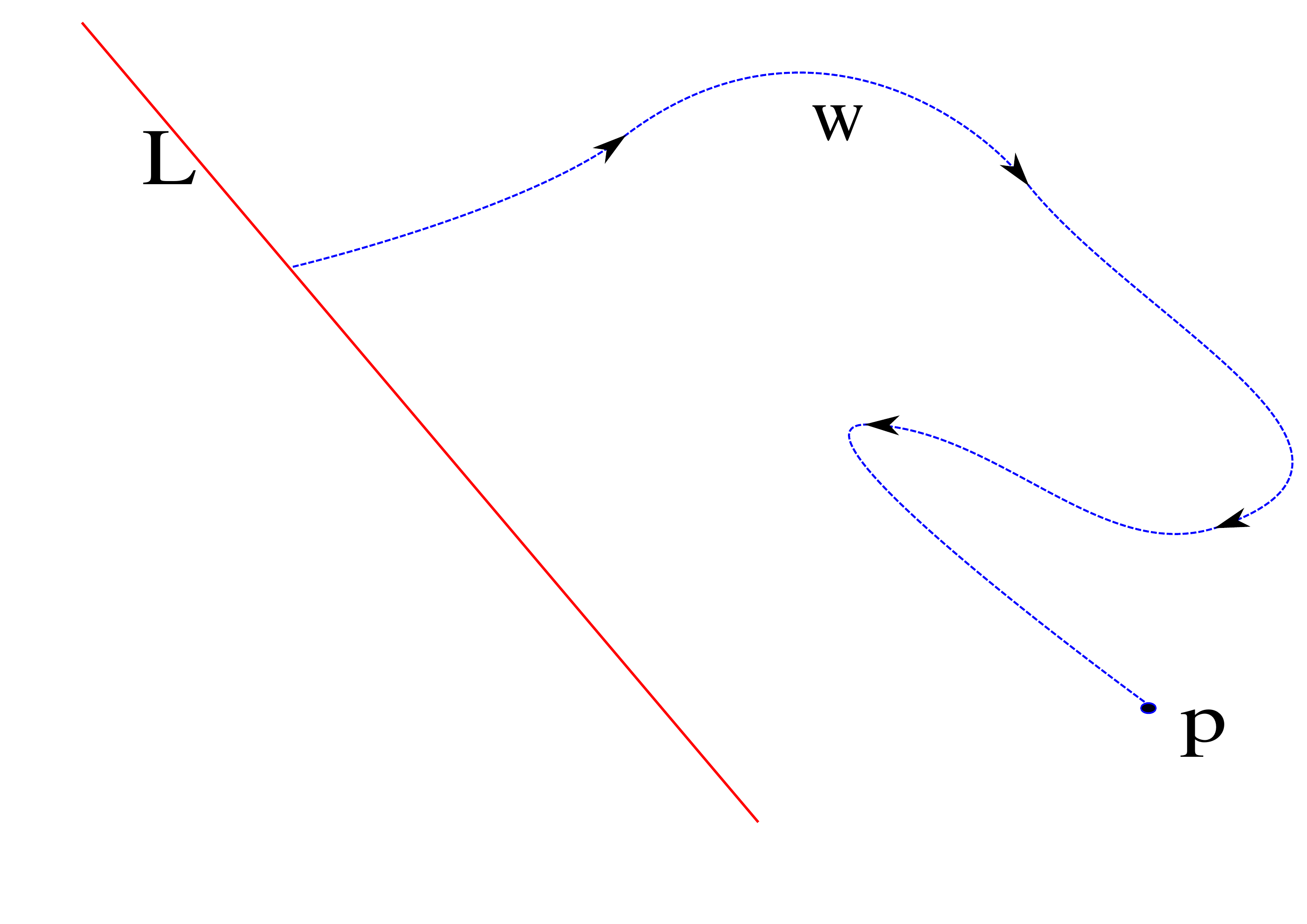}
\caption{A future halfclinic orbit (dashed blue line) between $L$ and the point 
$p$.}\label{fig:halfline}
\end{figure}
\end{thm}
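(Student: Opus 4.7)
The plan is to exhibit both $\ispec(y)$ and $\igeo(y)$ as crossing counts along a single one-parameter family of Fredholm quadratic forms, and then to match the two counts crossing by crossing.

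For each $\tau_0\ge 0$, let $\qAS^{\tau_0}$ be the quadratic form of Definition~\ref{def:index_quadr_AS} with the integral now taken on $[\tau_0,+\infty)$ and with test functions in $W^{1,2}_0([\tau_0,+\infty),\X)$. After the translation $\tau\mapsto\tau-\tau_0$ I view $\tau_0\mapsto \qAS^{\tau_0}$ as a continuous path of bounded quadratic forms on the fixed space $W^{1,2}_0([0,+\infty),\X)$. Each translated problem has the same $s_0$-limit system, so under \BS\ Corollary~\ref{cor:tuttoinsieme} applies uniformly to every member of the path: it is Fredholm and essentially positive throughout. Moreover, by Lemma~\ref{lem:conv_matrici} the coefficients of $\qAS^{\tau_0}$ converge (uniformly on $[0,+\infty)$) to those of the $s_0$-limit form $\qlim$, and by Lemma~\ref{lem:ess-pos-limit}(1) the limit $\qlim$ is an equivalent norm on $W^{1,2}_0([0,+\infty),\X)$. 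Hence $\iMor(\qAS^0)=\ispec(y)$, while $\iMor(\qAS^{\tau_0})=0$ for $\tau_0$ large enough; therefore
\[
\spfl\bigl(\qAS^{\tau_0};\ \tau_0\in[0,+\infty)\bigr)\;=\;\ispec(y).
\]

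I next match the crossings of this path with those entering the Maslov-index definition of $\igeo(y)$. If $u\in\ker\qAS^{\tau_0^*}$, then the curve $z(\tau)=\bigl(P(\tau)u'(\tau)+Q(\tau)u(\tau),\,u(\tau)\bigr)$ is an $L^2$-solution of the Hamiltonian system \eqref{eq:Ham-L2-coll} on $[\tau_0^*,+\infty)$ satisfying $z(\tau_0^*)\in L_0$; by \eqref{eq:stabiledellaflia} this is the condition $z(\tau_0^*)\in L_0\cap E^s(\tau_0^*)$, and the assignment $u\mapsto z(\tau_0^*)$ gives a canonical isomorphism $\ker\qAS^{\tau_0^*}\cong L_0\cap E^s(\tau_0^*)$. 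In particular the crossing instants of the path of forms coincide with those of the Lagrangian path $\tau_0\mapsto E^s(\tau_0)$ with respect to $L_0$. A direct integration by parts, in the spirit of \cite[Theorem~4.1]{RS05a} and \cite[\S3]{HP17}, then identifies the crossing form of $\qAS^{\tau_0}$ at $\tau_0^*$, restricted to its kernel, with the Maslov crossing form $\omega|_{L_0\cap E^s(\tau_0^*)}$, up to a uniform sign generated by the positive-definite endomorphism $P$. After a small generic perturbation making every crossing regular—available thanks to the \BS-BND condition at infinity ensured by Lemma~\ref{thm:BND1}—the crossing formulas for the spectral flow and for the Maslov index both reduce to the same sum of signatures, so
\[
\ispec(y)\;=\;-\iCLM\bigl(E^s(\tau_0),L_0;\ \tau_0\in[0,+\infty)\bigr)\;=\;\igeo(y).
\]

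The main obstacle will be the sign-precise matching of the two crossing forms in the previous step: the kernel bijection is immediate, but pairing signatures correctly requires one to account, via a careful integration by parts at $\tau_0^*$, for the boundary contribution at the left endpoint (the right endpoint contributes nothing thanks to the $L^2$-decay forced by BND), and to compare it with the infinitesimal variation $\tfrac{d}{d\tau_0}(L_0\cap E^s(\tau_0))$, which by \eqref{eq:H-coll-1} is governed by the restriction of $B(\tau_0^*)$ to the intersection. A secondary technical point is the continuity (and uniform Fredholm/essential positivity) of $\tau_0\mapsto \qAS^{\tau_0}$ up to $+\infty$, which I would establish from Lemma~\ref{lem:conv_matrici} by a standard $\varepsilon$-tail argument isolating a large compact piece from the exponentially-close asymptotic tail.
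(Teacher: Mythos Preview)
Your approach is correct and genuinely different from the paper's. The paper does \emph{not} vary the left endpoint $\tau_0$; instead it deforms $\qAS$ by a spectral parameter, setting $\qAS_\sigma=\qAS+\sigma\|\cdot\|^2_{W^{1,2}_0}$, shows $\ispec(y)=\spfl(\qAS_\sigma;\sigma\in[0,\sigma_0])$, passes from $\qAS_\sigma$ to the Sturm--Liouville path $\mathcal T_\sigma$ and then to the Hamiltonian path $\mathcal A_\sigma$, and finally uses a two--parameter rectangle $(\tau_0,\sigma)\in[0,\infty)\times[0,\sigma_0]$ together with \cite[Theorem~1]{HP17} to convert the $\sigma$--direction Maslov index into the $\tau_0$--direction one that defines $\igeo$. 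Your route is the classical ``moving endpoint'' Morse index argument: it is more direct, self-contained (it avoids the black-box reference to \cite{HP17} and the rectangle homotopy), and closer in spirit to the geodesic Morse index theorem; the price is that you must compute the crossing forms by hand, whereas in the paper the $\sigma$--crossing form is simply the identity.

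On the ``main obstacle'' you flag: the matching actually goes through more cleanly than you suggest, and no generic perturbation is needed. If $u\in\ker\qAS^{\tau_0^*}$, so $u(\tau_0^*)=0$ and $\mathcal T u=0$ on $[\tau_0^*,\infty)$, then differentiating $\qAS^{\tau_0}$ in $\tau_0$ and integrating by parts using the Euler--Lagrange equation and the $L^2$--decay at infinity (guaranteed by \BS) leaves precisely the boundary term $\langle P(\tau_0^*)u'(\tau_0^*),u'(\tau_0^*)\rangle_M$. On the Maslov side, taking the solution curve $z(\tau)=\psi_{\tau_0^*}(\tau)z(\tau_0^*)\in E^s(\tau)$ gives $\dot z(\tau_0^*)=JB(\tau_0^*)z(\tau_0^*)$, and since $z(\tau_0^*)=(P(\tau_0^*)u'(\tau_0^*),0)\in L_0$ one finds that the crossing form is $\langle B(\tau_0^*)z(\tau_0^*),z(\tau_0^*)\rangle=\langle P(\tau_0^*)u'(\tau_0^*),u'(\tau_0^*)\rangle_M$ as well. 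Both are positive definite by Lemma~\ref{lem:spectra}, so every crossing is automatically regular and contributes $+\dim\bigl(L_0\cap E^s(\tau_0^*)\bigr)$ on each side. The only remaining care is the bookkeeping at the endpoints: at $\tau_0=+\infty$ Lemma~\ref{thm:BND1} gives $L_0\cap E^s_*=\{0\}$, and at $\tau_0=0$ you must check that the boundary conventions for $\spfl$ and for $\iCLM$ (with the paper's ordering of arguments in Definition~\ref{def:geo-index}) agree, which is a routine but sign-sensitive verification.
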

\begin{rem}
The equality given in Equation \eqref{eq:equality} allow us to mirror the problem of 
computing the Morse index of a $s_0$-a.s. (integer associated to an unbounded Fredholm operator
in an {\em infinite dimensional\/} separable Hilbert space) into an 
intersection index between a curve of Lagrangian 
subspaces and a {\em finite dimensional\/} transversally oriented variety. 
\end{rem}
\begin{rem}
The idea behind the proof of this result  relies on the fact that the spectral index can be 
related to another topological invariant known in literature as spectral flow. 
Moreover under suitable transversality assumptions, 
the local contribution to the spectral flow as well as to the Maslov index are given by the sum of the signature of quadratic forms on a finite dimensional 
vector spaces. 
In order to conclude the proof it is enough to prove that these local contributions coincide. 
\end{rem}
For  $\sigma \in [0,+\infty)$,  we consider the quadratic form defined by
\begin{equation}\label{eq:quadratic-sigma}
\qAS_\sigma(u)\= \qAS(u) + \sigma \norm{u}^2_{W_0^{1,2}}
\end{equation}
where $\qAS$ is the quadratic form associated to the index form defined in Equation \eqref{eq:IAS}.
By arguing precisely as in Theorem \ref{thm:key1fredholm} the following 
result holds. 
\begin{lem}\label{thm:famiglia-sono fredholm}
For each $\sigma \in [0,+\infty)$,  the quadratic form $\mathcal Q_\sigma$ given in Equation \eqref{eq:quadratic-sigma} is 
Fredholm and essentially positive if and only if the \BS-condition holds. 
\end{lem}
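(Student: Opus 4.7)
The plan is to mimic the arguments of Theorems~\ref{thm:key1fredholm} and~\ref{prop:key1-una freccia}, tracking the effect of the non-negative perturbation $\sigma\|\cdot\|^2_{W^{1,2}_0}$. Writing
$\sigma\|u\|^2_{W^{1,2}_0}=\sigma\int_0^{+\infty}(\Mnorm{u}^2+\Mnorm{u'}^2)\,d\tau$,
one sees at once that $\qAS_\sigma$ is the index form associated with the Sturm--Liouville operator $\mathcal T_\sigma$ obtained from $\mathcal T$ by replacing $P(\tau)$ with $P(\tau)+\sigma I$ and $\widetilde R(\tau)$ with $\widetilde R(\tau)+\sigma I$, while $Q(\tau)$ is left unchanged. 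In particular the coefficients at $+\infty$ converge to the new limit matrices $P_0+\sigma I$, $Q_0$ and $\widetilde{R_0}+\sigma I$.

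For the implication \BS\ $\Longrightarrow$ ``$\qAS_\sigma$ is Fredholm and essentially positive for every $\sigma\in[0,+\infty)$'', I would proceed in two independent steps. First, the Riesz-representation argument in the proof of Theorem~\ref{thm:key1fredholm} goes through verbatim and reduces Fredholmness of $\qAS_\sigma$ to Fredholmness of $\mathcal T_\sigma$, which via \cite[Theorem 4.1]{RS05a} is equivalent to hyperbolicity of the limit Hamiltonian system built out of $(P_0+\sigma I,\,Q_0,\,\widetilde{R_0}+\sigma I)$. The block decomposition with respect to the splitting $\R s_0\oplus s_0^{\perp_M}$ used in Proposition~\ref{prop:bs-iperb} is undisturbed by a scalar perturbation, so the symplectic similarity of Lemma~\ref{thm:tecnico2} still applies and reduces hyperbolicity to the positive definiteness of $\frac{(2-\alpha)^2}{8}\pot(s_0)\,I+D^2\widetilde U(s_0)+\sigma I$; under \BS\ this matrix is already positive definite at $\sigma=0$, and adding $\sigma I$ with $\sigma\geq 0$ only shifts the spectrum upward. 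Second, for essential positivity I exploit the pointwise inequality $\qAS_\sigma(u)\geq\qAS(u)$, which gives the inclusion of negative spectral subspaces $E_-(\qAS_\sigma)\subset E_-(\qAS)$; Theorem~\ref{prop:key1-una freccia} then yields $\dim E_-(\qAS_\sigma)\leq\dim E_-(\qAS)<+\infty$ at once.

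For the converse, it suffices to specialise to $\sigma=0$: the hypothesis becomes that $\qAS_0=\qAS$ is Fredholm and essentially positive, and Corollary~\ref{cor:tuttoinsieme} forces \BS\ to hold.

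I do not expect any genuine obstacle, since all the analytic and symplectic machinery has already been set up in Section~\ref{sec:fredholmness}. The only point that needs some care is to verify that the scalar shift $\sigma I$ is compatible with the conjugation in Lemma~\ref{thm:tecnico} and with the orthogonal splitting used to derive hyperbolicity; this is immediate, because neither the antisymmetric piece encoded by $Q_0$ nor the identity $D^2\widetilde U(s_0)\,s_0=0$ is affected by adding $\sigma I$, so the reduction to the $2\times 2$ block analysis of Proposition~\ref{prop:bs-iperb} carries over unchanged.
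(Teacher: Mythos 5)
Your proof is correct in outline and is essentially the argument the paper has in mind (the paper itself offers no more than ``by arguing precisely as in Theorem~\ref{thm:key1fredholm}''). There are, however, a few points worth flagging.

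First, note an internal discrepancy in the paper that you should be aware of: Equation~\eqref{eq:quadratic-sigma} is written with the $W^{1,2}_0$-norm shift, but the paper's Equations~\eqref{eq:operator-sigma}, \eqref{eq:fliasturm}, and the definition of $B_\sigma$ only shift $\widetilde R$ by $\sigma I$, leaving $P$ untouched; this corresponds to a $\sigma\norm{u}^2_{L^2}$ perturbation, not a $\sigma\norm{u}^2_{W^{1,2}_0}$ one. You took \eqref{eq:quadratic-sigma} at face value, which is the harder route, but both lead to the same hyperbolicity criterion $\widetilde{R_0}+\sigma I>0$.

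Second, and this is the genuine (if small) gap: you assert that ``the symplectic similarity of Lemma~\ref{thm:tecnico2} still applies'' after the scalar shift. It does not, literally. Lemma~\ref{thm:tecnico} requires the upper-left block of $Z$ to be the identity, but after replacing $P_0$ by $P_0+\sigma I$ the restriction to $s_0^{\perp_M}$ gives $(1+\sigma)\Id$. The conclusion you draw is nonetheless correct: the key fact is the more general observation that for a Hamiltonian matrix of the form $J B$ with $B=\begin{bmatrix}P^{-1}&-P^{-1}Q\\-\trasp{Q}P^{-1}&\trasp{Q}P^{-1}Q-R\end{bmatrix}$, $P>0$, $Q$ symmetric, conjugation by the symplectic matrix $\begin{bmatrix}\Id&Q\\0&\Id\end{bmatrix}$ brings $B$ to $\begin{bmatrix}P^{-1}&0\\0&-R\end{bmatrix}$, from which hyperbolicity of $JB$ is equivalent to $R>0$. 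So you either need to invoke this slightly generalized form of Lemma~\ref{thm:tecnico}, or rescale; as written, the citation of Lemma~\ref{thm:tecnico2} is a step that does not quite go through.

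Third, the inclusion $E_-(\qAS_\sigma)\subset E_-(\qAS)$ is not a literal inclusion of spectral subspaces of two different operators; what the pointwise inequality $\qAS_\sigma\geq\qAS$ gives, by a standard min-max argument, is $\iMor(\qAS_\sigma)\leq\iMor(\qAS)$. That is all you actually use (combined with the observation that a selfadjoint Fredholm operator with finite Morse index is automatically essentially positive), so the argument survives, but the statement should be phrased in terms of dimensions, not set inclusions.

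The converse via specialising to $\sigma=0$ and Corollary~\ref{cor:tuttoinsieme} is exactly the right (and only sensible) reading of the lemma, since for large $\sigma$ the form is Fredholm and essentially positive regardless of \BS.
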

Integrating by parts in Equation \eqref{eq:quadratic-sigma} we obtain 
the  second order selfadjoint Fredholm  operator 
\begin{equation}\label{eq:operator-sigma}
\mathcal T_\sigma\= \mathcal T + \sigma \Id, \qquad \sigma \in [0,+\infty).
\end{equation}
Explicitly, we have
\begin{equation}\label{eq:fliasturm}
[\mathcal T_\sigma u](\tau)\=\dfrac{d}{d\tau}\Big(P(\tau)u'+ Q(\tau)u\Big) + \trasp{Q}(\tau)+ 
\widetilde R_\sigma(\tau) u, \qquad \tau \in [0,+\infty)
\end{equation}
where   $\widetilde R_\sigma(\tau)\= \widetilde R(\tau)+\sigma \Id_N$.
We define the symmetric matrix $D_\sigma(\tau)$ as 
\[
 D_\sigma(\tau)\= 
 \begin{bmatrix}
  P(\tau)& Q(\tau)\\
  \trasp{Q}(\tau) & R_\sigma(\tau)
 \end{bmatrix}, \qquad \tau \in [0,+\infty).
\]
\begin{lem}\label{lem:nuovo}
$D_\sigma(\tau)$ is positive definite 
if and only if $\widetilde R_\sigma(\tau) - \trasp{Q}(\tau) P^{-1}(\tau) Q(\tau) $ is. 
\end{lem}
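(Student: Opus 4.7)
The plan is to recognize that this is exactly a Schur complement statement for a symmetric block matrix, once one knows the $(1,1)$-block $P(\tau)$ is positive definite. The key input is Lemma \ref{lem:spectra}, which tells us that the spectrum of $P(\tau)$ equals $\{1,\,1+c_\alpha\}$ independently of $\tau$ and hence that $P(\tau)$ is invertible and positive definite for every $\tau\in[0,+\infty)$. Once $P(\tau)$ is positive definite, the Schur complement $\widetilde R_\sigma(\tau) - \trasp{Q}(\tau) P^{-1}(\tau) Q(\tau)$ controls the positivity of the full block matrix $D_\sigma(\tau)$.

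Concretely, I would perform the block $LDL^\mathsf{T}$ factorization: set
\[
S(\tau)\=\begin{bmatrix} \Id & -P^{-1}(\tau) Q(\tau) \\ 0 & \Id \end{bmatrix},
\]
which is invertible with $\det S(\tau)=1$, and compute directly
\[
\trasp{S}(\tau)\, D_\sigma(\tau)\, S(\tau) = \begin{bmatrix} P(\tau) & 0 \\ 0 & \widetilde R_\sigma(\tau) - \trasp{Q}(\tau) P^{-1}(\tau) Q(\tau) \end{bmatrix}.
\]
This is a congruence transformation and therefore preserves inertia (in particular, it preserves positive definiteness). Since the block-diagonal right-hand side is positive definite if and only if each of its two diagonal blocks is, and $P(\tau)$ is already known to be positive definite, we conclude that $D_\sigma(\tau)$ is positive definite if and only if $\widetilde R_\sigma(\tau) - \trasp{Q}(\tau) P^{-1}(\tau) Q(\tau)$ is positive definite.

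There is no real obstacle here: the only non-trivial step is verifying that $P(\tau)$ is invertible, which is immediate from Lemma \ref{lem:spectra}, so that $P^{-1}(\tau) Q(\tau)$ is well-defined and $S(\tau)$ is a genuine congruence. The computation of $\trasp{S} D_\sigma S$ is a direct expansion using the block structure.
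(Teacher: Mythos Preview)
Your proposal is correct and follows essentially the same approach as the paper: the paper invokes the standard Schur complement criterion (citing \cite[pag.~650--651]{BV04}) for positive definiteness of a symmetric block matrix, and then uses Lemma~\ref{lem:spectra} to ensure that $P(\tau)$ is positive definite. Your version is simply more explicit, carrying out the block congruence $\trasp{S}D_\sigma S$ by hand rather than citing the result.
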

\begin{proof}
In fact, by a standard result in convex analysis $D_\sigma(\tau)$ is positive definite if and only if both 
$P(\tau)$ as well as  the Schur complement of $P(\tau)$ in $D_\sigma(\tau)$ which is represented by the matrix 
 $\widetilde R_\sigma(\tau) - \trasp{Q}(\tau) P^{-1}(\tau) Q(\tau) $ is positive definite. 
(For further details, we refer the interested reader to \cite[pag.650-651]{BV04} and 
references therein). Now the thesis readily follows by invoking Lemma \ref{lem:spectra}. 
This conclude the proof. 
\end{proof}
\begin{rem}\label{rem:nuova}
As a direct consequence of Lemma \ref{lem:nuovo} and Lemma \ref{lem:spectra}, there exists 
$\sigma_0>0$ sufficiently large such that 
\begin{equation}\label{eq:sigmazero}
D_{\sigma_0}(\tau)>0 \qquad \forall\, \tau \in [0,+\infty)
\end{equation}
and by this we infer that $ \qAS_{\sigma_0}$ is non-degenerate. 
\end{rem}
\begin{lem}\label{lem:utile}
Let $y$ be a $s_0$-a.s. If the \BS-condition holds then  we obtain 
\begin{equation}\label{eq:relazione-morse-flia}
\ispec(y)= \spfl(\qAS_\sigma; \sigma \in [0,\sigma_0]),
\end{equation}
where $\spfl$ denotes the spectral flow (cf. Definition \ref{def:sfquadratic}). 
\end{lem}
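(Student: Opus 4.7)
The plan is to exploit the strict monotonicity of the one-parameter family $\sigma \mapsto \qAS_\sigma$ in order to reduce the computation of its spectral flow to the difference of the Morse indices at the two endpoints.

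First I would verify that the path is admissible for the definition of the spectral flow. Lemma \ref{thm:famiglia-sono fredholm} guarantees that each $\qAS_\sigma$ is a bounded, essentially positive, Fredholm quadratic form on $W^{1,2}_0([0,+\infty);\X)$, while the identity
\begin{equation*}
\qAS_{\sigma_2}(u)-\qAS_{\sigma_1}(u)=(\sigma_2-\sigma_1)\,\norm{u}^2_{W^{1,2}_0}
\end{equation*}
shows that $\sigma \mapsto \qAS_\sigma$ is affine, and in particular continuous in the operator-norm topology. Therefore $\spfl(\qAS_\sigma;\sigma\in[0,\sigma_0])$ is well defined.

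Next I would pin down the two endpoints. At $\sigma=0$ the form is $\qAS_0=\qAS$, which is Fredholm by Theorem \ref{thm:key1fredholm}, and whose negative spectral space has finite dimension $\ispec(y)$ by Theorem \ref{prop:key1-una freccia}. At $\sigma=\sigma_0$, Remark \ref{rem:nuova} provides $D_{\sigma_0}(\tau)>0$ uniformly in $\tau$, so that $\qAS_{\sigma_0}$ is coercive; in particular it is non-degenerate and $\dim E_-(\qAS_{\sigma_0})=0$.

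The heart of the argument is the strict monotonicity $\qAS_{\sigma_2}-\qAS_{\sigma_1}>0$ whenever $\sigma_1<\sigma_2$. By the minimax characterization of the eigenvalues of any compatible self-adjoint realization, each eigenvalue is a non-decreasing function of $\sigma$; therefore every crossing of $0$ occurs strictly from negative to positive and contributes $+1$ to the spectral flow. Summing these contributions gives
\begin{equation*}
\spfl(\qAS_\sigma;\sigma\in[0,\sigma_0]) = \dim E_-(\qAS_0) - \dim E_-(\qAS_{\sigma_0}) = \ispec(y),
\end{equation*}
which is the claim. The main delicate point is the possible presence of a kernel of $\qAS$ at $\sigma=0$: since $\qAS_{\sigma_0}$ is non-degenerate and the path is strictly monotone, this is handled by the standard sign convention for the spectral flow at a degenerate initial endpoint, consistent with Definition \ref{def:spectral-index} of $\ispec(y)$ as the dimension of the negative spectral space.
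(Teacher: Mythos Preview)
Your argument is correct and is essentially the approach the paper takes: the paper's proof simply invokes \cite[Formula 2.7]{ZL99} together with Remark \ref{rem:nuova} to obtain $\spfl(\qAS_\sigma;\sigma\in[0,\sigma_0])=\dim E_-(\qAS_0)$, whereas you have unpacked that citation by exploiting the affine structure $\qAS_\sigma=\qAS+\sigma\norm{\cdot}^2_{W^{1,2}_0}$ and the resulting positivity of all crossing forms. Your handling of a possible degenerate left endpoint via the convention in Definition \ref{def:new-spectralflow-def} (the $-\iMor(\Gamma(L,a))$ term vanishes since the crossing form is the identity) is exactly the point one must check, and it matches the paper's implicit use of the same convention.
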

\begin{proof}
By\cite[Formula 2.7]{ZL99} and Remark \ref{rem:nuova}, we get that 
\[
 \spfl(\mathcal Q_\sigma; \sigma \in [0, \sigma_0]) = \dim E_-(\mathcal Q_0) = \dim E_-(\mathcal Q)
\]
and by taking into account  Definition \eqref{def:spectral-index}, the thesis readily follows.  
\end{proof}
By setting $z(\tau)=\big(P(\tau)u'(\tau)+ Q(\tau)u(\tau), u(\tau)\big)$ we obtain that $u $ is a 
solution of Equation \eqref{eq:fliasturm} if and only if $z$  solves the following linear Hamiltonian system
\begin{equation}\label{eq:family-ham-sys}
z'(\tau)= J  B_\sigma(\tau)\, z(\tau) , \qquad \sigma \in[0,\sigma_0] 
\end{equation}
for
\[
B_\sigma(\tau) \= 
 \begin{bmatrix}
  P^{-1}(\tau) &
  -P^{-1}(\tau)Q(\tau)\\
  -\trasp{Q}(\tau)P^{-1}(\tau) & 
\trasp{Q}(\tau)P^{-1}(\tau)Q(\tau)-\widetilde R_\sigma(\tau) 
 \end{bmatrix}.
\]
For any $(\tau_0, \sigma) \in [0,+\infty)\times [0,\sigma_0]$, let $\psi_{(\tau_0, \sigma)}: [0,+\infty) \to \Sp(2N)$ 
be the matrix-valued solution of the system given in 
Equation \eqref{eq:family-ham-sys} such that $\psi_{(\tau_0,\sigma)}(\tau_0)=\Id$. As before, we then define the 
{\em stable space\/} of the Hamiltonian system \eqref{eq:family-ham-sys} as follows
\begin{equation}\label{eq:stabile-sflia}
 E_\sigma^s(\tau_0)= \Set{v \in \R^{2N}| \lim_{\tau\to+\infty}\psi_{(\tau_0,\sigma)}(\tau)\, v=0}.
\end{equation}
Repeat verbatim  the arguments given in the proof of Lemma \ref{thm:stabili-Lagrangiani}, the following result holds (cf. Appendix \ref{sec:spectral-flow-and-Maslov}  for further details). 
\begin{lem}\label{thm:stabili-Lagrangiani-flia}
We assume that the \BS-condition holds. For every $(\tau_0, \sigma)\in [0,+\infty)\times [0,\sigma_0]$, 
the subspace $E^s_\sigma(\tau_0)$ defined in Equation \eqref{eq:stabile-sflia} belongs to the Lagrangian Grassmannian manifold $\Lagr(2N)$. Moreover, for every $\tau_0 \in [0,+\infty)$, the path $\sigma \mapsto E_\sigma^s(\tau_0)$ is analytic.
\end{lem}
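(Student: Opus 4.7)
The plan is to follow verbatim the three-step argument used in the proof of Lemma \ref{thm:stabili-Lagrangiani}, and then add a fourth step promoting smoothness to analyticity in the parameter $\sigma$.

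First, I would verify that the limiting (autonomous) Hamiltonian matrix $H^*_\sigma \= J B^*_\sigma$ obtained by letting $\tau \to +\infty$ in $B_\sigma(\tau)$ is hyperbolic for every $\sigma \in [0,\sigma_0]$. Explicitly, $B^*_\sigma$ differs from $B^*$ of \eqref{eq:H-coll-1-limit-nuovo} only in the lower-right block, which becomes $\trasp{Q}_0 P_0^{-1} Q_0 - \widetilde{R_0} - \sigma \Id$. Reproducing the Schur-complement reduction of Lemma \ref{thm:tecnico2}, the squared matrix $(J\widehat{T^*_\sigma})^2$ carries $\frac{(2-\alpha)^2}{8}\pot(s_0)\Id + D^2\widetilde U(s_0) + \sigma\Id$ on its diagonal blocks. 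Under the \BS-condition this matrix is positive definite at $\sigma=0$, hence \emph{a fortiori} positive definite for every $\sigma \geq 0$. By Proposition \ref{prop:bs-iperb} one therefore concludes that $H^*_\sigma$ is hyperbolic throughout the compact interval $[0,\sigma_0]$.

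Second, for each fixed $\sigma \in [0,\sigma_0]$, I would copy the argument of Lemma \ref{thm:stabili-Lagrangiani}: the isotropy of $E^s_\sigma(\tau_0)$ and of its asymptotic limit $E^s_{\sigma,*}$ rests only on the invariance of $\omega$ under the Hamiltonian flow, which in turn uses only the symmetry of $B_\sigma(\tau)$; hyperbolicity of $H^*_\sigma$ (from step one) forces $\dim E^s_{\sigma,*} = N$, so that $E^s_{\sigma,*} \in \Lagr(N)$. Then \cite[Proposition 1.2]{AM03} yields the gap-topology convergence $E^s_\sigma(\tau_0) \to E^s_{\sigma,*}$ as $\tau_0 \to +\infty$; since the dimension is a locally constant function on the Grassmannian, the isotropic subspace $E^s_\sigma(\tau_0)$ has dimension $N$ for every $\tau_0 \in [0,+\infty)$, hence is Lagrangian.

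Third, to obtain analytic dependence on $\sigma$ (for a fixed $\tau_0 \in [0,+\infty)$), I would invoke again \cite[Theorem 3.1]{AM03}, now applied to the family $\sigma \mapsto B_\sigma(\tau)$, which is affine, hence analytic, in $\sigma$. The hypothesis of that result is precisely the uniform hyperbolicity of the asymptotic matrix $H^*_\sigma$ across the parameter range, which was established in step one. This upgrades smoothness to analyticity in $\sigma$. The main (mild) obstacle is checking that the hypotheses of the cited results from \cite{AM03} hold uniformly in $\sigma$; uniformity on the compact parameter interval $[0,\sigma_0]$ is automatic from step one together with the continuous dependence of $B^*_\sigma$ on $\sigma$.
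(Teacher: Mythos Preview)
Your proposal is correct and follows exactly the route the paper takes: the paper's own proof consists of the single sentence ``Repeat verbatim the arguments given in the proof of Lemma \ref{thm:stabili-Lagrangiani}'', and what you have written is precisely that verbatim repetition, together with the observation that adding $\sigma\Id$ to $\widetilde{R_0}$ only improves positivity (so hyperbolicity persists for all $\sigma\in[0,\sigma_0]$) and that the affine, hence analytic, dependence of $B_\sigma$ on $\sigma$ yields analyticity of the stable subspace via \cite[Theorem 3.1]{AM03}. If anything, you have supplied more detail than the paper does.
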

For every $\tau_0 \in [0,+\infty)$, we can then associate to the family of Hamiltonian systems given in Equation \eqref{eq:family-ham-sys}, the (analytic) path in $\Lagr(2N)$, $\sigma \mapsto  E^s_\sigma(\tau_0)$, 
as well as the Maslov index 
\begin{equation}\label{eq:Maslov-sigma}
 \iCLM(E^s_\sigma(\tau_0), L_0;\sigma \in[0,\sigma_0]).
\end{equation}
We consider on the Sobolev space 
\[
W^{1,2}_{L_0}([0,+\infty), \R^{2N})\=\Set{u\in W^{1,2}([0,+\infty), \R^{2N})| u(0) \in L_0}
\]
the path $\sigma \mapsto \mathcal A_\sigma$  
of  closed operators, selfadjoint in $L^2([0,+\infty),\R^{2N})$ defined as follows 
\[
 \mathcal A_\sigma\=- J \dfrac{d}{d\tau}- B_\sigma (\tau): W^{1,2}_{L_0}([0,+\infty), 
\R^{2N})\subset  L^2([0,+\infty), \R^{2N})\to L^2([0,+\infty), \R^{2N}).
\]
\begin{rem}\label{rmk:utile}
 It is worth noticing that 
 \begin{equation}
  -\spfl\big(\mathcal A_\sigma; \sigma \in [0,\sigma_0]\big)=  
  \irel\left( -J \dfrac{d}{dt}-  B(t), - J \dfrac{d}{dt}- B_{\sigma_0}(t) \right)
 \end{equation}
as a direct consequence of \cite[Definition  2.8]{ZL99} (cf. Definition \ref{def:relativeMorseindex}). 
\end{rem}
The next technical result, is crucial in order to relate the geometric index to the Maslov introduced in Equation \eqref{eq:Maslov-sigma}.
\begin{lem}\label{thm:rinuovo}
Let us consider the operator $\mathcal  T_\sigma$ introduced in Eq. \eqref{eq:fliasturm}.
Then $u \in \ker \mathcal T_\sigma$  
if and only if  
\[
z(\tau)=\big(P(\tau) u'(\tau) + Q(\tau)u(\tau), u(\tau)\big)
\]
belongs to 
$\ker \mathcal A_\sigma$. Furthermore the Hamiltonian boundary value problem 
 \begin{equation}
  \begin{cases} 
   z'(\tau) = JB_\sigma(\tau) z(\tau), \qquad \tau \in [0,+\infty)\\
   z(0) \in L_0, \qquad  \lim_{\tau \to +\infty} z(\tau)=0
  \end{cases}
 \end{equation}
admits only the trivial solution if and only if $\mathcal A_\sigma$ is nondegenerate. 
\end{lem}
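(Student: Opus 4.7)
The plan is to split the lemma into two claims and treat each by essentially elementary means: the first is the standard Legendre-transform bijection between the kernels of a Sturm-Liouville operator and its Hamiltonian reformulation, while the second is an interpretation of the non-degeneracy of $\mathcal A_\sigma$ in terms of the BVP, relying on asymptotic hyperbolicity at infinity to equate pointwise decay with $W^{1,2}$-membership.

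\emph{First equivalence.} Given $u \in \mathscr D(\mathcal T_\sigma) = W^{2,2}\cap W^{1,2}_0$, set $p(\tau) := P(\tau) u'(\tau) + Q(\tau) u(\tau)$ and $z := (p,u)$. A direct block computation using the explicit form of $B_\sigma$ in Equation \eqref{eq:family-ham-sys} shows that $z' = JB_\sigma z$ splits as
\[
u' = P^{-1}(p - Qu) \qquad \text{and} \qquad p' = \trasp Q\, u' + \widetilde R_\sigma u.
\]
The first relation is merely the definition of $p$, while substituting $p = Pu' + Qu$ into the second yields exactly $\mathcal T_\sigma u = 0$. The boundary condition $u(0)=0$ translates into $z(0) = (p(0),0) \in L_0 = \R^N \times \{0\}$, and $u \in W^{2,2}$ together with the boundedness of $P, Q, P^{-1}$ gives $z \in W^{1,2}$. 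The converse direction is symmetric: from $z = (p,u) \in \ker \mathcal A_\sigma$ the second component of $z' = JB_\sigma z$ forces $p = Pu' + Qu$ (so $u \in W^{2,2}$), the first component yields $\mathcal T_\sigma u = 0$, and $z(0) \in L_0$ forces $u(0)=0$.

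\emph{Second equivalence.} By definition, $\mathcal A_\sigma$ is non-degenerate iff $\ker \mathcal A_\sigma = \{0\}$. Since $W^{1,2}([0,+\infty),\R^{2N}) \hookrightarrow \mathscr C_0([0,+\infty),\R^{2N})$, every $z \in \ker \mathcal A_\sigma$ satisfies $z' = JB_\sigma z$, $z(0) \in L_0$, and $\lim_{\tau\to+\infty}z(\tau)=0$, hence it solves the BVP. The nontrivial direction is the converse: one must show that every BVP solution automatically lies in $W^{1,2}$. This is the only place where the BS-condition is genuinely used. By Proposition \ref{prop:bs-iperb}, $JB^*$ is hyperbolic; since $\widetilde R_\sigma^* = \widetilde R_0^* + \sigma \Id$ remains positive definite for every $\sigma \geq 0$ under BS, the same proposition applied to the shifted matrix shows that the limiting matrix $JB^*_\sigma$ is hyperbolic as well. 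Combined with the convergence $B_\sigma(\tau) \to B^*_\sigma$ supplied by Lemma \ref{lem:conv_matrici}, the roughness of exponential dichotomies forces any solution of $z' = JB_\sigma(\tau) z$ with $z(\tau)\to 0$ to decay exponentially, whence $z \in L^2$; boundedness of $B_\sigma$ then gives $z' \in L^2$, so $z \in W^{1,2}_{L_0}$ and $z \in \ker \mathcal A_\sigma$.

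The main obstacle worth flagging is precisely this passage from pointwise decay at infinity to $W^{1,2}$-membership; everything else is bookkeeping built on the Legendre transform. Because the hyperbolicity of the limiting matrix follows immediately from BS via Proposition \ref{prop:bs-iperb} and the standard roughness of exponential dichotomies supplies the exponential decay, this obstacle dissolves in one step, closing both equivalences.
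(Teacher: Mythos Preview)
Your proposal is correct and follows essentially the same route as the paper, only much more explicitly. The paper dispatches the first equivalence in one line (``a straightforward calculation''), which is exactly the Legendre-transform bookkeeping you spell out. For the second equivalence the paper does not argue directly via exponential dichotomies; instead it observes that the evaluation map $\ker\mathcal A_\sigma \ni z \mapsto z(0) \in L_0 \cap E^s_\sigma(0)$ is an isomorphism (the paper writes $u\mapsto u(0)$, which is a minor slip), so that $\ker\mathcal A_\sigma=\{0\}$ iff $L_0\cap E^s_\sigma(0)=\{0\}$ iff the BVP has only the trivial solution. This packaging hides precisely the point you flag---that pointwise decay of a solution forces $W^{1,2}$-membership---inside the earlier machinery (Lemma~\ref{thm:stabili-Lagrangiani-flia} and the references to \cite{AM03}) guaranteeing that $E^s_\sigma(0)$ is a well-defined Lagrangian whose elements generate $L^2$-solutions. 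Your version makes the hyperbolicity/dichotomy step visible rather than deferring it, which is arguably cleaner for a reader who has not internalised those earlier lemmas; the paper's version is shorter because it has already paid that cost.
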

\begin{proof}
The first statement follows immediately by a straightforward calculation. 
In order to prove the second statement, it is enough  to observe that for each $\sigma \in [0,\sigma_0]$ the 
evaluation map
\begin{equation}\label{eq:iso-ev}
 \ker \mathcal T_\sigma \ni u \longmapsto u(0) \in  L_0 \cap E_\sigma^s(0)
\end{equation}
is an isomorphism.
\end{proof}
We denote by $E^s_{*, \sigma}$ the stable space of the one-parameter family of Hamiltonian systems pointwise 
defined by 
\begin{equation}\label{eq:flia-ham-bnd}
 z'(\tau)= J B_\sigma^* z(\tau), \tau \in [0,+\infty)
\end{equation}
where 
\[
 B_\sigma^*\=  
 \begin{bmatrix}
  P_0^{-1} &
  -P_0^{-1}Q_0\\
  -\trasp{Q}_0P^{-1}_0  & 
\trasp{Q}_0P^{-1}_0Q_0-\widetilde R_\sigma^* 
 \end{bmatrix}.
\]
for $\widetilde R_\sigma^* \= \widetilde R_0^* + \sigma \Id$. 
\begin{lem}\label{thm:BND2}
  If the \BS-condition holds, then the $s_0$-limit family of Hamiltonian system given in 
 Equation \eqref{eq:flia-ham-bnd} is BND. 
\end{lem}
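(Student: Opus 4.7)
The plan is to imitate verbatim the strategy of Lemma \ref{thm:BND1}, observing that the perturbation $\sigma\,\Id$ (with $\sigma\in[0,\sigma_0]$) added to $\widetilde R_0^{*}$ leaves intact every sign condition that was used there. First, since the \BS-condition ensures $\widetilde R_0^{*}>0$, and since $\sigma\geq 0$, we immediately get $\widetilde R_\sigma^{*}=\widetilde R_0^{*}+\sigma\,\Id>0$ for every $\sigma\in[0,\sigma_0]$. Hence the spectral analysis of $\widetilde R_0^{*}$ in terms of the eigenvalues $\mu_1,\dots,\mu_N$ of $D^2U|_{\mathcal E}(s_0)$ carries over to the family, producing for every $\sigma$ a splitting of $\widetilde R_\sigma^{*}$ which is block-diagonal with respect to the same orthogonal decomposition $\R s_0\oplus s_0^{\perp_M}$.

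Next, I would reproduce the $\diamond$-product decomposition of the proof of Lemma \ref{thm:BND1}: the Hamiltonian matrix $H_\sigma^{*}=JB_\sigma^{*}$ splits (up to symplectic similarity) as
\begin{equation*}
H_\sigma^{*}\;\sim\;\widehat{K}_{0,\sigma}^{*}\diamond \widehat{K}_{1,\sigma}^{*}\diamond \cdots\diamond \widehat{K}_{N,\sigma}^{*},
\end{equation*}
where each $2\times 2$ block, by Lemma \ref{thm:tecnico} applied to the $\sigma$-perturbed data, has the form
\begin{equation*}
\widehat K_{j,\sigma}^{*}=\begin{bmatrix} 0 & r_j(\sigma)\\ p_j & 0\end{bmatrix},\qquad p_j>0,
\end{equation*}
with $r_0(\sigma)=\tfrac{(2-\alpha)^2}{8}U(s_0)+c_\alpha\overline\delta_\alpha^2+\sigma$ and, for $j=1,\ldots,N$, $r_j(\sigma)=\tfrac{(2-\alpha)^2}{8}U(s_0)+\mu_j+\sigma$. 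Under the \BS-condition and $\sigma\geq 0$, each $r_j(\sigma)$ is strictly positive.

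The key elementary fact is then the computation already carried out in Lemma \ref{thm:BND1}: for a $2\times 2$ block of the form above with $p,r>0$, the stable eigenspace is spanned by $\bigl(1,-\sqrt{p/r}\bigr)^{\!\mathsf T}$, which is transversal to the horizontal line $\mathrm{span}(1,0)^{\mathsf T}$. Since the intertwining matrix $P=\begin{bmatrix}\Id&-A\\ 0&\Id\end{bmatrix}$ appearing in Lemma \ref{thm:tecnico} preserves the Dirichlet Lagrangian $L_0$ (the lower block of any vector in $L_0$ is zero), the transversality $\widehat{E^{s}_{*,\sigma}}\cap L_0=\{0\}$ is equivalent to $E^{s}_{*,\sigma}\cap L_0=\{0\}$. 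Assembling the $N+1$ blocks through the $\diamond$-product yields $E^{s}_{*,\sigma}\cap L_0=\{0\}$ for every $\sigma\in[0,\sigma_0]$, which is exactly the BND-condition for the limit family \eqref{eq:flia-ham-bnd}.

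The only point requiring a little care, and which I regard as the main (but still mild) obstacle, is the bookkeeping for the symplectic similarity: one must check that the similarity matrix $P$ used to bring $B_\sigma^{*}$ into the diagonal $\diamond$-product form is independent of $\sigma$ (or at least that it maps $L_0$ to $L_0$ for every $\sigma$), so that the transversality passes back and forth between the original and the reduced system uniformly in $\sigma$. This follows at once from the explicit formula in Lemma \ref{thm:tecnico2}, where $P$ depends only on the constant $A=-c_\alpha\overline\delta_\alpha\,\Id$, which is unaffected by the perturbation.
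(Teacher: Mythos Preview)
Your proposal is correct and takes exactly the approach indicated by the paper, whose own proof consists of the single sentence ``The proof of this result readily follows by arguing as in the proof of Lemma~\ref{thm:BND1}.'' You have simply written out in full what the paper leaves implicit: the $\sigma$-perturbation only shifts each $r_j$ by $+\sigma\geq 0$, preserving the strict positivity guaranteed by the \BS-condition, and the symplectic intertwiner $P$ from Lemma~\ref{thm:tecnico} is both $\sigma$-independent and $L_0$-preserving, so the blockwise transversality argument of Lemma~\ref{thm:BND1} goes through unchanged.
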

\begin{proof}
 The proof of this result readily follows by arguing as in the proof of Lemma \ref{thm:BND1}. 
\end{proof}
The next result (which is crucial in the proof of Theorem \ref{thm:indextheorem}), relates the geometrical index of a $s_0$-a.s. defined as (Maslov) intersection index between a curve of Lagrangian subspaces parametrized by the $\tau$ variable with the Dirichlet Lagrangian and  the  intersection index of another completely different  curve of Lagrangian subspaces parametrized by the new parameter $\sigma$. The key point in order to establish such an equality is the based on the fact that the rectangle is homotopically  trivial (being contractible).
\begin{prop}\label{thm:key4}
Under the previous notation and assumptions of Lemma \ref{thm:stabili-Lagrangiani-flia}, we obtain  
\[
-\igeo(y) = \iCLM( E^s_\sigma(0),L_0;\sigma \in[0,\sigma_0]).
\]
\end{prop}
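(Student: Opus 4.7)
The strategy is to view $(\tau,\sigma)\mapsto E^s_\sigma(\tau)$ as a two-parameter family of Lagrangian subspaces of $(\R^{2N},\omega)$, and to apply the homotopy invariance of the Maslov index on a rectangle of the form $[0,\tau_1]\times[0,\sigma_0]$, eventually letting $\tau_1\to+\infty$. Along this rectangle, the four sides produce Maslov indices whose signed sum must vanish, and three of the four can be made to vanish individually.

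More precisely, the plan is as follows. For every $\sigma\in[0,\sigma_0]$ the family of Hamiltonian systems in Equation \eqref{eq:family-ham-sys} satisfies the BND-condition: this is Lemma \ref{thm:BND2} together with the fact that adding $\sigma\,\Id$ to $\widetilde R$ only reinforces the \BS-condition. Consequently the limiting Lagrangian path $\sigma\mapsto E^s_{*,\sigma}$ is nowhere intersecting $L_0$, so the Maslov contribution of the ``right'' edge (at $\tau=+\infty$) is zero. Using the convergence $E^s_\sigma(\tau)\to E^s_{*,\sigma}$ in the gap topology as $\tau\to+\infty$ (as in Lemma \ref{thm:stabili-Lagrangiani}), which can be made uniform in $\sigma\in[0,\sigma_0]$ by compactness of the parameter interval and the $C^0$-continuous dependence of stable subspaces on parameters, the rectangle $[0,\tau_1]\times[0,\sigma_0]$ at large $\tau_1$ can be closed by a short path at its right side that contributes nothing to the Maslov index.

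For the ``top'' edge at $\sigma=\sigma_0$, Remark \ref{rem:nuova} ensures that the matrix $D_{\sigma_0}(\tau)$ is pointwise positive definite, so the quadratic form $\qAS_{\sigma_0}$ is positive, hence nondegenerate, on $W^{1,2}_0([\tau_0,+\infty);\X)$ for every $\tau_0\geq 0$. Via the isomorphism \eqref{eq:iso-ev} applied on the shifted interval $[\tau_0,+\infty)$, this is equivalent to $E^s_{\sigma_0}(\tau_0)\cap L_0=\{0\}$ for every $\tau_0\in[0,+\infty)$. Therefore the Maslov index along the top edge $\tau\mapsto E^s_{\sigma_0}(\tau)$ against $L_0$ vanishes as well.

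Combining these facts, by homotopy invariance of the Maslov index on the contractible square, the sum of the signed contributions along the four sides is zero, and only the ``bottom'' and ``left'' edges survive. The bottom edge, $\tau\mapsto E^s_0(\tau)=E^s(\tau)$, equals $-\igeo(y)$ by Definition \ref{def:geo-index}, while the left edge, $\sigma\mapsto E^s_\sigma(0)$, equals $\iCLM(E^s_\sigma(0),L_0;\sigma\in[0,\sigma_0])$; accounting for the orientation of the boundary yields
\[
-\igeo(y) \;=\; \iCLM\bigl(E^s_\sigma(0),L_0;\sigma\in[0,\sigma_0]\bigr),
\]
which is the desired identity. The main obstacle is the careful justification of the limit $\tau_1\to+\infty$: one needs the convergence $E^s_\sigma(\tau)\to E^s_{*,\sigma}$ to be uniform in $\sigma$, so that the ``closing segment'' at the right side truly contributes no crossings with $L_0$. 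This uniformity, in turn, rests on the uniform hyperbolicity of $H^*_\sigma$ along $\sigma\in[0,\sigma_0]$ (guaranteed by the \BS-condition and the monotonicity in $\sigma$) together with the uniform convergence $B_\sigma(\tau)\to B^*_\sigma$ from Lemma \ref{lem:conv_matrici}.
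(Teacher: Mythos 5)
Your proof is correct and follows essentially the same route as the paper: view $(\tau_0,\sigma)\mapsto E^s_\sigma(\tau_0)$ as a two-parameter family on a (compactified) rectangle, invoke homotopy invariance and path additivity of the Maslov index to make the boundary contribution vanish, and show the edge at $\tau=+\infty$ contributes nothing by the BND-condition (Lemma \ref{thm:BND2}) while the edge at $\sigma=\sigma_0$ contributes nothing because $D_{\sigma_0}(\tau)$ is pointwise positive definite (Remark \ref{rem:nuova}). Your explicit use of the evaluation isomorphism \eqref{eq:iso-ev} on shifted intervals for the $\sigma=\sigma_0$ edge, and your attention to the uniformity in $\sigma$ of the gap-topology convergence $E^s_\sigma(\tau)\to E^s_{*,\sigma}$ needed to close the rectangle at infinity, spell out steps the paper passes over more quickly, but the underlying argument is the same.
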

\begin{proof}
We start  to observe that as a direct consequence of the asymptotic estimates obtained in 
Section \ref{sec:variational_setting} and by invoking \cite[Proposition 1.2]{AM03} we obtain 
that 
\[
 \lim_{\tau_0\to+\infty} E^s_\sigma(\tau_0)=E^s_{*,\sigma}
\]
in the metric topology of the Lagrangian Grassmannian. 
We now consider (infinite) rectangle $\mathcal R$ 
obtained by compactifying the strip $[0,+\infty) \times [0,\sigma_0]$. Being $\mathcal R$ homotopically trivial 
(being contractible), in particular we obtain that the Maslov index with respect to $L_0$ of the part obtained 
by restricting the two-parameter family $(\tau_0, \sigma)\mapsto E^s_\sigma(\tau_0)$ to the boundary 
$\partial \mathcal R$ of the rectangle $\mathcal R$ is identically zero.
By the invariance of the $\iCLM$ index  for fixed ends homotopy and the 
additivity for concatenation of paths, we obtain that
\begin{figure}[ht]
  \centering
\includegraphics[scale=0.25]{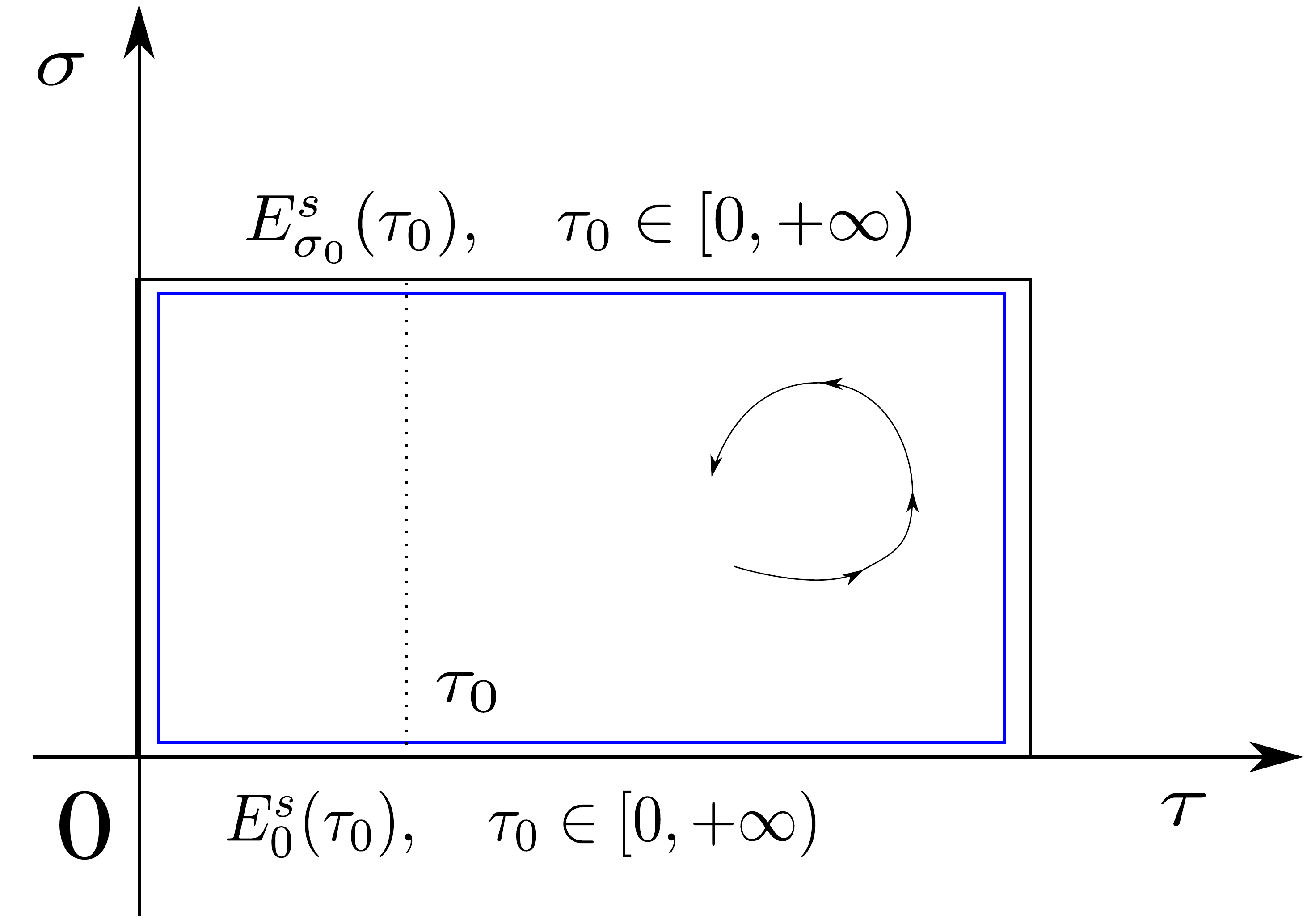}
  \caption{Pictorial view of the stable (blue rectangle)  
space.}\label{fig:stable}
\end{figure} 
\begin{multline}\label{eq:mancava}
\iCLM\big( E^s_\sigma(0),L_0;\sigma \in [0,\sigma_0])=
\iCLM\big(E^s_0(\tau_0), L_0 ;\tau_0 \in [0,+\infty)\big)\\
+ \mu\big(E^s_{*,\sigma}, L_0; \sigma \in [0,\sigma_0]\big) - \iCLM\big(E^s_{\sigma_0}(\tau_0),L_0;\tau_0 \in [0,+\infty)\big);
\end{multline}
we observe that $\iCLM\big(E^s_{*,\sigma}, L_0 ;\sigma \in [0,\sigma_0]\big)$ 
vanishes since, by  taking into account Lemma \ref{thm:BND2}, under the the \BS-condition  $E^s_{*,\sigma}$ is always transverse to $ L_0$. Furthermore the term $\iCLM\big(E^s_{\sigma_0}(\tau_0),L_0;\tau_0 \in [0,+\infty)\big)$ vanishes by Lemma \ref{lem:nuovo}, Remark \ref{rem:nuova} and Lemma \ref{lem:utile}. We conclude the proof using Definition \ref{def:geo-index}.
\end{proof}
\begin{prop}\label{thm:L=T}
We assume that the \BS-condition holds. Thus we have  
\begin{equation}\label{eq:infondonelcammindinosdue}
\spfl(\mathcal T_\sigma; \sigma \in [0,\sigma_0]) 
=\spfl(\qAS_\sigma; \sigma \in  [0,\sigma_0]).
\end{equation}
\end{prop}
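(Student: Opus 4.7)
The plan is to compute both spectral flows by a regular crossing form formula and to observe that the two sets of crossing data coincide on-the-nose.

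First I would note that both paths are affine in $\sigma$: in $\qAS_\sigma$ the parameter enters through the added quadratic $\sigma \|\cdot\|^2_{W^{1,2}_0}$, while in $\mathcal T_\sigma = \mathcal T + \sigma I$ it enters through a multiple of the identity. By Lemma~\ref{thm:famiglia-sono fredholm}, each $\qAS_\sigma$ is essentially positive and Fredholm on $W^{1,2}_0$, and correspondingly each $\mathcal T_\sigma$ is an essentially positive selfadjoint Fredholm operator on $L^2$. Both affine families are real analytic in $\sigma$, so each spectral flow is computable via the standard crossing-form formula (cf.\ Appendix~\ref{sec:spectral-flow-and-Maslov} and \cite{ZL99}).

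Next I would identify the crossings of the two families. The integration-by-parts identity \eqref{eq:2--1} yields $\qAS_\sigma(u,v) = \langle \mathcal T_\sigma u, v\rangle_{L^2(0,+\infty)}$ for every $u \in W^{1,2}_0 \cap W^{2,2}$ and $v \in W^{1,2}_0$. By elliptic regularity (cf.\ Lemma~\ref{le:weak_sol}), the kernel of the $W^{1,2}_0$--selfadjoint representative of $\qAS_\sigma$ coincides with $\ker \mathcal T_\sigma$, so both families become degenerate at exactly the same parameters $\sigma_* \in [0,\sigma_0]$, with identical kernels. The two crossing forms also match in signature: for $\qAS_\sigma$ it is $u \mapsto \|u\|^2_{W^{1,2}_0}$ on $\ker \mathcal T_{\sigma_*}$, while for $\mathcal T_\sigma$ it is $u \mapsto \|u\|^2_{L^2}$. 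Both are positive definite on every subspace, so both have signature $\dim \ker \mathcal T_{\sigma_*}$. Summing the signed contributions yields \eqref{eq:infondonelcammindinosdue}; by Remark~\ref{rem:nuova} the right endpoint $\sigma = \sigma_0$ is non-degenerate on both sides, so no boundary correction is needed.

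The main obstacle will be making rigorous the fact that the two notions of spectral flow---one for a path of \emph{unbounded} selfadjoint Fredholm operators on $L^2$, one for a path of \emph{bounded} Fredholm quadratic forms on $W^{1,2}_0$---can be computed by a common crossing-form recipe under the cogredient transform $\mathcal T_\sigma \leftrightarrow \mathcal C \mathcal T_\sigma$ implemented by the positive bounded map $\mathcal C = (I+T^*T)^{-1}$ of Theorem~\ref{thm:key1fredholm}. Once the invariance of spectral flow under positive congruence is in place (a positive-definite change of inner product cannot create or destroy sign crossings), the positivity of both crossing forms makes the final identification immediate.
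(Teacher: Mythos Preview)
Your proposal is correct and follows essentially the same approach as the paper: both arguments represent $\qAS_\sigma$ by a bounded selfadjoint operator $\mathcal L_\sigma$ on $W^{1,2}_0$, use the integration-by-parts identity $\langle \mathcal L_\sigma u,v\rangle_{W^{1,2}_0}=\langle \mathcal T_\sigma u,v\rangle_{L^2}$ to match kernels, and then compare crossing forms, which are positive definite on both sides. The only cosmetic difference is that the paper introduces an auxiliary $\delta$-perturbation $\mathcal L_\sigma^\delta=\mathcal L_\sigma+\delta\mathcal B$ (with $\langle\mathcal B u,v\rangle_{W^{1,2}_0}=\langle u,v\rangle_{L^2}$) and invokes \cite{Wat15} to force regular crossings, whereas you appeal directly to the affine dependence on $\sigma$; both routes lead to the same signature count.
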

\begin{proof}
We start by observing that under the the \BS-condition, the bounded symmetric 
bilinear form 
$\IAS_\sigma$ associated to 
$ \qAS_\sigma$ (see Eq. \eqref{eq:quadratic-sigma})  is coercive. 
By invoking the Lax-Milgram Theorem, there 
exists a bounded and selfadjoint operator in $W^{1,2}_{L_0}((0,+\infty), \R^N)$, namely 
$\mathcal L_\sigma: W^{1,2}_{L_0}(\R^+, \R^{N})\to 
W^{1,2}_{L_0}(\R^+, \R^{N})$ 
that represents the form 
$\IAS_\sigma$; thus we have
\[
 \IAS_\sigma[\xi,\eta]= \langle \mathcal L_\sigma 
\xi,\eta\rangle_{W_0^{1,2}((0,+\infty), \R^N)} 
\qquad 
  \forall\, \xi,  \,   \eta \in W^{1,2}_0((0,+\infty), \R^N).
\]
By taking into account Lemma \ref{thm:famiglia-sono fredholm}, the operator 
$\mathcal L_\sigma$ is Fredholm too. Thus, we have 
 By Definition, we have
\begin{equation}\label{eq:50}
 \langle\mathcal L_\sigma \xi, \eta\rangle_{W^{1,2}_{L_0}}=
  \langle \mathcal T_\sigma\, \xi, \eta\rangle_{L^2}, \quad  
  \forall\, \xi \in \big(W^{1,2}_0\cap W^{2,2}\big), \ 
\forall\,\eta \in W^{1,2}_0
\end{equation}
and by this it clearly follows that $\ker \mathcal L_\sigma= \ker 
\mathcal T_\sigma$. 
Let $\mathcal B:W^{1,2}_0(\R^+, \R^N) \to W^{1,2}_0(\R^+, \R^N)$
the (unique) selfadjoint invertible  operator  such that 
$\langle u,v\rangle_{L^2}= \langle \mathcal Bu, v\rangle_{ W^{1,2}_0}$. 
For $\delta'>0$ sufficiently 
small the selfadjoint operator $\mathcal L_\sigma^\delta\= 
\mathcal L_\sigma + \delta \mathcal B$  is
Fredholm for all $0 \leq \delta < \delta'$. Moreover if $\mathcal L$ 
has invertible ends,  then the same is true for $\mathcal L^\delta$. By the homotopy 
invariance of the spectral flow, we have 
\begin{itemize}
\item $
 \spfl(\mathcal L_\sigma; \sigma \in  [0,\sigma_0])= 
 \spfl(\mathcal L_\sigma^\delta; \sigma \in  [0,\sigma_0])$ if the ends of 
 the path $\sigma \mapsto \mathcal L_\sigma$ are non-degenerate;
\item  $
 \spfl(\mathcal L_\sigma; \sigma \in  [0,\sigma_0])-n_-\big(\Gamma(\mathcal L, 0)\big)= 
 \spfl(\mathcal L_\sigma^\delta; \sigma \in  [0,\sigma_0])$ if the initial point  is degenerate,
 \end{itemize}
here $n_-$ denotes the index of the crossing form $\Gamma(\mathcal L, 0)$ (cf. Appendix \ref{sec:spectral-flow-and-Maslov} for further details). 
By  taking into account  Equation \eqref{eq:50} as well as  definition of  $\mathcal 
B$, we obtain that 
\[
 \langle \mathcal L_\sigma^\delta\, \xi, 
\eta\rangle_{W^{1,2}_{L_0}}=
  \langle \mathcal T_\sigma^\delta\, \xi, \eta\rangle_{L^2}, \quad  
  \forall\, \xi \in \big(W^{1,2}_0 \cap W^{2,2}\big), \ 
\forall\, \eta \in W^{1,2}_0
\]
for $ \mathcal T_\sigma^\delta\= \mathcal T_\sigma+\delta \Id_{L^2}$
which shows at once that $\ker \mathcal L_\sigma^\delta=\ker 
\mathcal T_\sigma^\delta$  and  the crossing forms (cf. Appendix 
\ref{sec:spectral-flow-and-Maslov} for the definition) coincide; thus we have 
$\Gamma(\mathcal L^\delta, \sigma)= \Gamma (\mathcal T^\delta, \sigma)$, for any $\sigma 
\in [0,\sigma_0]$. 
Directly from Definition \ref{def:new-spectralflow-def}, we obtain 
\begin{multline}
 \spfl(\mathcal T_\sigma; \sigma \in  [0,\sigma_0])=
\spfl(\mathcal T_\sigma^\delta; \sigma \in  [0,\sigma_0]) = 
 \sum_{\sigma \in (0,\sigma_0]} \sgn \Gamma(\mathcal T_{\sigma}^\delta, 
\sigma)\\ =  
  \sum_{\sigma \in (0,\sigma_0]} \sgn \Gamma(\mathcal L_{\sigma}^\delta, \sigma)= 
  \spfl(\mathcal L_\sigma; \sigma \in  [0,\sigma_0])
 \end{multline}
and by the previous equality on the kernels we deduce that 
\begin{equation}\label{eq:infondo}
\spfl(\mathcal T_\sigma; \sigma \in (0,\sigma_0]) =
\spfl(\mathcal L_\sigma; \sigma \in  (0,\sigma_0]).
\end{equation}
If the initial instant is non-degenerate, we obtain the desired assertion. Otherwise, it is enough to observe that 
\begin{multline}
 \spfl(\mathcal T_\sigma; \sigma \in[0,\sigma_0])=  \spfl(\mathcal T_\sigma^\delta ; 
 \sigma \in[0,\sigma_0]) + n_-\big(\Gamma(\mathcal T, 0)\big)\\= 
 \spfl(\mathcal L_\sigma^\delta ; 
 \sigma \in [0,\sigma_0]) + n_-\big(\Gamma(\mathcal L, 0)\big)= 
  \spfl(\mathcal L_\sigma; \sigma \in[0,\sigma_0]).
\end{multline}
This conclude the proof. 
\end{proof}
The next result is interesting in its own and states that, in the case we are dealing with, 
the spectral flow associated to a 
Lagrangian system actually coincides with the spectral flow of the Hamiltonian system corresponding to its Hamiltonian counterpart. 
\begin{prop}\label{thm:T=A}
Under the \BS-condition, then we have
 \begin{equation}\label{eq:infondonelcammindinostre}
\spfl(\mathcal T_\sigma; \sigma \in [0,\sigma_0]) 
=\spfl(\mathcal A_\sigma; \sigma \in  [0,\sigma_0]).
\end{equation}
\end{prop}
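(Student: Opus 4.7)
The plan is to compute both spectral flows via the crossing-form characterization and to exhibit a canonical identification between the two crossing forms given by Lemma~\ref{thm:rinuovo}. First I would verify that under the \BS-condition the path $\sigma\mapsto \mathcal A_\sigma$ consists of selfadjoint Fredholm operators on $L^2([0,+\infty),\R^{2N})$, so that $\spfl(\mathcal A_\sigma;\sigma\in[0,\sigma_0])$ is well defined: this follows from \cite[Theorem 4.1]{RS05a} applied in the same way as in the proof of Theorem~\ref{thm:key1fredholm}, together with the boundary non-degeneracy at infinity provided by Lemma~\ref{thm:BND2}. Both families being affine in $\sigma$, they depend real-analytically on the parameter, so their crossings are isolated and amenable to a crossing-form analysis.

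Next, I would compute the crossing forms at a crossing $\sigma_*\in(0,\sigma_0)$. Since $\partial_\sigma\mathcal T_\sigma=\Id$,
\[
\Gamma(\mathcal T,\sigma_*)[u]=\langle \partial_\sigma\mathcal T_\sigma\, u,u\rangle_{L^2}=\|u\|_{L^2}^2,\qquad u\in\ker\mathcal T_{\sigma_*}.
\]
For the Hamiltonian family, $\widetilde R_\sigma=\widetilde R+\sigma \Id$ appears with a minus sign in the bottom-right block of $B_\sigma$, hence $\partial_\sigma B_\sigma$ is minus the orthogonal projection $E_{22}$ onto the second $\R^N$-factor, and $\partial_\sigma\mathcal A_\sigma=-\partial_\sigma B_\sigma=E_{22}$. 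Writing $z=(z_1,z_2)$ one obtains
\[
\Gamma(\mathcal A,\sigma_*)[z]=\langle \partial_\sigma\mathcal A_\sigma\, z,z\rangle_{L^2}=\|z_2\|_{L^2}^2,\qquad z\in\ker\mathcal A_{\sigma_*}.
\]
The map $u\mapsto z=(Pu'+Qu,\,u)$ of Lemma~\ref{thm:rinuovo} is an isomorphism $\ker\mathcal T_{\sigma_*}\to\ker\mathcal A_{\sigma_*}$ under which $z_2=u$, so the two crossing forms are transported into one another. In particular both are positive definite, every crossing is regular with positive signature, and $\dim\ker\mathcal T_{\sigma_*}=\dim\ker\mathcal A_{\sigma_*}$ contributes equally to each spectral flow.

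Summing the regular contributions then gives
\[
\spfl(\mathcal T_\sigma;\sigma\in[0,\sigma_0])=\sum_{\sigma_*}\dim\ker\mathcal T_{\sigma_*}=\sum_{\sigma_*}\dim\ker\mathcal A_{\sigma_*}=\spfl(\mathcal A_\sigma;\sigma\in[0,\sigma_0]).
\]
A possible degeneracy at the left endpoint $\sigma=0$ would be handled exactly as in the proof of Proposition~\ref{thm:L=T}: shift the path by a small $\delta>0$ so both endpoints become non-degenerate and observe that the endpoint corrections $n_-(\Gamma(\mathcal T,0))$ and $n_-(\Gamma(\mathcal A,0))$ coincide through the very same crossing-form identification. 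The principal technical obstacle is precisely the preliminary verification that the family $\mathcal A_\sigma$ remains Fredholm on the whole segment $[0,\sigma_0]$ and that its spectral flow is defined in the half-line setting with boundary condition $z(0)\in L_0$; once this is secured, the affine structure of both families together with the matching of crossing forms makes the equality essentially forced.
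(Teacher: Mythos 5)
Your proposal is correct and follows the same overall strategy as the paper's proof: establish a bijection between $\ker\mathcal T_{\sigma_*}$ and $\ker\mathcal A_{\sigma_*}$ via Lemma~\ref{thm:rinuovo}, show that the crossing forms coincide under this bijection, and then read off the equality of spectral flows from the crossing-form formula of Definition~\ref{def:new-spectralflow-def}. The execution, however, is more streamlined than the paper's. You compute directly that $\partial_\sigma\mathcal T_\sigma=\Id$ and that, since $\widetilde R_\sigma=\widetilde R+\sigma\Id$ enters $B_\sigma$ with a minus sign, $\partial_\sigma\mathcal A_\sigma=-\partial_\sigma B_\sigma=E_{22}$; this shows at once that $\Gamma(\mathcal T,\sigma_*)[u]=\|u\|_{L^2}^2$ and $\Gamma(\mathcal A,\sigma_*)[z]=\|z_2\|_{L^2}^2$, which are transported into one another since $z_2=u$ under the Lemma~\ref{thm:rinuovo} map. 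Because these crossing forms are automatically positive definite (a nonzero $z\in\ker\mathcal A_{\sigma_*}$ has $z_2\neq0$, as $z_2=0$ would force $z_1=Pz_2'+Qz_2=0$), all crossings are regular with signature equal to the nullity, and the endpoint correction is automatically a Morse index zero; hence the $\delta$-perturbation via \cite[Theorem 2.6]{Wat15} used in the paper, as well as the detour through the quadratic Lagrangian density $\widetilde L_{\sigma,\delta}$ and the Legendre transform to the Hamiltonian density, are not actually needed. What your shortcut buys is a visibly elementary computation in place of an appeal to genericity; what the paper's route buys is a formalism (matching Lagrangian and Hamiltonian crossing densities) that would carry over to paths whose $\sigma$-dependence is not the simple affine shift $+\sigma\Id$, at the cost of some extra bookkeeping. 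The one small item you could make explicit, which you correctly flag as the main technical prerequisite, is that Lemma~\ref{thm:famiglia-sono fredholm} together with \cite[Theorem 4.1]{RS05a} and Lemma~\ref{thm:BND2} guarantees that $\mathcal A_\sigma$ is a continuous path of selfadjoint Fredholm operators on $W^{1,2}_{L_0}\subset L^2$ for all $\sigma\in[0,\sigma_0]$, so that $\spfl(\mathcal A_\sigma)$ is well defined; with that secured, your argument is complete.
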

\begin{proof}
By \cite[Theorem 2.6]{Wat15}, there exists  $\delta>0$ sufficiently small such that the 
path  $\mathcal T^\delta\= \mathcal T+ \delta \Id$ has only regular crossings. 
Analogously, we denote by  $\mathcal A^\delta$ the corresponding induced path. 
By arguing as in  \cite[Lemma 6.6, Chapter XVIII]{GGK90}, 
it follows that by setting $C=(0,1)$
\begin{equation}\label{eq:corr-kernel}
 \ker \mathcal T^\delta_\sigma=\Set{C\,u| u \in \ker \mathcal A^\delta_\sigma} 
\textrm{ and } 
\dim\ker\mathcal T^\delta_\sigma = 
 \dim\ker \mathcal A^\delta_\sigma.
\end{equation}
In order to conclude the proof, it is enough to show that 
\[
\spfl(\mathcal T^\delta_\sigma; \sigma\in [0,\sigma_0]) =\spfl(\mathcal 
A^\delta_\sigma; \sigma \in  [0,\sigma_0]).
\]
This can be achieved, for instance,  by showing that they have the same crossing 
instants and  isomorphic crossing forms. As direct consequence of 
Equation  \eqref{eq:corr-kernel}  the crossing instants as well as the 
multiplicity of the crossing form coincides. For, let us consider 
the quadratic Lagrangian density function defined  by 
\begin{equation}\label{eq:lagra-density}
 \widetilde L_{\sigma,\delta}(\tau, u,v)= \dfrac12\Mprod{\begin{bmatrix}
P(\tau)& Q(\tau)\\
\trasp{Q}(\tau) & \widetilde R_{\sigma,\delta}(\tau)
 \end{bmatrix}\begin{bmatrix}
 v\\
 u
 \end{bmatrix}}{\begin{bmatrix}
 v\\
 u
 \end{bmatrix}}
\end{equation}
for $ \widetilde R_{\sigma,\delta}(\tau)\=  
\widetilde R_{\sigma}(\tau)+ \delta \Id$.
By taking into account the Legendre transform and invoking  Lemma \ref{lem:nuovo}, we obtain that 
$u_\sigma \in \ker \mathcal T_\sigma$ if and only if 
$z_\sigma= \left(\partial_v \widetilde L_\sigma(\tau, u_\sigma, u_\sigma'), u_\sigma\right) 
\in \ker \mathcal A_\sigma$. 
The associated Hamiltonian function is given by 
\begin{equation}\label{eq:Ham-flia}
 H_{\sigma,\delta}(p,q)= \Mprod{p}{q'}- \widetilde L_{\sigma,\delta}(\tau, q, q')= 
  \dfrac12\Mprod{B_\sigma(\tau)\begin{bmatrix}
 p\\q
 \end{bmatrix} }{\begin{bmatrix}
p\\q
 \end{bmatrix}}.
\end{equation}
By a direct  computation we obtain 
\begin{multline}
 \Gamma(\mathcal A^\delta, \sigma)(u)= \left\langle \dfrac{\partial  
 \mathcal A^\delta_\sigma}{\partial\sigma}\, u, u 
\right\rangle_{L^2}= - \int_0^{+\infty}\Mprod{\dot B_{\sigma,\delta}(\tau)u}{u}\, d\tau
 \quad \forall \, 
u\in \ker \widetilde{\mathcal A}_\sigma\\
 \Gamma(\mathcal T^\delta, \sigma)(v)= \left\langle 
 \dfrac{\partial\mathcal T^\delta_\sigma}{\partial \sigma}\,v, 
v \right\rangle_{L^2}
 = \int_0^{+\infty}  L^\delta_\sigma(\tau, v,v')\, d\tau,  \qquad \forall\,v\in 
\ker \mathcal T^\delta_\sigma.
\end{multline}
By taking into account Equation \eqref{eq:Ham-flia} it follows that 
\begin{multline}
 \Gamma(\mathcal A^\delta, \sigma)(u)=- \int_0^{+\infty}\Mprod{\dot B_{\sigma,\delta}(\tau)u}{u}= 
 -  \int_0^{+\infty} L^\delta_\sigma(\tau, v,v')\, d\tau\\= 
 \Gamma(\mathcal T^\delta, \sigma)(v) \quad \forall \, 
u\in \ker \mathcal A^\delta_\sigma, \ \ \ \forall\,v \in 
\ker \mathcal T^\delta_\sigma.
\end{multline}
The thesis follows by invoking Definition \ref{def:new-spectralflow-def} and arguing as in the 
last part of the proof of Proposition \ref{thm:L=T}. This conclude the proof. 
\end{proof}

\begin{proof}[Proof of Theorem \ref{thm:indextheorem}]
By taking into account Corollary \ref{cor:tuttoinsieme} we know that the spectral index $\ispec(y)$ 
of the $s_0$-a.s. is finite and by Lemma \ref{lem:utile}, we know  that 
\begin{equation}\label{eq:1}
\ispec(y)= \spfl(\mathcal Q_\sigma;\sigma \in [0,\sigma_0]). 
\end{equation}
By Proposition \ref{thm:L=T} and Proposition \ref{thm:T=A}, we deduce that   
\begin{equation}\label{eq:2}
\spfl(\mathcal Q_\sigma;\sigma \in [0,\sigma_0])= \spfl(\mathcal T_\sigma;\sigma \in [0,\sigma_0])=
\spfl(\mathcal A_\sigma;\sigma \in [0,\sigma_0]). 
\end{equation}
By Equation \eqref{eq:2}, we infer that 
\begin{multline}\label{eq:ultima1}
-\spfl(\mathcal A_\sigma;\sigma \in [0,\sigma_0])=-\spfl(\mathcal T_\sigma;\sigma \in [0,\sigma_0])= 
\irel(\mathcal T_0, \mathcal T_{\sigma_0})
=\iMor(\mathcal T_{\sigma_0})-\iMor(\mathcal T_0) \\ 
=- \iMor(\mathcal T_0)= -\iMor(\mathcal T)=-\ispec(y).
\end{multline}
{By Proposition \ref{thm:key4}, we infer that 
\[
-\igeo(y) = \iCLM( E^s_\sigma(0),L_0;\sigma \in[0,\sigma_0]).
\]
By invoking  \cite[Theorem 1, pag. 8]{HP17} in the future halfclinic case (by setting $w_\lambda =y$, $L_\lambda=L_0$ and finally $\lambda=\sigma$),  we deduce also that 
\begin{equation}\label{eq:ultima2}
-\spfl(\mathcal A_\sigma;\sigma \in [0,\sigma_0])= -\igeo(y).
\end{equation}
By Equation \eqref{eq:ultima1} and Equation \eqref{eq:ultima2}, we get 
\[
-\ispec(y)=-\spfl(\mathcal A_\sigma;\sigma \in [0,\sigma_0])=-\igeo(y).
\]
This conclude the proof.} 
\end{proof}

\appendix
%
\section{Maslov index and Spectral flow}\label{sec:spectral-flow-and-Maslov}

The purpose of this Section is to provide the functional analytic and 
symplectic 
preliminaries behind  {\em spectral flow\/} and the {\em 
Maslov index\/}.  In Subsection \ref{subsec:Maslovindexpath} the main 
properties 
of the intersection number of curves of Lagrangian subspaces with a 
distinguished one are collected and the {\em (relative) Maslov index\/} is 
defined. In  Subsection \ref{subsec:flussospettrale} the stage is set with a 
brief review  of the  {\em  spectral flow\/} for paths of 
bounded Fredholm operators and of Fredholm quadratic forms. 
Our basic references are \cite{Phi96, FPR99,PP05, PPT04,CLM94, GPP04, RS93,MPP05, CH07, MPP07, 
PW15a, PW15b, Wat15}
from which  we borrow some notation and definitions.

\subsection{A recap on the Maslov Index}\label{subsec:Maslovindexpath}

Given a $2n$-dimensional (real) symplectic space $(V,\omega)$, a {\em 
Lagrangian 
subspace\/} of $V$ is an $n$-dimensional subspace $L \subset V$ such that $L = 
L^\omega$ where $L^\omega$ denotes the {\em symplectic orthogonal\/}, i.e. the 
orthogonal 
with respect to the symplectic structure. 
We denote by $ \Lagr= \Lagr(V,\omega)$ the {\em Lagrangian Grassmannian of 
$(V,\omega)$\/}, namely the set of all Lagrangian subspaces of $(V, \omega)$
\[
\Lagr(V,\omega)\=\Set{L \subset V| L= L^{\omega}}.
\]
It is well-known that $\Lagr(V,\omega)$ is a manifold. For each $L_0 \in \Lagr$, 
let 
\[
\Lagr^k(L_0) \= \Set{L \in \Lagr(V,\omega) | \dim\big(L \cap L_0\big) =k } 
\qquad k=0,\dots,n.
\]
Each $\Lagr^k(L_0)$ is a real compact, connected submanifold of codimension 
$k(k+1)/2$. The topological closure 
of $\Lagr^1(L_0)$  is the {\em Maslov cycle\/} that can be also described as 
follows
\[
 \Sigma(L_0)\= \bigcup_{k=1}^n \Lagr^k(L_0)
\]
The top-stratum $\Lagr^1(L_0)$ is co-oriented meaning that it has a 
transverse orientation. To be 
more precise, for each $L \in \Lagr^1(L_0)$, the path of Lagrangian subspaces 
$(-\delta, \delta) \mapsto e^{tJ} L$ cross $\Lagr^1(L_0)$ transversally, and as 
$t$ increases the path point 
to the transverse direction. Thus  the Maslov cycle is two-sidedly embedded in 
$\Lagr(V,\omega)$. Based on the topological properties of the Lagrangian 
Grassmannian manifold, 
it is possible to define a fixed endpoints homotopy invariant called {\em Maslov 
index\/}.

\begin{defn}\label{def:Maslov-index}
Let $L_0 \in \Lagr(V,\omega)$ and let $\ell:[0,1] \to \Lagr(V, \omega)$ be a 
continuous path. We 
define the {\em Maslov index\/} $\iCLM$ as follows:
\[
 \iCLM(L_0, \ell(t); t \in[a,b])\= \left[e^{-\varepsilon J}\, \ell(t): 
\Sigma(L_0)\right]
\]
where the right hand-side denotes the intersection number and $0 < \varepsilon 
<<1$.
\end{defn}
For further reference we refer the interested reader to \cite{CLM94} and references therein. 
\begin{rem}
 It is worth noticing that for $\varepsilon>0$ small enough, the Lagrangian 
subspaces 
 $e^{-\varepsilon J} \ell(a)$ and $e^{-\varepsilon J} \ell(b)$ are off the 
singular cycle. 
\end{rem}
One efficient way to compute the Maslov index, was introduced by authors in 
\cite{RS93} via 
crossing forms. Let $\ell$ be a $\mathscr C^1$-curve of Lagrangian subspaces 
such that 
$\ell(0)= L$ and let $W$ be a fixed Lagrangian subspace transversal to $L$. For 
$v \in L$ and 
small enough $t$, let $w(t) \in W$ be such that $v+w(t) \in \ell(t)$.  Then the 
form 
\[
 Q(v)= \dfrac{d}{dt}\Big\vert_{t=0} \omega \big(v, w(t)\big)
\]
is independent on the choice of $W$. A {\em crossing instant\/} for $\ell$ is an 
instant $t \in [a,b]$ 
such that $\ell(t)$ intersects $W$ nontrivially. At each crossing instant, we 
define the 
crossing form as 
\[
 \Gamma\big(\ell(t), W, t \big)= Q|_{\ell(t)\cap W}.
\]
A crossing is termed {\em regular\/} if the crossing form is non-degenerate. If 
$\ell$ is regular meaning that 
it has only regular crossings, then the Maslov index is equal to 
\[
 \mu\big(W, \ell(t); t \in [a,b]\big) = \coiMor\big(\Gamma(\ell(a), W; a)\big)+ 
\sum_{a<t<b} 
 \sgn\big(\Gamma(\ell(t), W; a\big)- \iMor\big(\Gamma(\ell(b), W; b\big)
\]
where the summation runs over all crossings $t \in (a,b)$ and $\coiMor, \iMor$ 
are the dimensions  of 
the positive and negative spectral spaces, respectively and $\sgn\= 
\coiMor-\iMor$ is the  signature. 
(We refer the interested reader to \cite{LZ00} and \cite[Equation (2.15)]{HS09}). 
We close this section by 
recalling some useful 
properties of the Maslov index. \\
\begin{itemize}
\item[]{\bf Property I (Reparametrization invariance)\/}. Let $\psi:[a,b] \to 
[c,d]$ be a 
continuous and piecewise smooth function with $\psi(a)=c$ abd $\psi(b)=d$, then 
\[
 \iCLM\big(W, \ell(t)\big)= \iCLM(W, \ell(\psi(t))\big). 
\]
\item[] {\bf Property II (Homotopy invariance with respect to the ends)\/}. For 
any $s \in [0,1]$, 
let $s\mapsto \ell(s,\cdot)$ be a continuous family of Lagrangian paths 
parametrised on $[a,b]$ and 
such that $\dim\big(\ell(s,a)\cap W\big)$ and $\dim\big(\ell(s,b)\cap W\big)$ 
are constants, then 
\[
 \iCLM\big(W, \ell(0,t);t \in [a,b]\big)=\iCLM\big(W, \ell(1,t); t \in 
[a,b]\big).
\]
\item[]{\bf Property III (Path additivity)\/}. If $a<c<b$, then
\[
 \iCLM\big(W, \ell(t);t \in [a,b]\big)=\iCLM\big(W, \ell(t); t \in [a,c]\big)+
 \iCLM\big(W, \ell(t); t \in [c,b]\big) 
\]
\item[]{\bf Property IV (Symplectic invariance)\/}. Let $\Phi:[a,b] \to \Sp(2n, 
\R)$. Then 
\[
 \iCLM\big(W, \ell(t);t \in [a,b]\big)= \iCLM\big(\Phi(t)W, \Phi(t)\ell(t); t 
\in [a,b]\big).
\]
\end{itemize}

\subsection{On the Spectral Flow}\label{subsec:flussospettrale}
%
Let $\mathcal W, \mathcal H$ be  real separable Hilbert spaces with a dense 
and 
continuous inclusion $\mathcal W \hookrightarrow \mathcal H$.
\begin{note}
We denote by  
$\mathcal{B}(\mathcal W,\mathcal H)$ the Banach  space of all linear bounded 
operators (if $\mathcal W=\mathcal H$ we use the shorthand notation 
$\mathcal{B}(\mathcal H)$); by $\mathcal{B}^{sa}(\mathcal W, \mathcal H)$ we 
denote the set of all  bounded selfadjoint operators when regarded as operators 
on  $\mathcal H$ and finally $\mathcal{BF}^{sa}(\mathcal W, \mathcal H)$ 
denotes 
the set of all bounded selfadjoint Fredholm operators and we recall that an 
operator $T \in \mathcal{B}^{sa}(\mathcal W, \mathcal H)$ is Fredholm if and 
only if its kernel is finite dimensional and its image is closed. 
\end{note}
For $T \in\mathcal{B}(\mathcal W, \mathcal H)$ we recall that the {\em spectrum 
\/} of $T$ is $\sigma(T)\= \Set{\lambda \in \C| T-\lambda I \text{ is not 
invertible}}$ and that $\sigma(T)$ is decomposed into the {\em essential 
spectrum\/} and the {\em discrete spectrum\/} defined respectively as 
$\sigma_{ess}(T) \= \Set{\lambda \in \C| T-\lambda I \notin 
\mathcal{BF}(\mathcal W, \mathcal H)}$ and $\sigma_{disc}(T)\= \sigma(T) 
\setminus \sigma_{ess}(T)$.
It is worth noting that $\lambda \in \sigma_{disc}(T)$ if and only if it is an 
isolated point in $\sigma(T)$ and $\dim \ker (T - \lambda I)<\infty$.

Let now $T \in \mathcal{BF}^{sa}(\mathcal W,\mathcal H)$, then either $0$ is 
not 
in $\sigma(T)$ or it is in $\sigma_{disc}(T)$ (cf. \cite[Lemma 2.1]{Wat15}), 
and, as a consequence of the Spectral Decomposition Theorem (cf. \cite[Theorem 
6.17, Chapter 
III]{Kat80}), the following orthogonal decomposition holds
\[
 \mathcal W = E_-(T) \oplus \ker T \oplus E_+(T),
\]
with the property
\[
 \sigma(T) \cap(-\infty, 0)= \sigma\left(T_{E_-(T)}\right) \textrm{ and } 
 \sigma(T) \cap(0,+\infty)= \sigma\left(T_{E_+(T)}\right).
\]
\noindent
\begin{defn}\label{def:Morseindex}
Let $T \in \mathcal{BF}^{sa}(\mathcal W,\mathcal H)$. If $\dim E_-(T)<\infty$ 
(resp.  $\dim 
E_+(T)<\infty$), 
we define its {\em Morse index\/} (resp. {\em Morse co-index\/})
as the integer denoted by $\iMor(T)$  (resp. $\coiMor(T)$) and defined as:
\[
 \iMor(T) \= \dim E_-(T)\qquad \big(\textrm{resp. } \coiMor(T)\= \dim E_+(T)\big).
\]
\end{defn}
\smallskip
The space  $\mathcal{BF}^{sa}(\mathcal H)$ was intensively 
investigated by Atiyah and Singer in \cite{AS69} \footnote{%
Actually, in this reference, only skew-adjoint Fredholm operators were 
considered, but the case of bounded selfadjoint Fredholm operators presents no 
differences.} and the following important topological characterisation can be 
deduced.
\begin{prop}\label{thm:as69} (Atiyah-Singer, \cite{AS69})
The space $\mathcal{BF}^{sa}(\mathcal H)$ consists of three connected 
components:
\begin{itemize}
\item the {\em essentially positive\/} $
 \mathcal{BF}^{sa}_+(\mathcal H)\=\Set{T \in\mathcal{BF}^{sa}(\mathcal 
H)|\sigma_{ess}(T) 
  \subset (0,+\infty)}$; 
 \item the  {\em essentially negative\/} 
  $\mathcal{BF}^{sa}_-(\mathcal H)\=\Set{T \in\mathcal{BF}^{sa}(\mathcal 
H)|\sigma_{ess}(T) 
  \subset (-\infty,0)}$;
  \item the {\em strongly indefinite\/}
$ \mathcal{BF}^{sa}_*(\mathcal H)\=\mathcal{BF}^{sa}(\mathcal 
H)\setminus(\mathcal{BF}^{sa}_+(\mathcal H)
  \cup \mathcal{BF}^{sa}_-(\mathcal H)). $
  \end{itemize}
The spaces $\mathcal{BF}_+^{sa}(\mathcal H),\mathcal{BF}_-^{sa}(\mathcal H)$ 
are  contractible (actually convex), whereas $ \mathcal{BF}^{sa}_*(\mathcal 
H)$ is topological non-trivial; more precisely, $\pi_1(\mathcal{BF}^{sa}_*(\mathcal 
H))\simeq\Z.$
\end{prop}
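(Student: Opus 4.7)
The plan is to establish three claims in sequence: (i) the three subsets are pairwise disjoint and cover $\mathcal{BF}^{sa}(\mathcal H)$; (ii) $\mathcal{BF}^{sa}_+(\mathcal H)$ and $\mathcal{BF}^{sa}_-(\mathcal H)$ are convex, hence contractible; (iii) $\pi_1(\mathcal{BF}^{sa}_*(\mathcal H))\cong\Z$.

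For (i), I would use the key structural fact (established above via the Spectral Decomposition Theorem) that a selfadjoint Fredholm operator cannot have $0$ in its essential spectrum. Since $\sigma_{ess}(T)\subset\R\setminus\{0\}$ is closed and $T$ is selfadjoint, its essential spectrum splits into at most two clopen pieces according to sign, and the three cases (entirely positive, entirely negative, or straddling) are exhaustive and mutually exclusive. This immediately shows the three sets partition $\mathcal{BF}^{sa}(\mathcal H)$.

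For (ii), I would exhibit an explicit deformation retract onto $\{I\}$ for $\mathcal{BF}^{sa}_+(\mathcal H)$, namely the straight-line homotopy $\gamma(s)\=(1-s)T+sI$ for $s\in[0,1]$. Selfadjointness is manifest; to verify that $\gamma(s)\in\mathcal{BF}^{sa}_+(\mathcal H)$ for every $s$, one notes that by functional calculus $\sigma_{ess}(\gamma(s))=(1-s)\sigma_{ess}(T)+s$, which stays inside $(0,+\infty)$ under the hypothesis $\sigma_{ess}(T)\subset(0,+\infty)$. In particular $0\notin\sigma_{ess}(\gamma(s))$, so $\gamma(s)$ remains Fredholm throughout. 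An identical argument with $-I$ in place of $I$ retracts $\mathcal{BF}^{sa}_-(\mathcal H)$.

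The hard step is (iii), the identification of the fundamental group of the strongly indefinite component. My approach would be via the Cayley transform
\[
c(T)\=(T-iI)(T+iI)^{-1},
\]
which sends $\mathcal{BF}^{sa}(\mathcal H)$ homeomorphically onto the set of unitaries $U$ such that $U+I$ is Fredholm, with the relevant spectral structure at $-1$ controlling the image of each component. Under $c$, an essentially $\pm$-definite operator maps to a unitary whose essential spectrum is concentrated near $+1$, and such unitaries form a contractible set by a Kuiper-type argument (for the full unitary group $\mathcal{U}(\mathcal H)$ is contractible by Kuiper's theorem). By contrast, the image of $\mathcal{BF}^{sa}_*(\mathcal H)$ consists of unitaries whose essential spectrum meets both arcs of the unit circle separated by $-1$; this image can be shown to be homotopy equivalent to the restricted unitary group $U_{\mathrm{res}}$ (or equivalently to $U(\infty)$), whose $\pi_1$ equals $\Z$ by Bott periodicity. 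The main obstacle will be making the Cayley transform argument rigorous and identifying precisely the classifying-space nature of $\mathcal{BF}^{sa}_*(\mathcal H)$; for this I would rely on the original Atiyah--Singer construction in \cite{AS69} (and its careful presentations in the spectral flow literature, e.g.\ \cite{Phi96, FPR99}), since building the $K^{-1}$-classifying-space structure from scratch would take us well beyond the scope of an appendix.
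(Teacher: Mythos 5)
The paper gives no proof of Proposition \ref{thm:as69}: it is stated as a result cited from Atiyah--Singer \cite{AS69}, with a footnote noting that the original reference treats skew-adjoint operators and that the self-adjoint case is no different. Your plan is therefore more elaborate than anything the paper contains, and its hardest step, the computation of $\pi_1(\mathcal{BF}^{sa}_*(\mathcal H))$, ultimately defers to the same reference, which is a defensible match with the paper's level of detail.

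A few remarks on your intermediate steps are in order. In step (ii), the straight-line homotopy $\gamma(s)=(1-s)T+sI$, together with the spectral-mapping identity $\sigma_{ess}(\gamma(s))=(1-s)\sigma_{ess}(T)+s$, proves that $\mathcal{BF}^{sa}_+(\mathcal H)$ is star-shaped about $I$ and hence contractible; it does not prove convexity, which the proposition also asserts. Convexity follows cleanly from the equivalent characterisation that $T\in\mathcal{BF}^{sa}_+(\mathcal H)$ if and only if there exist $c>0$ and a finite-dimensional subspace $V\subset\mathcal H$ with $\langle Tx,x\rangle\geq c\|x\|^2$ for all $x\perp V$: this property is preserved under convex combinations by taking the minimum of the constants and the sum of the finite-dimensional exceptional subspaces. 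In step (i), you establish that the three sets partition $\mathcal{BF}^{sa}(\mathcal H)$, but to conclude that they are its connected components you must also observe that each set is open; this follows from upper semicontinuity of the spectrum in the Calkin algebra, and since it is the only place where the topology on $\mathcal{BF}^{sa}(\mathcal H)$ enters at all it should be stated. Finally, in step (iii) the M\"obius map $x\mapsto (x-i)/(x+i)$ carries $(0,+\infty)$ onto the whole open lower semicircle from $-1$ to $+1$, not onto a neighbourhood of $+1$; the essential spectrum of $c(T)$ for $T$ essentially positive is merely compactly contained in that arc, i.e.\ bounded away from $-1$, which is the property that actually drives the Kuiper-type contraction. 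With that correction, the plan of identifying $\mathcal{BF}^{sa}_*(\mathcal H)$ with a classifying space for $K^{-1}$ via \cite{AS69} is the standard and correct route.
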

\begin{rem}
By the definitions of the connected components of $\mathcal{BF}^{sa}$, we 
deduce 
that a bounded linear  operator is essentially positive if and only if it is a symmetric 
compact perturbation of a (bounded) positive definite selfadjoint operator. 
Analogous observation hold for essentially negative operators. 
\end{rem}
Even if in the strongly indefinite case neither a Morse index nor a Morse 
co-index can be defined another topological invariant naturally arise. 
\begin{lem}\label{thm:calkinequiv}
Let  $S,T \in \mathcal{BF}^{sa}(\mathcal W,\mathcal H)$ be two selfadjoint 
isomorphisms such that $S-T$ is a compact (selfadjoint) operator. Then, we have:
\begin{equation}\label{eq:calkinequivalent}
\dim\big(E_-(S)\cap E_+(T)\big)<\infty \textrm { and } \dim\big(E_+(S)\cap  
E_-(T)\big)<\infty.
\end{equation}
\end{lem}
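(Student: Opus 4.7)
The plan is to extract a definite sign for the quadratic form of $T-S$ on the intersection $V\=E_-(S)\cap E_+(T)$, exploiting the spectral gap at $0$ that both $S$ and $T$ enjoy by virtue of their invertibility, and then to rule out $\dim V=\infty$ by invoking the compactness of $S-T$.

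First I would observe that since $S \in \mathcal{BF}^{sa}(\mathcal W,\mathcal H)$ is a selfadjoint isomorphism, $0$ lies off its (closed, real) spectrum, so by the spectral theorem there exists a constant $c_S>0$ such that
\[
\langle Sv,v\rangle \leq -c_S\,\norm{v}^2 \qquad\forall\, v \in E_-(S),
\]
the norm being taken in $\mathcal H$. The same reasoning applied to $T$ produces $c_T>0$ with $\langle Tv,v\rangle \geq c_T\,\norm{v}^2$ for every $v \in E_+(T)$. Subtracting these two inequalities yields, for every $v$ in the closed intersection $V\=E_-(S)\cap E_+(T)$, the coercive estimate
\[
\langle (T-S)v,v\rangle \geq (c_S+c_T)\,\norm{v}^2.
\]

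Next I would argue by contradiction: assume $\dim V=\infty$ and extract a sequence $\{e_n\}\subset V$ orthonormal in $\mathcal H$. Such a sequence converges weakly to $0$ in $\mathcal H$, so the compactness of $K\=T-S$ forces $Ke_n \to 0$ in norm and hence $\langle Ke_n,e_n\rangle\to 0$, in direct contradiction with the uniform bound $\langle Ke_n,e_n\rangle \geq c_S+c_T>0$ obtained above. Therefore $\dim V<\infty$. Should one wish to bypass the distinction between $\mathcal W$ and $\mathcal H$ when applying compactness, it suffices to transfer the argument to the bounded selfadjoint operators $S^{-1},T^{-1}\in \mathcal B(\mathcal H)$, which share the same spectral subspaces as $S$ and $T$ (since inversion preserves the sign of the spectrum) and whose difference $T^{-1}-S^{-1}=-T^{-1}(T-S)S^{-1}$ is compact on $\mathcal H$.

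The second assertion is proved by the completely symmetric argument: swapping the roles of $(S,T)$ yields the coercivity $\langle (S-T)v,v\rangle \geq (c_S+c_T)\,\norm{v}^2$ on $E_+(S)\cap E_-(T)$, and the same compactness trick applied to $-K$ gives finite dimensionality. The main (indeed only) non-routine point is the passage from invertibility of $S$ and $T$ to a uniform coercivity on the respective spectral subspaces, which however is an immediate consequence of the gap of the spectrum around $0$; the remainder is a standard exercise on compact selfadjoint operators.
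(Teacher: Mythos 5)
Your argument rests on the same two pillars as the paper's proof: the spectral gap at $0$ (a consequence of $S$ and $T$ being isomorphisms) makes $\langle(T-S)v,v\rangle$ sign-definite on $E_-(S)\cap E_+(T)$, and compactness of $S-T$ rules out such definiteness on an infinite-dimensional space. The paper packages the second step abstractly --- it sets $\mathcal W_0=E_-(S)\cap E_+(T)$, forms $P(S-T)|_{\mathcal W_0}$ with $P$ the orthogonal projector, and invokes ``compact $+$ invertible $\Rightarrow$ finite-dimensional'' --- whereas you extract an orthonormal sequence and use that compact operators convert weak to norm convergence. Both routes are sound, and your explicit coercivity estimate $\langle(T-S)v,v\rangle\geq(c_S+c_T)\|v\|^2$ is in fact more detail than the paper supplies (it merely asserts that $P(T-S)|_{\mathcal W_0}$ is ``selfadjoint and invertible'' with no justification), so you are not skipping anything the paper proves.

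One caveat concerning your proposed workaround for the $\mathcal W\neq\mathcal H$ distinction. You are right that $S^{-1},T^{-1}\in\mathcal B(\mathcal H)$ share the same sign-spectral subspaces as $S,T$ and that $T^{-1}-S^{-1}=-T^{-1}(T-S)S^{-1}$ is compact on $\mathcal H$. But inversion preserves the \emph{sign} of the spectrum, not the \emph{gap}: when $S$ is genuinely unbounded on $\mathcal H$, the spectrum of $S^{-1}$ accumulates at $0$ and the uniform estimate $\langle S^{-1}v,v\rangle\leq-\tilde c_S\|v\|^2$ on $E_-(S^{-1})$ fails. You then only get $\langle(T^{-1}-S^{-1})e_n,e_n\rangle>0$ together with $\langle(T^{-1}-S^{-1})e_n,e_n\rangle\to0$, which is not a contradiction. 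So the transfer, as stated, does not close the gap that it was introduced to close. That said, the paper's own proof glosses over the same $\mathcal W$-versus-$\mathcal H$ bookkeeping (its operator $P(S-T)|_{\mathcal W_0}$ is compact from the $\mathcal W$-topology but bounded below only in the $\mathcal H$-norm), and in the applications actually made of this lemma the relevant operators act on a single Hilbert space, in which case your original argument applies cleanly without any workaround.
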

\begin{proof}
Let us set $\mathcal W_0\=E_-(S)\cap E_+(T)$. We start proving that   $\dim 
\mathcal W_0 < 
\infty$ being the proof of $\dim\big(E_+(S)\cap E_-(T)\big)<\infty$ analogous. 
For, let $P: \mathcal H\to\mathcal W_0$ be the orthogonal projector. We now 
define the operator $ P(S-T)|_{\mathcal W_0}: \mathcal W_0 \to \mathcal W_0$; 
we 
conclude showing that this operator is compact and invertible. We observe that 
it is a compact operator, since $S-T$ is a compact operator, $P$ is continuous 
and the set of compact operators is an ideal of $\mathcal B(\mathcal W, 
\mathcal 
H)$. Furthermore $P(T-S)|_{\mathcal W_0}$ is selfadjoint and invertible.
\end{proof}
\begin{defn}\label{def:relativeMorseindex}
We define the {\em relative Morse index\/} of an ordered pair of selfadjoint 
isomorphisms $S, T \in \mathcal{BF}^{sa}(\mathcal W,\mathcal H)$ such that $S-T$ 
is compact, as the integer 
\[
 \irel(S,T)\= \dim\big(E_+(S)\cap E_-(T)\big)-\dim\big(E_-(S)\cap E_+(T)\big).
\]
\end{defn}
(For further details, we refer the interested reader to \cite{ZL99} and \cite[Definition 2.1]{HS09} 
and references therein).
Moreover if $T$ is 
positive definite, then $S$ is a compact perturbation of a positive definite 
operator, so, it is essentially positive and 
\[
\irel(S,T)=-\iMor(S).
\]
We are now in position to introduce the spectral flow. 
Given a  $\mathcal C^1$-path  $L:[a,b]\to\mathcal{BF}^{sa}(\mathcal W, \mathcal 
H)$, the spectral flow of $L$ counts the net number of eigenvalues crossing 0. 
\begin{defn}\label{def:crossing}
An instant $t_0 \in (a,b)$ is called a \emph{crossing instant} (or {\em 
crossing\/} for short) if $\ker  L_{t_0} \neq \{0\}$. The \emph{crossing form} 
at a crossing $t_0$ is the quadratic form defined by 
\[
 \Gamma( L, t_0): \ker  L_{t_0} \to \R, \ \ \Gamma( L, 
t_0)[u] \=\langle 
 \dot{ L}_{t_0} u, u \rangle_{\mathcal H},
\]
where we denoted by $\dot{L}_{t_0}$ the derivative of $L$ 
with respect to the parameter $t \in [a,b]$ at the point $t_0$.
A crossing is called \emph{regular}, if $\Gamma( L, t_0)$ is 
non-degenerate. If $t_0$ is a crossing instant for $L$, we refer to 
$m(t_0)$ the dimension of $\ker  L_{t_0}$.
\end{defn}
In the case of regular curve we introduce the following Definition. 
\begin{defn}\label{def:new-spectralflow-def}
 Let  $L:[a,b]\to\mathcal{BF}^{sa}(\mathcal W, \mathcal  H)$ be a $\mathscr 
C^1$-path and 
 we assume that it has only regular crossings. Then 
 \[
\spfl(L; [a,b])= \sum_{t \in (a,b)} \sgn \Gamma(L, t)- 
\iMor\big(\Gamma(L,a)\big)
+ \coiMor\big((\Gamma(L,b)\big). 
\]
\end{defn}
\begin{rem}
 Usually the Definition of the spectral flow in literature is given for 
continuous curve of selfadjoint 
 Fredholm operators and without any crossing forms. We refer the reader to 
\cite{Phi96,RS95, Wat15} and 
 references therein. Actually Definition \ref{def:new-spectralflow-def} 
represents an efficient way for 
 computing the spectral flow; in fact it requires more regularity as well as a 
transversality assumption 
 (the regularity of each crossing instant). We observe that a $\mathscr C^1$-path 
 always exists  in the homotopy class of the original path.  
\end{rem}
\begin{rem}
 It is worth noting, as already observed by author in \cite{Wat15}, that the spectral flow can be 
 defined in the more general case of continuous 
 paths of closed unbounded selfadjoint Fredholm operators that are 
 continuous with respect to the (metric) gap-topology (cf. \cite{BLP05} and references 
 therein). However in the special case in 
 which the domain of the operators is fixed, then the closed path of unbounded 
 selfadjoint Fredholm operators can be regarded as a continuous path 
 in $\mathcal{BF}^{sa}(\mathcal W, \mathcal  H)$. Moreover  this path is also continuous 
 with respect to the aforementioned gap-metric topology.
 
 The advantage to regard the paths in  $\mathcal{BF}^{sa}(\mathcal W, \mathcal  H)$ is that the 
 theory is straightforward as in the bounded case and, clearly, it is sufficient for the applications  
 studied in the present manuscript. 
\end{rem}

Following authors in \cite{MPP05} we are in position to discuss the spectral 
flow for bounded Fredholm quadratic forms.  
Let us denote by $\mathcal Q(\mathcal H)$  the set of all bounded quadratic 
forms on $\mathcal H$. By the Riesz-Fréchét Representation Theorem, for every 
$q 
\in \mathcal Q(\mathcal H)$  there exists a unique selfadjoint operator $L_q 
\in 
\mathcal B(\mathcal H)$ called the \emph{representation of $q$ with 
respect to $\langle \cdot, \cdot \rangle_\mathcal H$} such that 
\[
 q(u)=\langle L_q\, u, u \rangle, \qquad \forall u \in \mathcal H.
\]

We call $q:\mathcal H\to \R$ a \emph{Fredholm quadratic form\/}, and we write  
$q \in \mathcal{Q_F}(\mathcal H)$, if $L_q$ is Fredholm. 
The set $\mathcal{Q_F}(\mathcal H)$ is an open subset of $\mathcal Q(\mathcal 
H)$ which is stable under perturbations by weakly continuous quadratic forms 
(cf. \cite[Appendix B]{BJP14}).
A quadratic form $q\in \mathcal{Q_F}(\mathcal H)$ is called {\em 
non-degenerate\/} if the corresponding Riesz representation $L_q$ is 
invertible. 
In view of the special role played by compact operators with respect to the 
Fredholm theory we recall that a quadratic form on $\mathcal H$ is weakly 
continuous if and only if one (and hence any equivalent) of its 
representations with respect to the Hilbert space structure of $\mathcal H$  is 
a compact operator on  $\mathcal H$ (cf. \cite[Appendix B]{BJP14}).

A Fredholm quadratic form $q$ is termed \emph{essentially positive}, and we 
write $q\in \mathcal{Q_F}^+(\mathcal H)$, provided it is a weakly continuous 
perturbation of a positive definite Fredholm form. It turns out that $q\in 
\mathcal{Q_F}^+(\mathcal H)$ if and only if its representation  with respect to 
one (and hence any equivalent) Hilbert structure on $\mathcal H$ is essentially 
positive  selfadjoint Fredholm operator; the set $\mathcal{Q_F}^+(\mathcal H)$ 
si contractible (recall Proposition \ref {thm:as69}). 
\begin{defn}\label{def:sfquadratic}
Let $q:[a,b]\rightarrow \mathcal {Q_F}(\mathcal H)$ be a continuous path. 
We define  the \emph{spectral flow of $q$}  as the  spectral 
flow associated to its path of representation
\[
\spfl(q;[a,b]) \= \spfl(L_q;[a,b]).
\]
\end{defn}
By virtue of Definition \ref{def:relativeMorseindex} if $q:[a,b] \to 
\mathcal{Q_F}^+(\mathcal H)$ it turns out that
\begin{equation} \label{eq:sf-diffMorse}
\spfl(q;[a,b]) = \iMor(q_a)-\iMor(q_b).
\end{equation}
(cf., for  instance, \cite[Definition 2.8]{ZL99} and \cite{FPR99} and references 
therein for further details).


\vspace{0.4cm}
	\noindent
	\textsc{Vivina L.~Barutello}\\
	Dipartimento di Matematica \lq\lq G.~Peano\rq\rq\\
	Università degli Studi di Torino\\
	Via Carlo Alberto, 10\\
	10123 Torino, Italy\\
	E-mail: \email{vivina.barutello@unito.it}   
   
\vspace{0.4cm}
\noindent
\textsc{Xijun Hu}\\
Department of Mathematics\\
Shandong University\\
Jinan, Shandong, 250100\\
The People's Republic of China\\
China\\
E-mail: \email{xjhu@sdu.edu.cn}

\vspace{0.4cm}
\noindent
\textsc{Alessandro Portaluri}\\
DISAFA\\
Università degli Studi di Torino\\
Largo Paolo Braccini 2 \\
10095 Grugliasco, Torino, Italy\\
Website: \url{aportaluri.wordpress.com}\\
E-mail: \email{alessandro.portaluri@unito.it}

\vspace{0.4cm}
\noindent
\textsc{Susanna Terracini}\\
Dipartimento di Matematica ``G. Peano''\\
Università degli Studi di Torino\\
Via Carlo Alberto, 10\\
10123 Torino, Italy\\
Website: \url{https://sites.google.com/site/susannaterracini/home}\\
E-mail: \email{susanna.terracini@unito.it}

\vspace{0.4cm}
\noindent
COMPAT-ERC Website: \url{https://compaterc.wordpress.com/}\\
COMPAT-ERC Webmaster \& Webdesigner: Arch.  Annalisa Piccolo
\end{document}